\theoremstyle{plain} 
\newtheorem{thm}{Theorem}[section]
\newtheorem{prop}[thm]{Proposition}
\newtheorem{lemma}[thm]{Lemma} 
\newtheorem{assumption}[thm]{Assumption}
\theoremstyle{definition} 
\newtheorem{df}[thm]{Definition}
\theoremstyle{remark} 
\newtheorem{rmk}[thm]{Remark}
\renewcommand{\d}{\mathrm{d}}
\newcommand{\Div}{\mathrm{div}}
\newcommand{\R}{\mathbb{R}}
\newcommand{\ep}{\varepsilon}
\newcommand{\tp}{\tilde{p}}
\newcommand{\teta}{\tilde \eta}
\newcommand{\tq}{\tilde q}
\newcommand{\tgamma}{\widetilde \gamma}
\newcommand{\brho}{\bar \rho}
\newcommand{\bm}{\bar m}
\newcommand{\bu}{\bar{u}}
\newcommand{\ER}{\mathrm{Error}}
\title{Soft congestion approximation\\ to the one-dimensional constrained Euler equations}
\author{Roberta Bianchini\footnote{Sorbonne-Universit\'e, CNRS, Universit\'e de Paris, Laboratoire Jacques-Louis Lions (LJLL), F-75005 Paris, France \& Consiglio Nazionale delle Ricerche, IAC, via dei Taurini 19, I-00185 Rome, Italy; r.bianchini@iac.cnr.it} \ and Charlotte Perrin\footnote{Aix Marseille Univ, CNRS, Centrale Marseille, I2M, Marseille, France; charlotte.perrin@univ-amu.fr}}
\begin{document}
	
	\maketitle
	
	\begin{small}
		\begin{center}
			{\bf Abstract}
		\end{center}
		
		This article is concerned with the analysis of the one-dimensional compressible Euler equations with a singular pressure law, the so-called \emph{hard sphere equation of state}. The  result is twofold.
		First, we establish the existence of bounded weak solutions
		by means of a viscous regularization and refined compensated compactness arguments.
		Second, we investigate the smooth setting by providing a detailed description of the impact of the singular pressure on the breakdown of the solutions.
		In this smooth framework, we rigorously justify the singular limit towards the free-congested Euler equations, where the compressible (free) dynamics is coupled with the incompressible one in the constrained (i.e. congested) domain.

		\bigskip
		\noindent{\bf Keywords:} Compressible Euler equations, maximal packing constraint, singularity formation, singular limit, free boundary problem, compensated compactness.
		
		\medskip
		\noindent{\bf MSC:} 35Q35, 35L87, 35L81.
	\end{small}


\section{Introduction and main results}
The topic of this work is the analysis of the following one-dimensional compressible Euler equations
\begin{subnumcases}{\label{eq:sing-Euler}}
 \partial_t \rho +\partial_x m =0, \label{eq:mass-sing-Euler} \\
\partial_t m + \partial_x \left( \dfrac{m^2}{\rho}\right) + \partial_x p_\ep(\rho)=0, \label{eq:moment-sing-Euler}
\end{subnumcases}
where $\rho$ stands for the density and $m = \rho u$ for the momentum of the fluid, 
with $u$ the velocity of the fluid.
The originality of the model that we shall consider in this paper lies in the choice the pressure law $p_\ep$, which is supposed to satisfy the so-called \emph{hard-sphere equation of state} introduced by Carnahan and Starling in~\cite{carnahan1969}.  The latter is identified by means of the following conditions at $\ep>0$ fixed:
\begin{equation}\label{eq:description-pressure}
p_\ep \in \mathcal{C}^1([0,1)), \quad p_\ep(0) = 0, \quad p'_\ep(\rho)> 0 ~~\text{on}~ (0,1), \quad \lim_{\rho \to 1^-}p_\ep(\rho) = + \infty,
\end{equation}
where the physical meaning of the parameter $\ep>0$ is discussed below.
The class of equations in \eqref{eq:sing-Euler}-\eqref{eq:description-pressure} gained the interest of the mathematical community for the modeling of collective motions (see for instance \cite{maury2011} and \cite{degond2011}) and of dispersed mixtures like bubbly fluids or granular suspensions (see for instance \cite{harris1994}, \cite{feireisl2018}, \cite{ozenda2019}).
In the collective motion models, $\rho$ in \eqref{eq:sing-Euler} is the density of the crowd, while the pressure $p_\ep(\rho)$ is the cumulative response of short-range repulsive social forces preventing contacts between individuals. 
From the macroscopic viewpoint, the singularity of the pressure plays the role of a barrier, formally preventing the creation of congested regions where $\rho=1$.
The parameter $\ep$ models the strength of the repulsive forces. 

In the rest of the paper, the expression of the pressure term is explicitly chosen as follows
\begin{equation}\label{eq:sing-press-Euler}
p_\ep(\rho) = \ep \left(\dfrac{\rho}{1-\rho} \right)^\gamma + \kappa \rho^{\tgamma}
= p_{1,\ep}(\rho) + p_2(\rho),
\end{equation}
where $\ep>0$ is small and fixed, while the specific ranges of $\gamma, \tgamma>1$ and $\kappa\ge 0$ will be discussed later on. 
The pressure is thus split into two parts: the first one $p_{1,\ep}$ dictates the singular behavior close to the maximal density constraint, while $p_2$ is the classical equation of state for isentropic gases and models additional non-singular effects.
For instance, shallow water flows can be described by system \eqref{eq:sing-Euler} (the so-called \emph{shallow water} or \emph{Saint-Venant equations}), where the variable $\rho$ is replaced by the height of the flow $h$, and $p_2$ is the hydrostatic part of the pressure due to gravity, namely $p_2(h)= gh^2/2$.\\
A heuristic reasoning shows that the solutions $(\rho_\ep,m_\ep)$ to system \eqref{eq:sing-Euler} coupled with the equation of state \eqref{eq:sing-press-Euler} converge as $\ep\rightarrow 0$ towards the solutions $(\rho,m)$ to the \emph{free-congested Euler equations}
\begin{subnumcases}{\label{eq:lim-Euler}}
\partial_t \rho + \partial_x m = 0, \\
\partial_t m + \partial_x \left( \frac{m^2}{\rho} \right) + \partial_x p + \kappa \partial_x \rho^{\tgamma} = 0, \\
0 \leq \rho \leq 1, ~(1-\rho)p = 0, ~ p \geq 0, \label{eq:lim-Euler-excl}
\end{subnumcases}
where the pressure $p$ is the limit (in a sense that will be clarified later on) of $p_{1,\ep}(\rho_\ep)$.
The above system is a hybrid model describing both regions where the density is ``free'', in the sense that $\rho<1$ and $p=0$, and constrained regions where the density is saturated $\rho=1$ and $p$ activates itself.
From the mathematical viewpoint, the pressure $p$ can be seen as a Lagrange multiplier associated to the incompressibility constraint
\begin{equation}\label{eq:divu}
\partial_x u = 0 \quad \text{in} \quad \{\rho =1\}.
\end{equation}
Following the terminology introduced by Maury in \cite{maury2011}, compressible systems with singular constitutive laws like \eqref{eq:sing-Euler}-\eqref{eq:description-pressure} are called \emph{soft congestion models}, whereas free-congested systems of type \eqref{eq:lim-Euler} are called \emph{hard congestion models}.
It is worth pointing out that, unlike the standard formulation of free-boundary problems
, in \eqref{eq:lim-Euler} there is no explicit equation for the evolution of the interface between the free domain and the congested one, which is indeed implicitly encoded in the exclusion relation \eqref{eq:lim-Euler-excl}.\\
The limit system \eqref{eq:lim-Euler}, with $\kappa=0$, has been heuristically introduced  by Brenier et al. in \cite{brenier2000} as an asymptotic model for two-phase (gas-liquid or solid-liquid) flows when the ratio between the characteristic densities of the two phases is very small (or conversely very large).
The existence of global weak solutions to system \eqref{eq:lim-Euler} with $\kappa=0$ has been established by Berthelin in \cite{berthelin2002} (see also \cite{perrin2018} for a closely related model and \cite{berthelin2017} for an extension to the multi-D case) and numerical approaches based on optimal transport are proposed in~\cite{preux2016}. \\
To our knowledge, the rigorous proof of the convergence of solutions to \eqref{eq:sing-Euler}-\eqref{eq:sing-press-Euler} towards solutions of \eqref{eq:lim-Euler} is a largely open question, previous studies in the literature dealing only with the formal link between the two models.
For instance, Degond et al. in~\cite{degond2011} take advantage of this formal limit to provide a new numerical scheme for the free boundary problem \eqref{eq:lim-Euler}.
Interestingly, the analysis of the asymptotic behavior as $\ep \to 0$ of the solutions of the Riemann problem associated to \eqref{eq:sing-Euler} is also carried out in \cite{degond2011}. 
In~\cite{bresch2017} Bresch and Renardy analyse the shock formation at the interface between the congested region where $\rho=1$ and the free region where $\rho < 1$.
In that case, the heuristic connection between \eqref{eq:sing-Euler} and \eqref{eq:lim-Euler} plays again a crucial role in identifying numerically the formation of these shocks when the congestion constraint is reached.
As a matter of facts, the asymptotic limit for $\ep \to 0$ is better understood in the viscous case, that is the Navier-Stokes equations, where the viscosity term $ -\nu \partial_{xx} u$ is added to the momentum equation \eqref{eq:moment-sing-Euler}.
The interested reader is referred to \cite{peza2015}, where the behavior as $\ep \to 0$ for the multi-dimensional Navier-Stokes equations with a hard-sphere potential is investigated, and to the survey paper \cite{perrin2018-survey}, which provides a precise picture on the related state of the art. 
Finally, we remark that the asymptotics $\ep \to 0$ also shares some features with other kinds of singular limits for the compressible Euler equations, as the vanishing pressure limit~\cite{chen2003formation} and the low Mach number limit~\cite{colombo2016}.

\bigskip
The aim of this work is twofold. First, we construct global-in-time weak ($L^\infty$) solutions to the singular compressible system \eqref{eq:sing-Euler}, where the parameter $\ep> 0$ 
is fixed. Then we turn to the smooth framework.
In the latter, our first goal is to provide a precise description of how the solution breakdown is influenced by the hard-sphere potential.
Next, we aim at rigorously justifying the convergence $\ep \to 0$ of smooth solutions to \eqref{eq:sing-Euler} towards (weak) solutions to system \eqref{eq:lim-Euler}.

\bigskip
Before stating our main results, we provide a brief overview of the existing literature on the related mathematical setting.   

The investigation on the existence of weak solutions to the one-dimensional compressible Euler equations for arbitrarily large $L^\infty$ initial data started with the work of DiPerna~\cite{diperna1983}, where the validity of the vanishing viscosity approximation (an artificial viscosity is added to the system and then sent to zero) was established for the first time, by means of the \emph{compensated compactness method} introduced in the late 70's by Tartar and Murat (see [Chapter 5, \cite{evans1990}] and references therein). 
The general idea is the following. 
Weak-* convergence of the sequence of approximate solutions is ensured by the \emph{invariant region method} which provides 
$L^\infty$ bounds (uniform with respect to the viscosity parameter) on the sequence.
However, this weak convergence is not enough to be allowed to pass to limit in the nonlinear terms of the equations, namely the convective term $m^2/\rho$ and the pressure, because of potential high-frequency oscillations of the approximate solutions.
The main core of the compensated compactness method consists exactly in showing that the mechanism of entropy dissipation actually quenches the high-frequency oscillations, so enforcing strong convergence of the approximate solutions. 
DiPerna in \cite{diperna1983} started the investigation with the case of polytropic gases $p(\rho) = \kappa \rho^\gamma$, where $\gamma = 1 +2/n$, $n\geq 5$ odd. Later, the study was extended to all values $\gamma \in [1, +\infty)$, as a cumulative result due to many authors and several steps (see \cite{chen2000} for a review, and ~[Chapter 16, \cite{dafermos2000}]).  
In DiPerna's work, and in most of the following studies as well, the compensated compactness arguments rely on an explicit formula for the fundamental solution (the so-called \emph{entropy kernel}) of the entropy equation, which is available in the polytropic case i.e. for $p(\rho)= \kappa \rho^\gamma$. 
An alternative strategy for providing weak solutions for polytropic gases is due to Lu, see [Chapter 8, \cite{lu2002}]. Rather than  relying on the whole family of (infinite) entropies generated by the entropy kernel, this argument makes a smart use of only four entropy-entropy flux pairs, whose expressions are explicit.
This method is employed in \cite{lu2002} to deal with polytropic gases with $\gamma \in (1,3)$. 
The case of more general pressure laws has been tackled by Chen and LeFloch in \cite{chenlefloch2000} and \cite{chen2003}. 
The authors showed the existence of weak solutions to the one-dimensional compressible Euler equations for arbitrarily large $L^\infty$ initial data with non-singular pressure laws $p=p(\rho)\in \mathcal{C}^3(0,+\infty)$, which roughly speaking behave like $\rho^\gamma$, $\gamma \in (1,3)$ close to vacuum. 
Their result is based on a refined analysis of the singularities and takes advantage of some cancellation for the (non-explicit) entropy kernel.
In this paper, we present a new result of existence of weak solutions in the case of a singular pressure law $p_\ep$ depending on a small fixed parameter $\ep>0$.
Although a general result for hard-sphere potentials satisfying  \eqref{eq:description-pressure} could follow as an application of the framework due to Chen and LeFloch (see \cites{chenlefloch2000, chen2003}), we chose to provide a more explicit 
proof of the compactness of the solutions for the specific pressure law \eqref{eq:sing-press-Euler} (with $\kappa=0$ and $\gamma \in (1,3]$).
This proof makes use of four explicit entropy-entropy flux pairs in the spirit of Lu's work, [Chapter 8, \cite{lu2002}] and allows us to keep track of the singular parameter $\ep$ throughout all the computations. 

\medskip
The second part of this paper deals with the framework of smooth ($C^1$) solutions. 
The analysis of one-dimensional gas dynamics in this setting has a long history, which started with Lax~\cite{lax1964} in the 60's and was further developed by Chen and his coauthors in a series of recent papers (see for instance \cite{chen2017}, \cite{chen2013}, and references therein). 
In his original paper \cite{lax1964} on $2\times 2$ strictly hyperbolic systems, Lax considers initial data which are small perturbations of a constant state and
 shows that if these initial data contain some ``compression'' (
 in a sense precised below) then the corresponding smooth solutions develop singularities (i.e. blow-up of the gradient of the solution) in finite time; otherwise the solutions are global in time.
This result applies in particular to the compressible Euler equations, more precisely to its reformulation in Lagrangian coordinates, the so-called \emph{p-system} (see system \eqref{eq:p-system} below), in the context of small initial data.
The appearance of singularities for large initial data was instead an open question until the recent work of Chen et al.~\cite{chen2017}. 
They show that singularity formations occur in the p-system with the pressure law $p(\rho)=\kappa \rho^\gamma$ and $\gamma > 1$, if the initial datum (whose size is arbitrarily chosen) contains some compression (in the sense of Definition \ref{def:noncompressive-case}). 
Otherwise, if the initial datum is \emph{everywhere rarefactive} (see again Definition~\ref{def:noncompressive-case}), the smooth solution is global in time.
One key point of the proof of Chen et al. is the derivation of upper and, more importantly, lower bounds (in the case of a polytropic gas $p(\rho)= \kappa \rho^\gamma$ with $\gamma \in (1,3)$) for the density $\rho$.
The upper bound is easily obtained by using the \emph{Riemann invariants} of the system.
The (time-dependent) lower bound is more subtle and relies on the control of the gradients of the Riemann invariants. 

The analysis of the singular system is more delicate in our case, where the tracking of the small parameter $\ep$ is a fundamental issue for dealing with the singular pressure $p_\ep(\rho)$ in \eqref{eq:sing-press-Euler}.
Taking inspiration from \cite{chen2017}, in this context the control of the Riemann invariants of the system allows us to provide a detailed description of the life-span of the solution, highlighting and making a distinction between the gradients blows up and the vanishing parameter $\ep$ as responsible for the breakdown of the smooth solutions. 
This last point confirms the above-mentioned numerical study of Bresch and Renardy~\cite{bresch2017}.
In the end, we perform the limit as $\ep \to 0$, so rigorously justifying the connection between \eqref{eq:sing-Euler} and  \eqref{eq:lim-Euler} for ``well-prepared'' initial data.
This convergence result is likely the main novelty of the present paper, where, to the best of our knowledge, the limit from the soft congestion model to the free-congested Euler equations is proven for the first time.


\bigskip
A common point of the strategies dealing with the two classes of solutions to \eqref{eq:sing-Euler} studied in this paper, i.e. global-in-time weak bounded solutions and local-in-time $\mathcal{C}^1$ solutions, is the use of the Riemann invariants.
Their control gives indeed a refined estimate for the density of system \eqref{eq:sing-Euler} with the singular pressure law $p_\ep(\rho)$ in \eqref{eq:sing-press-Euler} in the smooth setting, in the regions close to the maximal congestion constraint.
On the other hand, in the context of weak solutions,
an $\ep$-uniform bound in $L^\infty$ of the (singular) internal energy $H_\ep$ satisfying $H'_\ep(\rho)\rho -H_\ep(\rho) = p_\ep(\rho)$ follows as an application of the invariant region method.
We point out that the key assumption for providing the internal energy bound in the weak framework
is exactly the same hypothesis from which the control of the Riemann invariants of the p-system in the smooth setting (see Remark~\ref{rmk:initial} below) is obtained.

Of course other kinds of solutions to the compressible Euler system available in literature would be appealing for our problem with a hard-sphere potential, but they are out of the scope of the present paper.
We just mention the finite-energy solutions studied by LeFloch and Westdickenberg in~ \cite{lefloch2007} and by Chen and Perepelitsa in \cite{chen2015}.
In that context, the bound on $H_\ep(\rho)$ would hold in $L^\infty_tL^1_x$ rather than $L^\infty_{t,x}$.
Lastly, the case of $BV$ solutions, displaying a quite vast literature, see for instance the books of Bressan~\cite{bressan2000}, Dafermos [Chapter 15, \cite{dafermos2000}] and references therein, will be addressed in a forthcoming investigation specifically devoted to that issue.


\paragraph{Notations and conventions}
\begin{itemize}
    \item Given a Banach space $\mathcal{B}$, we indistinctly use both $\mathcal{B}([a, b]\times \Omega)$ and $\mathcal{B}([a,b];\mathcal{B}(\Omega))$, where $[a, b] \subset \mathbb{R}^+$ and $\Omega \subset \mathbb{R}$ (thus in the second part on the smooth setting we often shortly denote $C^1_{t,x}=C^1([0,T]\times \mathbb{R})$). In the case where the time and space functional spaces $\mathcal{B}_1, \mathcal{B}_2$ are different one from another, we use the standard notation $\mathcal{B}_1([a,b]; \mathcal{B}_2(\Omega))$. 
    \item We use the notation $f_1 \lesssim f_2$ if there exists a constant $C$, independent of $\ep$, such that $f_1 \le C f_2$. 
    We also employ the notation $f(t,x)=\mathcal{O}(\ep^\alpha)$, with the constant $\alpha \in \mathbb{R}$, which means that $f(t,x)=\ep^\alpha\tilde{f}(t,x),$ where $\tilde{f}$ is a bounded continuous function in time and space.
\end{itemize}

\subsection*{Main results}
We first present our result for global weak solutions to \eqref{eq:sing-Euler}, then we discuss the case of regular solutions.

\paragraph{Existence of global weak (bounded) solutions.}
We initially assume that
\begin{equation}\label{eq:initial-data}
(\rho_\varepsilon^0, m_\varepsilon^0) \in \big(L^\infty(\R) \big)^2
\end{equation}
where, for some $C_0 > 0$,
\begin{equation}\label{eq:alpha0-ep}
0 \leq \rho^0_\ep \leq 1- C_0 \ep^{\frac{1}{\gamma-1}} =: A^0_\ep  \quad \text{a.e. on} ~~ \R,
\end{equation}
\begin{equation}\label{eq:m0-ep}
m^0_\ep (x) \leq A^0_\ep \rho^0_\ep(x)\quad \text{a.e. on} ~~\R.
\end{equation}

\bigskip
\begin{df}[Weak entropy solutions to \eqref{eq:sing-Euler}]{\label{df:sol-sing-Euler}}
	Let $(\rho_\varepsilon^0(x), m_\varepsilon^0(x))$ satisfying \eqref{eq:initial-data}-\eqref{eq:m0-ep}. 
	We call $(\rho_\ep,m_\ep)$ \emph{global weak entropy solution} to \eqref{eq:sing-Euler} if the following hold:
	\begin{itemize}
		\item $(\rho_\ep,m_\ep) \in (L^\infty(\R_+ \times \R))^2$ and there exists $A_\ep > 0$ such that
		\[
		0 \leq \rho_\ep \leq A_\ep < 1 ~ \quad \text{and} \quad |m_\ep| \leq A_\ep \rho_\ep
		\quad \text{a.e.};
		\]
		\item the mass and momentum equations are satisfied in the weak sense
		\[
		\int_{\R_+} \int_{\R} \rho_\ep \partial_t \varphi \ \d x \d t 
		+ \int_{\R_+} \int_{\R} m_\ep \partial_x \varphi \ \d x \d t
		= - \int_{\R} \rho_\ep^0(x) \varphi(0,x) \d x \quad \forall \ \varphi \in \mathcal{C}^\infty_c(\R_+ \times \R);
		\]
		\begin{align*}
		& \int_{\R_+} \int_{\R} m_\ep \partial_t \varphi \ \d x \d t 
		+ \int_{\R_+} \int_{\R} \left(\frac{m_\ep^2}{\rho_\ep} + p_\ep(\rho_\ep) \right) \partial_x \varphi \ \d x \d t \\
		&\quad 	= - \int_{\R} m_\ep^0(x) \varphi(0,x) \d x \quad \forall \ \varphi \in \mathcal{C}^\infty_c(\R_+ \times \R),
		\end{align*}
		\item the entropy inequality is satisfied, i.e. for any pair $(\eta,q)$ of entropy-entropy flux, $\eta$ convex, and $\phi \in \mathcal{C}^\infty_c(\R_+ \times \R)$, $\phi \geq 0$,
		\[
		-\int_{\R_+} \int_{\R} \eta(\rho_\ep,m_\ep) \partial_t \phi \ \d x \d t 
		- \int_{\R_+} \int_{\R}q(\rho_\ep, m_\ep) \partial_x \phi \ \d x \d t
		\leq 0 .
		\]
	\end{itemize}
\end{df}

\bigskip
\begin{thm}[Existence of global weak solutions]\label{thm:weak-sol-ep}
	Consider the pressure law \eqref{eq:sing-press-Euler} with $\kappa=0$ and $\gamma \in (1,3]$. 
	Let $(\rho^0_\ep,m^0_\ep)$ satisfy conditions \eqref{eq:initial-data}-\eqref{eq:m0-ep}. 
	Then there exists a global weak entropy solution $(\rho_\ep, m_\ep)$ to \eqref{eq:sing-Euler} in the sense of Definition \ref{df:sol-sing-Euler}.
	Moreover, the following inequality holds
	\begin{equation}\label{eq:bound-weak-rho}
	0 \leq \rho_\ep \leq 1-C\ep^{\frac{1}{\gamma-1}} \quad \text{a.e.}
	\end{equation}
	for some generic constant $C$ independent of $\ep$. 
\end{thm}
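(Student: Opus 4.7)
The plan is to follow the classical vanishing viscosity plus compensated compactness scheme, carefully adapted to track the singular pressure. I regularize by adding parabolic terms,
\begin{align*}
\partial_t \rho^\nu + \partial_x m^\nu &= \nu \partial_{xx} \rho^\nu, \\
\partial_t m^\nu + \partial_x\!\left( \tfrac{(m^\nu)^2}{\rho^\nu} \right) + \partial_x p_\ep(\rho^\nu) &= \nu \partial_{xx} m^\nu,
\end{align*}
together with mollified initial data that stay uniformly bounded away from both the vacuum and the congestion threshold. Standard parabolic theory yields smooth local-in-time solutions; the goal is to obtain uniform-in-$\nu$ $L^\infty$ bounds allowing global continuation, and then to pass to the limit $\nu \to 0$ at fixed $\ep$.

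The crucial $L^\infty$ control comes from the Riemann invariants. For $\kappa = 0$, one defines
$$w = u + h_\ep(\rho), \qquad z = u - h_\ep(\rho), \qquad h_\ep(\rho) := \int_0^\rho \frac{\sqrt{p'_\ep(s)}}{s}\, ds.$$
Since $p'_\ep(s)\sim \ep\gamma(1-s)^{-(\gamma+1)}$ as $s\to 1^-$, a direct computation gives $h_\ep(\rho) \sim C\sqrt{\ep}\,(1-\rho)^{-(\gamma-1)/2}$ near the constraint. A Chueh--Conley--Smoller invariant region argument applied to the parabolic system then provides uniform bounds $|w^\nu|,|z^\nu| \le M_\ep$, where $M_\ep$ is controlled by the initial data via \eqref{eq:alpha0-ep}--\eqref{eq:m0-ep}. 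Inverting the asymptotics of $h_\ep$ yields
$$\rho^\nu \le 1 - C\ep^{\frac{1}{\gamma-1}} \qquad \text{and} \qquad |m^\nu| \le A_\ep \rho^\nu,$$
uniformly in $\nu$. This is precisely \eqref{eq:bound-weak-rho}, and it also justifies global continuation of the parabolic solutions.

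For strong compactness I would follow Lu's four-entropies strategy rather than invoking the general Chen--LeFloch framework, in order to keep the $\ep$-dependence explicit. For each weak entropy-entropy flux pair $(\eta,q)$, the viscous solutions satisfy an equation of the form $\partial_t \eta(\rho^\nu,m^\nu) + \partial_x q(\rho^\nu,m^\nu) = \nu\partial_{xx}\eta - \nu D^2\eta : (\nabla(\rho^\nu,m^\nu))^{\otimes 2}$. Combined with the uniform $L^\infty$ bounds, Murat's lemma places these distributions in a compact subset of $H^{-1}_{\mathrm{loc}}$. Then, picking four explicit entropy-entropy flux pairs tailored to $p_\ep$ (mechanical energy plus three further pairs built from $h_\ep$ and its primitives, in the spirit of [Chapter 8, \cite{lu2002}]), the div-curl lemma generates enough commutation relations on the Young measure $\mu_{t,x}$ of $(\rho^\nu,m^\nu)$ to force it to reduce to a Dirac mass almost everywhere, hence strong $L^p_{\mathrm{loc}}$ convergence. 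Standard dominated convergence then lets one pass to the limit in the nonlinear terms $(m^\nu)^2/\rho^\nu$ and $p_\ep(\rho^\nu)$ and in the entropy inequality.

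The main obstacle is the construction and analysis of the four entropy-entropy flux pairs for the nonstandard pressure \eqref{eq:sing-press-Euler}. One must check their regularity, their behavior both at the vacuum $\rho=0$ (where the system becomes nonstrictly hyperbolic) and near the constraint $\rho\to 1^-$ (where $p_\ep$ blows up), and verify the precise cancellations required in the div-curl commutator. Showing that these four pairs alone suffice to collapse $\mu_{t,x}$ to a Dirac mass, while simultaneously tracking the $\ep$-dependence of every estimate so as to preserve \eqref{eq:bound-weak-rho} in the limit, is the technical heart of the proof; the parabolic regularization, the invariant region step, and the final passage to the limit are comparatively routine once these entropies are in hand.
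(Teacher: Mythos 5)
Your proposal is correct and follows essentially the same route as the paper: viscous regularization with mollified data, a Chueh--Conley--Smoller invariant-region argument on the Riemann invariants giving the $\mu$-uniform bounds and \eqref{eq:bound-weak-rho} (with $C$ independent of $\ep$ thanks to \eqref{eq:alpha0-ep}), and then Lu's four-explicit-entropy strategy with Murat's lemma, the div-curl lemma and reduction of the Young measure to a Dirac mass. The parts you defer as the ``technical heart'' --- the explicit pairs $(\eta_i,q_i)$, the $H^{-1}$/measure controls near vacuum, the combination of div-curl identities with positive coefficients, and the separate treatment of $\gamma=3$ where the quartic velocity coefficient vanishes --- are precisely what the paper carries out in Section~\ref{sec:existence-weak} and the Appendix (Lemma~\ref{lem:combination}).
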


\bigskip
\begin{rmk}
As already said in the introduction, we believe that this weak existence result should extend to more general singular pressure laws \eqref{eq:description-pressure}, such that  $p_\ep (\rho)$ essentially behaves like $\rho^\gamma$ with $\gamma \in (1,3]$ for small $\rho$ (the precise hypotheses close to $0$ can be found in ~\cite{chen2003}), while 
\[
\lim_{\rho \to 1^-} (1-\rho)^\beta p_\ep(\rho) > 0 \quad \text{for some}~ \beta > 1
\]
in the vicinity of the congested region. The reasoning of Chen and LeFloch in~\cite{chen2003} should apply to this case, even though they have not explicitly
dealt with hard sphere potential pressure laws.
We propose here an alternative and explicit proof, where the compensated compactness argument employs smart combinations of a finite number of entropy-entropy flux pairs,
rather than requiring the analysis of the (integral) entropy kernel.
In return, our method requires the specific pressure law~\eqref{eq:sing-press-Euler} with $\kappa = 0$.\\
It is also worth pointing out that our compensated compactness arguments cannot be used again when dealing with the limit $\ep \to 0$, the main stumbling block being the lack of estimates on the singular pressure $p_\ep(\rho_\ep)$.
\end{rmk}

\paragraph{Existence of smooth solutions and asymptotic behavior as $\ep \to 0$.}
The second part of the paper is dedicated to the investigation on regular ($C^1_{t,x}$) solutions to system~\eqref{eq:sing-Euler}.
In this setting, we show that the passage to Lagrangian coordinates allows us to provide a refined description of the solutions. 
More precisely, we are able to analyse and exactly quantify the influence of the singular component of the pressure ($p_{1,\ep}$ in \eqref{eq:sing-press-Euler}) on the breakdown of the smooth solutions. 
After obtaining an existence theory at $\ep$ fixed, we are finally allowed to justify the asymptotics $\ep \rightarrow 0$ 
under additional assumptions on the initial data close to the congestion constraint.

\medskip

{\it Passage to Lagrangian coordinates.}
The previous system \eqref{eq:sing-Euler} is written in the so-called Eulerian coordinates $(t,x)$.
If, instead of $x$, we choose as space variable the material coordinate $\tilde{x}$ such that
\[
\d x = u \d t + v \d \tilde{x} \quad \text{where} \quad v := \frac{1}{\rho},
\]
then the system can be rewritten as
\begin{equation}\label{eq:p-system}
\begin{cases}
\partial_t v - \partial_{\tilde{x}} u =0, \\
\partial_t u +  \partial_{\tilde{x}} \tp_\ep (v) = 0,
\end{cases}
\quad \text{for} \quad (t,\tilde{x}) \in \R_+ \times \R,
\end{equation}
with the pressure law $\tp_\ep(v) := p_\ep(v^{-1})$.
For sake of simplicity, when it is clear that we are in the Lagrangian setting, we shall drop hereafter the notation $\tilde{\cdot}$.
In the context of gas motion, the variable $v$ denotes the \emph{specific volume} (the reciprocal of the gas density) and system \eqref{eq:p-system} is called \emph{p-system}. 
The change of variable can be justified not only for smooth solutions but also in the framework of weak bounded solutions, as shown by Wagner in~\cite{wagner1987}.
Nevertheless, in the latter setting, the definition of weak solutions for the Lagrangian equations must be adapted in the  regions where vacuum occurs. 
This discussion is detailed in \cite{wagner1987} and [Section 1.2, \cite{serre1999}]. 
\\
As $\ep \to 0$, we expect that the sequence of solutions $(v_\ep,u_\ep)_\ep$ to \eqref{eq:p-system} converges to a solution $(v,u)$ of the following free-congested p-system (namely the Lagrangian version of \eqref{eq:lim-Euler}):
\begin{subnumcases}{\label{eq:lim-p}}
\partial_t v - \partial_x u = 0 \\
\partial_t u + \partial_x p + \kappa \partial_x v^{-\tgamma} = 0 \\
v \geq 1, ~ (v-1) p = 0, ~ p \geq 0
\end{subnumcases}

\bigskip
{\it Statement of the results.}
This part provides two main results. The first one concerns the existence of smooth solutions at $\ep>0$ fixed, and makes a distinction between two cases
which depend on the initial data and are defined below.
\begin{df}\label{def:noncompressive-case}
Let us introduce the function $\theta_\ep$, defined as
\[
\theta_\ep(v) := \int_{v}^{+\infty} \sqrt{-p'_\ep(\tau)} \d \tau
\]
and the Riemann invariants
\[
w^0_\ep := u_\ep^0 + \theta_\ep(v^0_\ep), \quad 
z^0_\ep := u^0_\ep - \theta_\ep(v^0_\ep).
\]
At a point $x \in \R$, the initial datum $(v^0_\ep,u^0_\ep)$ is said to be \emph{rarefactive} if it is such that
\begin{equation}\label{hyp:init-compr}
\partial_x w^0_\ep(x) \geq 0 \quad \text{and} \quad \partial_x z^0_\ep(x) \geq 0
\end{equation}
and \emph{compressive} otherwise.
\end{df}

In this regular setting, we assume that the initial datum is $\mathcal{C}^1$ and that
\begin{align}\label{initial-bound}
    \|(v_\ep^0, u_\ep^0)\|_{L^\infty(\mathbb{R})} 
    + \|(\partial_x v_\ep^0, \partial_x u_\ep^0)\|_{L^\infty(\mathbb{R})}
    \le C
\end{align}
for some generic positive constant $C$ independent of $\ep$.
As in the weak setting, we also assume~\eqref{eq:alpha0-ep} for the initial density.  
This is equivalent at assuming
\begin{equation}\label{hyp:init-data}
(v^0_\ep-1)^{\gamma-1} \geq  C\ep
\end{equation}
for some $C>0$, on the initial specific volume.
Furthermore, our initial data $(u_\ep^0, v_\ep^0)$ satisfy

\begin{equation}\label{hyp:init-data-2}
\sqrt{c_\ep^0} \ \partial_x w_\ep^0 \leq Y^0 , \qquad \sqrt{c_\ep^0} \ \partial_x z_\ep^0 \leq Q^0,
\end{equation}
for some positive constants $Y^0, Q^0$ independent of $\ep$. 
The meaning of that will be clarified in Section \ref{sub-initial-data}. 

\medskip
We state our existence result in the smooth setting.

\begin{thm}[Existence and life-span of $(v_\ep,u_\ep)$]{\label{thm:ex-ep}}
Let 
\[
p_\ep(v) = \dfrac{\ep}{(v-1)^\gamma} + \dfrac{\kappa}{v^{\tgamma}} \quad \text{with} \quad 
\kappa > 0, ~ \gamma > 1, ~\tgamma \in (1,3) ~\text{and}~ \ep \leq \ep_0 ~\text{small enough}. 
\]
Assume that the initial data satisfy \eqref{initial-bound}-\eqref{hyp:init-data}-\eqref{hyp:init-data-2}. We have two cases.
\begin{enumerate}
	\item If the initial datum is \emph{everywhere rarefactive} in the sense of Definition \ref{def:noncompressive-case}, 
	then there exists a unique global-in-time $\mathcal{C}^1_{t,x}$ solution $(v_\ep,u_\ep)$, whose $\mathcal{C}^1_{t,x}$-norm is independent of $\ep$.
	\item{\label{it:compres}} Otherwise (i.e. if there exists $x^* \in \R$ such that $\partial_x w^0_\ep(x^*) < 0$ \text{or} $\partial_x z^0_\ep(x^*) < 0$), there exists a unique local $\mathcal{C}^1_{t,x}$ solution $(v_\ep,u_\ep)$ which breaks down in finite time.
\end{enumerate}
Moreover, in case \ref{it:compres} where a blowup in finite time occurs, we have the following lower bounds on the maximal time of existence $T^*_\ep < +\infty$
    \begin{equation}\label{bound_tep}
 T_\ep^* 
 \geq \begin{cases}\displaystyle 
  \inf_{x^* \in \R} \dfrac{\ep^{\frac{1}{2(\gamma-1)}}}{C \sqrt{c_\ep^0(x^*)} \max\{-\partial_x w^0_\ep(x^*),-\partial_x z^0_\ep(x^*) \}} \quad & \text{if} \quad \gamma \in (1,3), \\
 \displaystyle  \inf_{x^* \in \R} \dfrac{\ep^{\frac{1}{4}}}{C \sqrt{c_\ep^0(x^*)} \max\{-\partial_x w^0_\ep(x^*),-\partial_x z^0_\ep(x^*) \}} \quad & \text{if} \quad \gamma = 3, \\
  \displaystyle  \inf_{x^* \in \R} \dfrac{\ep^{\frac{1}{\gamma+1}}}{C \sqrt{c_\ep^0(x^*)} \max\{-\partial_x w^0_\ep(x^*),-\partial_x z^0_\ep(x^*) \}} \quad & \text{if} \quad \gamma > 3,
 \end{cases} 
 \end{equation}
 where $C>0$ is a suitable constant independent of $\ep$.
\end{thm}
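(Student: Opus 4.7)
The plan is to work in Lagrangian coordinates on \eqref{eq:p-system} and to propagate bounds through the diagonalized form of the system induced by the Riemann invariants $(w_\ep,z_\ep)$. Local $\mathcal{C}^1_{t,x}$ well-posedness for the quasilinear strictly hyperbolic $2\times 2$ system is classical (\`a la Lax, Kato) and the usual continuation criterion says that the maximal solution persists as long as the $\mathcal{C}^1_{t,x}$ norm stays finite and $v_\ep$ remains bounded away from $1$ and $+\infty$. The whole argument then reduces to propagating such bounds with explicit $\ep$-dependence.

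\emph{Step 1 --- $L^\infty$ bounds.} With $c_\ep(v):=\sqrt{-\tp_\ep'(v)}$ and using $\theta_\ep'=-c_\ep$, a direct computation on \eqref{eq:p-system} yields the decoupled transport equations
\[
(\partial_t+c_\ep\partial_x)w_\ep=0,\qquad(\partial_t-c_\ep\partial_x)z_\ep=0.
\]
Hence $\|w_\ep\|_\infty,\|z_\ep\|_\infty$ are controlled by their initial values, uniformly in $\ep$ by \eqref{initial-bound}. Inverting $\theta_\ep(v_\ep)=(w_\ep-z_\ep)/2$ and using the asymptotics $\theta_\ep(v)\sim C_\gamma\sqrt{\ep}\,(v-1)^{-(\gamma-1)/2}$ as $v\to 1^+$ yields the crucial lower bound $v_\ep-1\gtrsim\ep^{1/(\gamma-1)}$, consistent with \eqref{hyp:init-data}. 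The non-degenerate term $\kappa v^{-\tgamma}$ with $\kappa>0$ simultaneously produces an $\ep$-uniform upper bound on $v_\ep$, hence a positive lower bound on $c_\ep$.

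\emph{Step 2 --- Riccati dynamics for the gradients and dichotomy.} Following the change of unknown of Lax, introduce $Y_\ep:=\sqrt{c_\ep}\,\partial_x w_\ep$ and $Q_\ep:=\sqrt{c_\ep}\,\partial_x z_\ep$. Differentiating the equations of Step~1 in $x$ and using the algebraic identities $\partial_x v_\ep=-(\partial_x w_\ep-\partial_x z_\ep)/(2c_\ep)$, $(\partial_t+c_\ep\partial_x)v_\ep=\partial_x z_\ep$ and $(\partial_t-c_\ep\partial_x)v_\ep=\partial_x w_\ep$ produces, after the cancellation characteristic of this change of unknown, the decoupled Riccati equations
\[
(\partial_t+c_\ep\partial_x)Y_\ep=\alpha_\ep(v_\ep)\,Y_\ep^2,\qquad(\partial_t-c_\ep\partial_x)Q_\ep=\alpha_\ep(v_\ep)\,Q_\ep^2,
\]
with $\alpha_\ep(v):=c_\ep'(v)/(2c_\ep(v)^{3/2})<0$ by strict convexity of $\tp_\ep$. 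In the everywhere-rarefactive case $Y_\ep^0,Q_\ep^0\geq 0$, the non-negative cone is invariant and $Y_\ep,Q_\ep$ are monotonically decreasing along their characteristics; combined with hypothesis \eqref{hyp:init-data-2} and the bounds of Step~1 on $c_\ep$, this delivers $\ep$-uniform bounds on $\partial_x v_\ep,\partial_x u_\ep$ and the continuation criterion yields the global $\mathcal{C}^1$ solution. If instead at some point $x^*$ one has, say, $Y_\ep^0(x^*)<0$, then along the forward characteristic issued from $x^*$ the Riccati equation becomes $Y'=-|\alpha_\ep(v)|Y^2$ and develops a blow-up at time $T_\ep^*$ characterized by $\int_0^{T_\ep^*}|\alpha_\ep(v(\tau))|\,\d\tau=1/|Y_\ep^0(x^*)|$, whence the general lower bound $T_\ep^*\geq(|Y_\ep^0(x^*)|\,\|\alpha_\ep\|_\infty)^{-1}$.

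\emph{Step 3 --- Sharp $\ep$-scaling of $\alpha_\ep$.} This is the main technical obstacle: obtaining a matching upper bound on $\|\alpha_\ep\|_\infty$ over the admissible range of $v$. Splitting $c_\ep^2$ into the singular component $\gamma\ep(v-1)^{-\gamma-1}$ and the regular one $\tgamma\kappa v^{-\tgamma-1}$, the crossover between the two occurs at $v-1\sim\ep^{1/(\gamma+1)}$. In the singular-dominated regime a direct computation gives $|\alpha_\ep(v)|\sim\ep^{-1/4}(v-1)^{(\gamma-3)/4}$, whereas in the regular regime $|\alpha_\ep|$ stays $\mathcal{O}(1)$ uniformly in $\ep$. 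For $\gamma\in(1,3)$ the exponent $(\gamma-3)/4$ is negative, so $|\alpha_\ep|$ peaks at the minimal admissible value $v-1\sim\ep^{1/(\gamma-1)}$ (Step~1), giving $\|\alpha_\ep\|_\infty\sim\ep^{-1/(2(\gamma-1))}$. For $\gamma>3$ the exponent is positive and the supremum is attained at the crossover scale, producing $\|\alpha_\ep\|_\infty\sim\ep^{-1/(\gamma+1)}$. The critical case $\gamma=3$ yields the uniform value $\|\alpha_\ep\|_\infty\sim\ep^{-1/4}$. Plugging these three estimates into the bound of Step~2 together with the identity $|Y_\ep^0(x^*)|=\sqrt{c_\ep^0(x^*)}|\partial_x w_\ep^0(x^*)|$ (and symmetrically for $Q_\ep^0$) gives exactly the three cases of \eqref{bound_tep}.
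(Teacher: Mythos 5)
Your route is essentially the paper's: Riemann invariants and the invariant region in Lagrangian variables, the change of unknown $y_\ep=\sqrt{c_\ep}\,\partial_x w_\ep$, $q_\ep=\sqrt{c_\ep}\,\partial_x z_\ep$ with its decoupled Riccati dynamics, and a three-regime estimate of the Riccati coefficient to produce \eqref{bound_tep}; your regime-by-regime exponents are correct, and so is your sign of the coefficient (the paper's displayed Riccati equation for $y_\ep$ carries the opposite sign, but its blow-up formula agrees with yours). The genuine gap is the assertion in Step~1 that the regular component $\kappa v^{-\tgamma}$ ``simultaneously produces an $\ep$-uniform upper bound on $v_\ep$, hence a positive lower bound on $c_\ep$''. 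The $L^\infty$ control of $(w_\ep,z_\ep)$ only gives $\theta_\ep(v_\ep)\le 2M$, and since $\theta_\ep$ is decreasing and tends to $0$ at infinity this is precisely the lower bound $v_\ep\ge 1+C\ep^{1/(\gamma-1)}$; it yields no upper bound on $v_\ep$ at all, and the mere presence of $\kappa v^{-\tgamma}$ does not prevent unbounded expansion. This missing bound is not cosmetic: through $c_\ep\ge\sqrt{\kappa\tgamma}\,v_\ep^{-(\tgamma+1)/2}$ it is exactly what lets you convert the bounds on $y_\ep,q_\ep$ into bounds on $\partial_x w_\ep,\partial_x z_\ep$ (global existence in the rarefactive case), and what gives the positive lower bound on the Riccati coefficient along characteristics ensuring $\int_0^t a_\ep\,\d\tau\to+\infty$, i.e.\ that the blow-up time is actually finite in the compressive case; Step~3 of your proposal only provides upper bounds on $a_\ep$.

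The paper closes this with a separate lemma, and that is the piece you should add: from \eqref{hyp:init-data-2} and comparison in the Riccati equations one gets the one-sided bounds $y_\ep\le\bar{Y}$, $q_\ep\le\bar{Q}$ (valid in the compressive case too); then the identity $y_\ep+q_\ep=2\sqrt{c_\ep}\,\partial_t v_\ep$ together with $c_\ep\ge\sqrt{\kappa\tgamma}\,v_\ep^{-(\tgamma+1)/2}$ gives $v_\ep^{-(\tgamma+1)/4}\partial_t v_\ep\lesssim \bar{Y}+\bar{Q}$, and integration in time yields
\begin{equation*}
v_\ep(t,x)\le \Big(v^0_\ep(x)^{\frac{3-\tgamma}{4}}+K(\bar{Y}+\bar{Q})\,t\Big)^{\frac{4}{3-\tgamma}},
\end{equation*}
a time-dependent (not $\ep$- and time-uniform) upper bound, and the only point where the hypothesis $\tgamma\in(1,3)$ enters — an assumption your argument never invokes. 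With this bound in place of ``the bounds of Step~1 on $c_\ep$'', your Steps~2--3 go through and coincide with the paper's proof: in particular your computation of the sup of $|\alpha_\ep|$ uses only the lower bound $v_\ep-1\gtrsim\ep^{1/(\gamma-1)}$, so the three rates in \eqref{bound_tep} are unaffected, while the divergence of $\int a_\ep$ (only logarithmic, since $a_\ep\gtrsim v_{max}(t)^{(\tgamma-3)/4}\sim t^{-1}$) genuinely needs the displayed upper bound.
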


Notice that, in full generality, the maximal existence time $T^*_\ep$ depends on $\ep$ and may a priori degenerate to $0$ if no additional assumption is satisfied by the initial data $(\partial_x w^0_\ep,\partial_x z^0_\ep)$.
The specific hypotheses that ensure an $\ep$-uniform lower bound on $T^*_\ep$ are given later on in Assumption \ref{ass-IV-setup}.
The derivation of this lower bound is the preliminary step for the analysis of the singular limit $\ep \to 0$.
Before stating our convergence result, let us recall the notion of solutions for the target limit system \eqref{eq:lim-p}.

\begin{df}[Weak solutions to the free-congested p-system]
{\label{df:sol-p-system}}
	Let $(v^0, u^0) \in \mathcal{C}^1(\R)$ satisfying 
	\[
	v^0 (x) \geq 1 \quad \forall \ x \in \R,
	\]
	and let $T>0$ be fixed.
	We say that $(v,u,p)$ is a \emph{weak solution} to \eqref{eq:lim-p} on the time interval $[0,T]$ if the following hold:
	\begin{itemize}
		\item the mass equation is satisfied a.e.
		\[
		\partial_t v - \partial_x u = 0, \qquad v_{|t=0} = v^0.
		\]
		\item the momentum equation is satisfied in the sense of distributions
 		\begin{align*}
 		\int_{\R_+} \int_{\R} u \partial_t \varphi \ \d x \d t 
 		+ \kappa \int_{\R_+} \int_{\R} v^{\tgamma} \ \partial_x \varphi \ \d x \d t
 	    + \int_{\R_+} \int_{\R} \partial_x \varphi \ \d p(t,x)  \\
 		= - \int_{\R} u^0(x) \varphi(0,x) \d x \quad \forall \ \varphi \in \mathcal{C}^\infty_c(\R_+ \times \R);
 		\end{align*}
        \item the congestion and exclusion constraints are satisfied in the following sense
        \[
        v(t,x) \geq 1 \quad \forall \ (t,x) \quad \text{and} \quad
        p \geq 0, ~ (v-1)p = 0 \quad \text{in}~ \mathcal{D}'.
        \]
	\end{itemize}
\end{df}

\medskip
The result below establishes the validity of the soft congestion approximation to the free-congested Euler equations. 

\begin{thm}[Singular limit in the smooth setting]\label{thm:main-smooth}
Under the hypotheses of the previous Theorem and suitable additional assumptions on the initial data $(v_\ep^0, u_\ep^0) \in \mathcal{C}^1(\mathbb{R})$ (see Assumptions \ref{ass-IV-setup}-\ref{ass-II-setup} in Section \ref{sub-initial-data}), there exist a time interval $[0,T]$, where $T>0$ is independent of $\ep$, a limit initial datum $(v^0,u^0)$ and a triplet $(v, u, p)$ such that the following convergences hold (up to the extraction of a subsequence):
\begin{align*}
v_\ep^0 \rightarrow v^0 \quad & \text{strongly in}~ \mathcal{C}([-L,L]) ~ \text{and weakly-* in} ~ W^{1,\infty}(\R)\\
u_\ep^0 \rightarrow u^0 \quad & \text{strongly in}~ \mathcal{C}([-L,L])~ \text{and weakly-* in} ~ W^{1,\infty}(\R)\\
 v_\ep \rightarrow v \quad & \text{strongly in}~  \mathcal{C}([0,T]\times [-L,L])  ~\text{and weakly-* in} ~ L^\infty((0,T);W^{1,\infty}(\R)),\\
 u_\ep \rightarrow u \quad & \text{strongly in} ~  L^q((0,T); \mathcal{C}([-L,L])), \quad \forall  \ q \in [1, +\infty), \ L > 0,\\  & \quad  \text{and weakly-* in} ~ L^\infty((0,T);W^{1,\infty}(\R)) \\
 p_{\ep,1}(v_\ep) \rightharpoonup p \quad  & \text{in} \quad \mathcal{M}_+((0,T) \times (-L,L)) \quad \forall \ L >0.
 \end{align*}
 Moreover, the limit $(v,u,p)$ is a weak solution of the free-congested p-system associated to the initial datum $(v^0,u^0)$ in the sense of Definition \ref{df:sol-p-system}.
 Finally, the couple $(v,u)$ satisfies the incompressibility constraint in the congested domain, i.e.
 \[
 \partial_x u = 0 \quad \text{a.e. on} \quad \{v=1\}.
 \]
\end{thm}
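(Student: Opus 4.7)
The overall strategy is to use the uniform estimates of Theorem~\ref{thm:ex-ep} under the well-preparedness Assumptions~\ref{ass-IV-setup}--\ref{ass-II-setup} to extract compactness for $(v_\ep, u_\ep)_\ep$, to identify the weak-$*$ limit of the singular pressure as a nonnegative Radon measure, and to verify that the resulting triplet $(v,u,p)$ solves~\eqref{eq:lim-p} together with the exclusion constraints.

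First, I would invoke Theorem~\ref{thm:ex-ep} together with Assumptions~\ref{ass-IV-setup}--\ref{ass-II-setup} to secure a common time horizon $T>0$, independent of $\ep$, on which the smooth solutions $(v_\ep,u_\ep)$ exist and are bounded in $L^\infty((0,T);W^{1,\infty}(\R))$, with the lower bound $v_\ep - 1 \ge C\ep^{1/(\gamma-1)}$ propagated from the initial condition via the Riemann invariants. Using $\partial_t v_\ep = \partial_x u_\ep \in L^\infty$, the family $v_\ep$ is equi-Lipschitz in $(t,x)$, so Arzelà--Ascoli yields $v_\ep \to v$ strongly in $\mathcal{C}([0,T]\times[-L,L])$ and the stated weak-$*$ limit in $W^{1,\infty}$. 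For $u_\ep$, the pointwise bound on $\partial_x u_\ep$ combined with $\partial_t u_\ep = -\partial_x p_\ep(v_\ep)$ (bounded in a suitable negative Sobolev space thanks to the pressure compactness described below) yields, via an Aubin--Lions argument, the strong convergence $u_\ep \to u$ in $L^q((0,T);\mathcal{C}([-L,L]))$. The initial data $(v_\ep^0,u_\ep^0)$ are handled analogously starting from \eqref{initial-bound}.

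The main obstacle is the compactness of the singular pressure $p_{1,\ep}(v_\ep) = \ep/(v_\ep-1)^\gamma$, which is only a priori bounded by $\ep^{-1/(\gamma-1)}$ pointwise. The central identity
\[
\partial_x p_{1,\ep}(v_\ep) = -\partial_t u_\ep - \kappa\,\partial_x(v_\ep^{-\tgamma})
\]
shows that $\partial_x p_{1,\ep}(v_\ep)$ inherits an $L^\infty$ bound from the $\mathcal{C}^1_{t,x}$-control on $u_\ep$ and on the spatial gradient of $v_\ep$ coming from Theorem~\ref{thm:ex-ep}. Thus $p_{1,\ep}(v_\ep)$ is equi-Lipschitz in $x$; combined with its nonnegativity and with the well-preparedness assumption on the initial datum (which encodes a uniform bound on $\int_{-L}^L p_{1,\ep}(v_\ep^0)\,\d x$ as part of Assumptions~\ref{ass-IV-setup}--\ref{ass-II-setup}), one propagates this bound in time into a uniform $L^\infty((0,T);L^1_{loc}(\R))$ estimate. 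This produces the weak-$*$ convergence $p_{1,\ep}(v_\ep) \rightharpoonup p$ in $\mathcal{M}_+((0,T)\times(-L,L))$ along a subsequence. This is the most delicate step, since no energy-type identity directly controls $\int p_{1,\ep}(v_\ep)\,\d x$ and one must really exploit the structure of the momentum equation together with the preparation of the initial data.

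Once compactness is in hand, the passage to the limit is direct. The mass equation passes in $\mathcal{D}'$ by weak-$*$ convergence. In the weak formulation of the momentum equation, $u_\ep\partial_t\varphi$ converges thanks to the strong convergence of $u_\ep$, the term $v_\ep^{-\tgamma}\partial_x\varphi$ by uniform strong convergence of $v_\ep^{-\tgamma}$ (using $v_\ep \ge 1$ and $v_\ep \to v$ in $\mathcal{C}$), and $p_{1,\ep}(v_\ep)\partial_x\varphi$ converges to $\int\partial_x\varphi\,\d p$ by weak-$*$ convergence of measures. It remains to check the constraints: $v\ge 1$ is obtained by passing to the limit in $v_\ep - 1 \ge C\ep^{1/(\gamma-1)}$; $p\ge 0$ follows from the nonnegativity of $p_{1,\ep}$; the exclusion $(v-1)p=0$ follows because on any compact subset of $\{v>1\}$ the strong convergence $v_\ep\to v$ forces $v_\ep\ge 1+\delta/2$ for $\ep$ small, hence $p_{1,\ep}(v_\ep)\le \ep/(\delta/2)^\gamma\to 0$, so $p$ is supported in $\{v=1\}$; finally, $\partial_x u=0$ a.e.\ on $\{v=1\}$ follows from the mass equation $\partial_x u = \partial_t v$ (valid a.e.\ in $L^\infty$) together with the Stampacchia-type fact that $\partial_t v = 0$ a.e.\ on the minimum set $\{v=1\}$ of the Lipschitz function $v$.
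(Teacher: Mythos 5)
Your overall architecture (uniform time horizon via Assumption \ref{ass-IV-setup}, Lipschitz compactness for $v_\ep$, Aubin--Lions/Simon for $u_\ep$, weak-* measure limit for the singular pressure, then passage to the limit and the constraints) matches the paper, and your endgame is fine — in fact your treatments of the exclusion constraint (support of $p$ contained in $\{v=1\}$ via locally uniform decay of $p_{1,\ep}(v_\ep)$ on $\{v>1\}$) and of the incompressibility constraint (derivative of a Lipschitz function vanishes a.e.\ on the level set $\{v=1\}$) are legitimate alternatives to the paper's arguments, which instead pass to the limit in $(v_\ep-1)p_{1,\ep}(v_\ep)=\ep(v_\ep-1)^{1-\gamma}$ and prove a separate renormalization-type lemma. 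But the central step — the $\ep$-uniform control of $p_{1,\ep}(v_\ep)$ — is where your argument breaks down. The claim that $\partial_x p_{1,\ep}(v_\ep)$ "inherits an $L^\infty$ bound" is false: $|\partial_x p_{1,\ep}(v_\ep)|=\gamma\ep(v_\ep-1)^{-\gamma-1}|\partial_x v_\ep|$, and with only the lower bound $v_\ep-1\gtrsim\ep^{1/(\gamma-1)}$ and $\|\partial_x v_\ep\|_{L^\infty}\le C$ this can be of order $\ep^{-2/(\gamma-1)}$. Trying to get it instead from $\partial_x p_\ep(v_\ep)=-\partial_t u_\ep$ is circular: no $\ep$-uniform bound on $\partial_t u_\ep$ is available at this stage — the paper only controls $\partial_t u_\ep$ in $L^1_t W^{-1,1}_{\rm loc}$ \emph{after} the $L^1$ pressure bound is established, and that bound is exactly what you are trying to prove. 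Moreover, Assumptions \ref{ass-IV-setup}--\ref{ass-II-setup} do \emph{not} "encode a uniform bound on $\int_{-L}^{L}p_{1,\ep}(v^0_\ep)\,\d x$": Assumption \ref{ass-II-setup} only says that spatial averages of $v^0_\ep$ over large intervals exceed some $\underline{v}>1$, and Assumption \ref{ass-I-setup} allows $p_{1,\ep}(v^0_\ep)$ to be as large as $\ep^{-1/(\gamma-1)}$ pointwise on sets of fixed measure; and even granting an initial $L^1$ bound, you give no mechanism to propagate it in time (there is no conservation law for $\int p_{1,\ep}(v_\ep)\,\d x$, since $\partial_t p_{1,\ep}(v_\ep)=p_{1,\ep}'(v_\ep)\partial_x u_\ep$ is uncontrolled).

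What the paper actually does at this point is a duality/test-function argument that you are missing: one multiplies the momentum equation by
\[
\phi(t,x)=\frac{x+L}{2L}\int_{-L}^{L}v_\ep(t,z)\,\d z-\int_{-L}^{x}v_\ep(t,z)\,\d z ,
\]
so that $\partial_x\phi=\langle v_\ep(t)\rangle-v_\ep(t,x)$. The $L^\infty$ bound on $u_\ep$ together with Assumption \ref{ass-II-setup} guarantees $\langle v_\ep(t)\rangle\ge\frac{\underline{v}+1}{2}>1$ for all $t\le T$ once $L\ge L^*$, so on the near-congested set $\{v_\ep\le\frac{3+\underline{v}}{4}\}$ the weight $\partial_x\phi$ is bounded below by $\frac{\underline{v}-1}{4}>0$, while on its complement the pressure is trivially bounded; the terms $\int u_\ep\,\partial_t\phi$ and $\int u^0_\ep\,\phi(0,\cdot)$ are controlled by the $L^\infty$ bounds alone. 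This yields $\|p_\ep(v_\ep)\|_{L^1((0,T)\times(-L,L))}\le C$ uniformly in $\ep$, which is the bound all your subsequent steps (the Simon compactness for $u_\ep$, the weak-* limit in $\mathcal{M}_+$, and the exclusion constraint) rely on. Without this, or some substitute for it, your proof does not close.
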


\medskip
\begin{rmk}\label{rmk-assumption-thm7}
For sake of clarity, we have postponed to Section \ref{sub-initial-data} the precise statement of the two additional assumptions which are needed to pass to the limit as $\ep \to 0$. 
Assumption \ref{ass-IV-setup} ensures that the whole sequence $(v_\ep,u_\ep)_\ep$ exists on a time interval $[0,T]$ independent of $\ep$,
while Assumption \ref{ass-II-setup} is a technical hypothesis which basically states that the initial specific volume $v^0$ is not congested (i.e. equal to $1$) in the whole domain. 
As in previous studies dealing with the same singular limit (see for instance~\cite{peza2015}), this latter assumption is required to control the pressure $p_\ep(v_\ep)$ in an appropriate functional space.
Regarding Assumption \ref{ass-IV-setup}, the idea is to control (in terms of $\ep$) $\partial_x u_\ep^0, \, \partial_x v_\ep^0$ in the regions initially close to the congestion constraint. 
\end{rmk}

\medskip

\begin{rmk} [Assumptions on the pressure]
 \label{rmk-tgamma}
 We have assumed in Theorem~\ref{thm:ex-ep} and Theorem~\ref{thm:main-smooth} that the exponent $\tgamma$ of the non-singular component of the pressure $p_2$ lies in the interval $(1,3)$. 
 This assumption is mainly used when deriving a lower bound on the sequence of the maximal times $(T^*_\ep)_\ep$ (see Proposition~\ref{prop:inf-T*}). 
 However it is actually not necessary to guarantee the first part of Theorem~\ref{thm:ex-ep}, that is the global existence or the blow-up in finite time depending on the presence or not of a compression in the initial datum.\\
The specific form of the pressure \eqref{eq:sing-press-Euler} (which blows
up close to 1 like a power law) is used in Sections \ref{sec:strong-sol-ep}-\ref{sec:strong-sol-lim} to exhibit the small scales associated to the singular limit $\ep \to 0$ (see in particular estimate \eqref{bound_tep} and Assumption \ref{ass-IV-setup}). 
Nevertheless, we expect similar results for
more general hard-sphere potentials. 
All the estimates will then depend on the specific balance between the parameter $\ep$ and the type
of the singularity close to $1$ encoded in the pressure law.
\end{rmk}

\medskip
\begin{rmk} [Control of the Riemann invariants and link with the internal energy] \label{rmk:initial}
The crucial assumption that we make in both, weak and smooth, settings is \eqref{eq:alpha0-ep} (reformulated in \eqref{hyp:init-data} with the specific volume).
It bounds from below the distance between the initial density $\rho^0_\ep$ and the maximal threshold $\rho^* = 1$, and allows a control of the Riemann invariants, $w_\ep$ and $z_\ep$ (see respectively Section~ \ref{sec:inv-weak} and Section \ref{sec:inv-smooth}).
From another perspective, the initial condition~\eqref{eq:alpha0-ep} guarantees that the internal energy at time $0$ (and consequently for all times) is bounded uniformly with respect to $\ep$.
	Indeed, defining the internal energy as
	\begin{equation}\label{df:energy}
	H_\ep(\rho) = \dfrac{\ep}{\gamma-1} \dfrac{\rho^\gamma}{(1-\rho)^{\gamma-1}}
	\end{equation}
	which is such that
	\[
	\rho H'_\ep(\rho) - H_\ep(\rho) = p_\ep(\rho),
	\]
	we ensure, thanks to \eqref{eq:alpha0-ep}, that
	\begin{equation} \label{eq:bound-H0}
	H_\ep(\rho^0_{\ep}) 
	= \dfrac{\ep}{\gamma-1} \dfrac{(\rho^0_{\ep})^\gamma}{(1-\rho^0_{\ep})^{\gamma-1}}
	\leq \dfrac{1}{(\gamma-1)C_0^{\gamma-1}}
	=: H_0.
	\end{equation}
	This provides a connection between the weak ($L^\infty$) and the smooth setting which are both investigated in this work. 
	
\end{rmk}

\subsection*{Organisation of the paper}
Section \ref{sec:existence-weak} is devoted to the existence of global weak $L^\infty$ solutions to \eqref{eq:sing-Euler} at $\ep > 0$ fixed.
Next, in Sections \ref{sec:strong-sol-ep} and \ref{sec:strong-sol-lim} we address the issue of smooth solutions, first analysing the singularity formation problem at $\ep$ fixed. Then, we let $\ep \to 0$ to recover the free-congested system \eqref{eq:lim-Euler} in the limit.
We postponed to the Appendix the proof of technical results. 


\section{Existence of global weak solutions at $\ep$ fixed}
{\label{sec:existence-weak}}

The aim of this section is to prove the existence of global weak solutions to \eqref{eq:sing-Euler} by passing to the limit $\mu \to 0$ in the following regularized system
\begin{subnumcases}{\label{eq:viscous}}
\partial_t \rho_{\mu} + \partial_x m_{\mu} = \mu \partial_{xx} \rho_{\mu}, \\
\partial_t m_{\mu} + \partial_x\left(\frac{m_{ \mu}^2}{\rho_{\mu}}\right) + \partial_x p_\ep(\rho_{\mu}) = \mu \partial_{xx} m_{\mu}
\end{subnumcases}
where
\begin{equation}\label{df:pressure-weak}
p_\ep(\rho) 
= \ep \left(\dfrac{\rho}{1-\rho}\right)^\gamma \quad \text{with} \quad \gamma \in (1,3].
\end{equation}
Initially, we define 
\begin{equation}\label{initial-data-reg}
(\rho^0_{\ep, \mu}(x), m^0_{\ep, \mu}(x)) :=(\rho^0_\varepsilon (x), m^0_\varepsilon (x)) \star j_\mu (x),
\end{equation}
where $j_\mu (x)$ is a standard mollifier, in such a way that
\begin{equation*}\label{eq:rho0-mu}
0 < a_\mu^0 \leq \rho^0_{\ep, \mu} \leq A^0_\ep <1 , \quad m^0_{\ep, \mu} (x) \leq A^0_\ep \rho^0_{\ep, \mu}(x) \quad \text{a.e. on} ~ \R.
\end{equation*}
In view of later purposes, we rewrite the previous system in the compact form
\begin{equation} \label{eq:sys-v-mu}
\begin{cases}
\partial_t U_{\mu} + \partial_x f_\ep(U_{\mu}) = \mu \partial_{xx} U_{\mu},\\
U_{\mu}(0, x):=(\rho_{\ep, \mu}^0, m_{\ep, \mu}^0),
\end{cases}
\end{equation}
with
\[
U_{\mu}=\begin{pmatrix}\rho_{ \mu} \\ m_{ \mu}\end{pmatrix},
\quad f_\ep(U_{\mu}) = \begin{pmatrix} m_{\mu} \\ \dfrac{m_{\mu}^2}{\rho_{\mu}} + p_\varepsilon(\rho_{\mu})
\end{pmatrix}.
\]
In the next Subsection \ref{sec:inv-weak}, we prove the existence and uniqueness of global solutions to~ \eqref{eq:viscous} deriving at the same time uniform bounds with respect to the viscosity parameter $\mu$.
Then, we show in Subsection \ref{sec:mu-to-0} how we can use these bounds to pass to the limit $\mu \to 0$.

Note that we have dropped the subscript $\ep$ in the above variables/equations (except in the pressure law and the initial datum where it is important to keep in mind that $\ep > 0$) to lighten the notation, the limit $\ep \to 0$ being not considered in this section.

\subsection{Analysis  of the viscous regularized system} {\label{sec:inv-weak}}

\begin{prop}[Invariant region]\label{prop:invariant}
	Define the Riemann invariants of system \eqref{eq:sing-Euler} as 
	\begin{equation}\label{df:w}
	w(\rho, m) = \dfrac{m}{\rho} + \int_0^{\rho} \dfrac{\sqrt{p'_\ep(s)}}{s} ds , 
	\end{equation}
	\begin{equation} \label{df:z}
	z(\rho, m) = \dfrac{m}{\rho} - \int_0^{\rho} \dfrac{\sqrt{p'_\ep(s)}}{s} ds .
	\end{equation}
	Under the initial conditions \eqref{eq:initial-data}-\eqref{eq:m0-ep}, the quantity $M = \|w(\rho^0_\mu,m^0_\mu)\|_{L^\infty_x}$ is bounded uniformly with respect to $\ep$ and $\mu$, and the domain
	\begin{equation}\label{df:Sigma}
	\Sigma := \left\{(\rho_\mu,m_\mu), ~ w(\rho_\mu,m_\mu) \leq M  \right\} \cap \left\{(\rho_\mu,m_\mu), ~ z(\rho_\mu,m_\mu) \geq -M  \right\}
	\end{equation}
	is an invariant region for \eqref{eq:sys-v-mu}.
\end{prop}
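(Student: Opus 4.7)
The proof separates into two essentially independent claims: an explicit $\ep$-uniform estimate for the Riemann invariants at time $0$, and a parabolic maximum-principle argument showing invariance of $\Sigma$ under the viscous flow.

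For the bound on $M$, the key is the scaling of $\Phi_\ep(\rho):=\int_0^\rho \sqrt{p'_\ep(s)}/s\,\d s$ as $\rho$ approaches the congestion threshold. Differentiating \eqref{df:pressure-weak} directly gives
\[
p'_\ep(s) = \ep\gamma\,s^{\gamma-1}(1-s)^{-(\gamma+1)}, \qquad \frac{\sqrt{p'_\ep(s)}}{s} = \sqrt{\ep\gamma}\,s^{(\gamma-3)/2}(1-s)^{-(\gamma+1)/2}.
\]
The singularity at $s=0$ is integrable for $\gamma>1$. Near $s=1$ the antiderivative behaves like a constant multiple of $\sqrt{\ep}\,(1-s)^{(1-\gamma)/2}$; evaluated at $s=A^0_\ep = 1-C_0\ep^{1/(\gamma-1)}$ this gives a contribution of order $\sqrt{\ep}\cdot\ep^{-1/2}$, so the two powers cancel exactly and $\Phi_\ep(A^0_\ep)$ is bounded independently of $\ep$. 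Combined with the bound $|m^0_\ep|\le A^0_\ep\,\rho^0_\ep$ from \eqref{eq:m0-ep} (read as a two-sided bound, consistent with Definition~\ref{df:sol-sing-Euler}), one obtains an $\ep$-uniform $L^\infty$ estimate for $w(\rho^0_\ep,m^0_\ep)$. Since convolution with $j_\mu$ contracts the $L^\infty$ norm, $M$ is bounded uniformly in $\ep$ and $\mu$.

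For the invariance of $\Sigma$, I would first use standard parabolic theory to produce a short-time smooth solution to \eqref{eq:viscous} with $\rho_\mu$ strictly bounded away from $0$ and $1$, and then diagonalise the system along that solution. A chain-rule computation using \eqref{eq:viscous}, together with the velocity form $\partial_t u + u\partial_x u + \tfrac{p'_\ep(\rho)}{\rho}\partial_x\rho = \mu\partial_{xx} u + 2\mu\tfrac{\partial_x\rho}{\rho}\partial_x u$, yields after cancellation
\[
\partial_t w + \lambda_2\,\partial_x w - \mu\,\partial_{xx} w \;=\; 2\mu\,\frac{\partial_x\rho_\mu}{\rho_\mu}\,\partial_x u_\mu \;-\; \mu\,\Phi''_\ep(\rho_\mu)(\partial_x\rho_\mu)^2,
\]
with $\lambda_2 = u_\mu+\sqrt{p'_\ep(\rho_\mu)}$. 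At a spatial maximum of $w$ one has $\partial_x w=0$, so $\partial_x u_\mu = -\Phi'_\ep(\rho_\mu)\partial_x\rho_\mu$, and the right-hand side reduces to
\[
-\frac{\mu\,(\partial_x\rho_\mu)^2}{2\rho_\mu\sqrt{p'_\ep(\rho_\mu)}}\left(p''_\ep(\rho_\mu)+\frac{2\,p'_\ep(\rho_\mu)}{\rho_\mu}\right).
\]
For the pressure \eqref{df:pressure-weak}, a short computation gives
\[
p''_\ep(\rho)+\frac{2\,p'_\ep(\rho)}{\rho} \;=\; \frac{\ep\gamma(\gamma+1)\,\rho^{\gamma-2}}{(1-\rho)^{\gamma+2}} \;\geq\; 0,
\]
so the remainder is non-positive at any maximum of $w$, and the classical parabolic maximum principle (applied to the perturbed function $w+\delta t$ to break degeneracy, then $\delta\to 0$) propagates $w\le M$. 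A symmetric computation for $z$ with $\lambda_1 = u_\mu - \sqrt{p'_\ep(\rho_\mu)}$ produces the same quadratic form with the opposite sign and propagates $z\ge -M$ by the min-principle.

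The main obstacle is ensuring that this formal computation is legitimate despite the singularity of $p_\ep$ at $\rho=1$. The invariance of $\{w\le M, z\ge -M\}$ confines $(\rho_\mu,m_\mu)$ to a compact subset of $\{\rho<1\}$ of the form $\Sigma\cap\{\rho\le A_\ep\}$ for some $A_\ep<1$ determined by $M$ and $\Phi_\ep$; this in turn keeps all the quantities $w$, $z$, $\Phi'_\ep$, $\Phi''_\ep$ smooth along the solution. Positivity of $\rho_\mu$ is separately preserved by the parabolic maximum principle applied to the mass equation, starting from the strictly positive mollified datum $\rho^0_{\ep,\mu}\ge a^0_\mu>0$. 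Together these estimates allow the standard continuation criterion to upgrade the local smooth solution to a global one, on which the invariance of $\Sigma$ then holds for all times.
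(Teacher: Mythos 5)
Your argument is correct in substance, but it takes a genuinely different route from the paper for the invariance part. The paper does \emph{not} run the parabolic maximum principle directly on the Riemann invariants: it records exactly your inequalities for $w$ and $z$ only as a formal remark, and instead invokes the Chueh--Conley--Smoller invariant-region theorem (Theorem~\ref{thm:invariant-region}), reducing the proof to checking that $G_1=w-M$ and $G_2=-z-M$ are quasi-convex on $\partial\Sigma$; this is verified through the concavity of $L(\rho)=\rho M-\rho\int_0^\rho \sqrt{p'_\ep(s)}/s\,\d s$, i.e. $L''<0$. It is worth noting that your sign condition $p''_\ep(\rho)+2p'_\ep(\rho)/\rho=\ep\gamma(\gamma+1)\rho^{\gamma-2}(1-\rho)^{-(\gamma+2)}\geq 0$ is, up to the positive factor $1/(2\sqrt{p'_\ep})$, exactly $-L''(\rho)\geq 0$, so the two proofs hinge on the same structural property of the hard-sphere pressure; your computation makes the mechanism (the viscous commutator terms having a favorable sign at extrema) explicit, while the paper's route delegates all maximum-principle technicalities (quasilinear coupling, where extrema are attained) to the cited black-box theorem, at the price of a case-by-case check of quasi-convexity on the boundary. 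The estimate of $M$ is essentially the paper's own: there it is phrased through $\Theta_\ep(A^0_\ep)\sim\sqrt{H_\ep(A^0_\ep)}$ and the uniform energy bound \eqref{eq:bound-H0}, which is the same cancellation of powers of $\ep$ you exhibit directly.

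Two small points to tighten if you keep the direct maximum-principle route: to propagate $w\leq M$ the perturbation should be $w-\delta t$ (your $w+\delta t$ goes the wrong way; $z+\delta t$ is the right choice for the lower bound on $z$), and since the spatial domain is $\R$ you must justify that a maximum is attained (e.g. an additional localization/penalization using the boundedness of $w$, $z$ and of the coefficients along the confined solution). Also, ``convolution contracts the $L^\infty$ norm'' is not quite the right justification for the mollified data: what you need is that the convex constraints $0\leq\rho^0_{\ep,\mu}\leq A^0_\ep$ and $|m^0_{\ep,\mu}|\leq A^0_\ep\,\rho^0_{\ep,\mu}$ are preserved by averaging, after which your bound on $w$ applies verbatim.
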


This result follows from standard arguments, for sake of clarity we postpone its proof to the Appendix (see Subsection~\ref{sec:inv-reg}).
Equipped with these controls of the Riemann invariants, we infer a priori $L^\infty$ bounds on the variables $(\rho_\mu,m_\mu)$, from which the existence of regular solutions follows. 

\begin{prop}\label{prop:bounds-existence}
	Let $\mu, \ \ep > 0$ be fixed and assume the initial conditions \eqref{eq:initial-data}-\eqref{eq:m0-ep}.
	Then, for any $T >0$, there exists a unique global smooth solution $U_{\mu}= (\rho_{\mu},m_{\mu})$ satisfying:
	\begin{equation}
	0 < a_\mu \leq \rho_{\mu} \leq A_\ep < 1, \quad |m_{\mu}| \leq B \, \rho_{\mu} 
	\quad \text{a.e.},
	\end{equation}
	for some constant $a_\mu> 0$, while $A_\ep > 0$ is independent of $\mu$, and $B$ is independent of $\mu$ and $\ep$.
	Moreover there exists a constant $C > 0$ independent of $\mu$ and $\ep$, such that
 	\begin{equation}
    \rho_\mu \leq 1 - C \ep^{\frac{1}{\gamma-1}} \quad \text{a.e.}.
	\end{equation}
\end{prop}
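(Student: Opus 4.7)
The plan is to combine the invariant region result (Proposition \ref{prop:invariant}) with classical parabolic well-posedness and a minimum principle. Local well-posedness for \eqref{eq:sys-v-mu} is standard: since the mollified data $(\rho^0_{\ep,\mu}, m^0_{\ep,\mu})$ are smooth and satisfy $a^0_\mu \leq \rho^0_{\ep,\mu} \leq A^0_\ep < 1$, and since the flux $f_\ep$ is smooth on the compact subset $\{\rho \in [a^0_\mu/2,(1+A^0_\ep)/2]\}$, a fixed-point argument in a parabolic Hölder (or weighted Sobolev) class yields a unique short-time smooth solution along which $\rho_\mu$ stays strictly between $0$ and $1$.

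Next I would extract the uniform $L^\infty$ bounds from Proposition \ref{prop:invariant}: the solution remains in $\Sigma$ for as long as it exists, so writing $m_\mu/\rho_\mu = (w+z)/2$ immediately gives $|m_\mu| \leq B\,\rho_\mu$ with $B := M$ (independent of $\ep$ and $\mu$ by the proposition). For the quantitative upper bound on $\rho_\mu$, I would use the relation $w - z = 2\int_0^{\rho_\mu}\sqrt{p'_\ep(s)}/s\,\d s \leq 2M$ together with
\[
\frac{\sqrt{p'_\ep(s)}}{s} = \sqrt{\ep\gamma}\,\frac{s^{(\gamma-3)/2}}{(1-s)^{(\gamma+1)/2}}.
\]
Since $\gamma \in (1,3]$ gives $s^{(\gamma-3)/2} \geq 1$ on $(0,1]$, one obtains for $\rho_\mu \geq 1/2$
\[
M \;\geq\; \int_{1/2}^{\rho_\mu} \frac{\sqrt{p'_\ep(s)}}{s}\,\d s \;\geq\; c\sqrt{\ep}\,(1-\rho_\mu)^{-(\gamma-1)/2}
\]
for a universal $c>0$; inverting yields $\rho_\mu \leq 1 - C\ep^{1/(\gamma-1)}$ with a constant $C$ independent of both $\mu$ and $\ep$, which fixes $A_\ep$.

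For the strict positivity $\rho_\mu \geq a_\mu > 0$, I would rewrite the mass equation in non-conservative form $\partial_t \rho_\mu + u_\mu \partial_x \rho_\mu - \mu \partial_{xx}\rho_\mu = -\rho_\mu \partial_x u_\mu$ and apply the weak minimum principle for parabolic equations with bounded coefficients, obtaining
\[
\rho_\mu(t,x) \;\geq\; a^0_\mu \exp\!\left(-\int_0^t \|\partial_x u_\mu(s,\cdot)\|_{L^\infty}\,\d s\right) \;=:\; a_\mu \;>\; 0.
\]
The right-hand side does depend on $\mu$ (through the regularity of $u_\mu$), but it stays strictly positive on any finite $[0,T]$. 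Combined with the upper bounds of the previous step, this confines $(\rho_\mu, m_\mu)$ in a compact subset of $\{0<\rho<1\}\times \R$ on which $f_\ep$ is smooth, so the local solution extends globally to any interval $[0,T]$ by the standard continuation argument.

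The main obstacle is the sharp estimate of the second step: extracting the correct scaling $1-\rho_\mu \gtrsim \ep^{1/(\gamma-1)}$ from the integral singularity of $\sqrt{p'_\ep(s)}/s$ at $s=1$, with a constant uniform in both $\mu$ (needed to pass to the vanishing-viscosity limit $\mu\to 0$ in the next subsection) and $\ep$ (needed, further down, for the singular limit $\ep\to 0$). All the other steps are essentially quantitative versions of classical parabolic arguments.
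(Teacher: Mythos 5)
Your derivation of the two quantitative bounds is exactly the paper's argument: both rest on Proposition \ref{prop:invariant}, giving $|u_\mu|\le M$ (hence $|m_\mu|\le B\rho_\mu$) and $\Theta_\ep(\rho_\mu)=\int_0^{\rho_\mu}\sqrt{p'_\ep(s)}/s\,\d s\le M$, and both extract the scaling $1-\rho_\mu\gtrsim \ep^{1/(\gamma-1)}$ from the explicit singularity $\sqrt{p'_\ep(s)}/s=\sqrt{\ep\gamma}\,s^{(\gamma-3)/2}(1-s)^{-(\gamma+1)/2}$ on the set $\{\rho_\mu\ge 1/2\}$, with a constant independent of $\mu$ and $\ep$. (Minor bookkeeping: integrating from $1/2$ produces the additive constant $2^{(\gamma-1)/2}$, so the clean inequality $M\gtrsim \sqrt{\ep}\,(1-\rho_\mu)^{-(\gamma-1)/2}$ only holds once $\rho_\mu$ exceeds a threshold strictly above $1/2$; below that threshold the claimed bound is trivial for bounded $\ep$, so nothing is lost — the paper is equally terse here.)

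The one place you genuinely depart from the paper is the lower bound $\rho_\mu\ge a_\mu>0$ and the continuation to a global solution, which the paper delegates to classical parabolic theory [Theorem 1.0.2, \cite{lu2002}]. Your version has a circularity to be aware of: the minimum-principle bound $\rho_\mu\ge a^0_\mu\exp\big(-\int_0^t\|\partial_x u_\mu(s,\cdot)\|_{L^\infty}\,\d s\big)$ requires $\partial_x u_\mu$ to be integrable in time up to the maximal existence time, but the Lipschitz control of $u_\mu=m_\mu/\rho_\mu$ comes from parabolic regularity of a system whose flux ($m^2/\rho$, and $u_\mu$ itself) degenerates as $\rho_\mu\to 0$, i.e.\ it presupposes the very lower bound you are proving. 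As written, one cannot exclude that $\rho_\mu\to 0$ and $\|\partial_x u_\mu\|_{L^\infty}$ blow up simultaneously at the maximal time, so the continuation step does not close. The classical argument the paper invokes avoids this: for the divergence-form equation $\partial_t\rho_\mu+\partial_x(\rho_\mu u_\mu)=\mu\partial_{xx}\rho_\mu$ a positive lower bound (depending on $\mu$, $t$ and the initial lower bound) is obtained using only the $L^\infty$ bound on $u_\mu$ — which the invariant region controls uniformly — for instance by comparison or lower bounds on the fundamental solution of a drift--diffusion equation with bounded drift; the continuation criterion then applies. Replacing your $\partial_x u_\mu$-based estimate by such a $\|u_\mu\|_{L^\infty}$-based one closes the argument; otherwise the proposal matches the paper's proof.
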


\bigskip
\begin{proof}

	Thanks to Proposition \ref{prop:invariant}, we deduce that
	\begin{equation}\label{eq:bound-G}
	2 M
	\geq \|w-z\|_{L^\infty}
	= 2\Bigg\| \int_0^{\rho_{\mu}} \dfrac{\sqrt{p'_\ep(s)}}{s} ds \Bigg\|_{L^\infty}
	=  2\| \Theta_\ep(\rho_{\mu})\|_{L^\infty}
	\end{equation}
	where $\Theta_\ep$ denotes the primitive of $s\mapsto \sqrt{p'_\ep(s)}/s$ vanishing at $0$.
	From \eqref{eq:bound-G} (recalling that $\gamma >1$) we then infer that there exists a generic constant $C > 0$, independent of $\ep$ and $\mu$, such that
	\[
	\dfrac{\sqrt{\ep}}{(1-\rho_\mu)^{\frac{\gamma-1}{2}}} \mathbf{1}_{\{\rho_\mu \geq 1/2 \}}
	\leq 
	\Theta_\ep(\rho_{\mu})\mathbf{1}_{\{\rho_\mu \geq 1/2 \}}
	\leq C M,
	\]
	that is
	\[
	\rho_{\mu} \leq 1 - C \ep^{\frac{1}{\gamma-1}} =: A_\ep \quad \text{a.e.}
	\]
	On the other hand, we have
	\[
	-\rho_{\mu} M + \rho_{\mu} \int_0^{\rho_{\mu}} \dfrac{\sqrt{p'_\ep(s)}}{s} ds \le m_{\mu} 
	\le \rho_{\mu} M + \rho_{\mu}\int_0^{\rho_{\mu}} \dfrac{\sqrt{p'_\ep(s)}}{s} ds.
	\]
	and thus 
	\[
	|m_{\mu}| \leq 2M \rho_{\mu}.
	\]
	Since $u_{\mu} := \frac{m_{\mu}}{\rho_{\mu}}$ is uniformly bounded, we can obtain the lower bound on $\rho_{\mu}$ (which possibly vanishes as $\mu \rightarrow 0$) and the global existence of the solution $(\rho_{\mu},m_{\mu})$ as well, thanks to classical results on parabolic systems. 
	The interested reader is referred to [Theorem 1.0.2 ,\cite{lu2002}].
\end{proof}

\subsection{Vanishing viscosity limit} {\label{sec:mu-to-0}}

We are now ready to deal with the limit $\mu \to 0$, splitting the reasoning in two steps. First, we exhibit four pairs of entropy-entropy flux $(\eta_i,q_i)$, so providing $H^{-1}_{t,x}$-compactness (in time and space) for each one. Later, we prove the strong convergence of the couple $(\rho_{\mu},m_{\mu})$. The latter result heavily relies on the following classical compensated compactness theorem due to Murat and Tartar (see for instance Dafermos book~[Section 16.2, \cite{dafermos2000}]).

\begin{thm}[Div-Curl Lemma]\label{lem:div-curl}
	Let $\Omega$ be an open subset of $\R^m$, $m \geq 2$, $(G_j)_j, \, (H_j)_j$ be sequences of vector fields belonging to $(L^2(\Omega))^m$ such that
	\[
	G_j \rightharpoonup \bar{G}, \quad H_j \rightharpoonup \bar{H} \quad \text{weakly in} \quad (L^2(\Omega))^m.
	\]
 	If $\displaystyle (div \, G_j)_j, \, (curl \, H_j)_j \subset \text{compact sets of  } H_{loc}^{-1}(\Omega),$
 	then 
 	$$G_j \cdot H_j \rightharpoonup \bar{G} \cdot \bar{H}
 	\quad \text{weakly in} \quad \mathcal{D}'.$$
\end{thm}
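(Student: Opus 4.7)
The plan is to follow the classical Murat--Tartar strategy, combining a Helmholtz decomposition with a Plancherel-type compactness argument. By linearity we may subtract the weak limits and assume $\bar G = \bar H = 0$; it then suffices to prove that $\int_{\R^m} \varphi \, G_j \cdot H_j \, \d x \to 0$ for every $\varphi \in \mathcal{C}^\infty_c(\Omega)$. The argument splits naturally into a localization step, a Hodge-type decomposition, and a passage to the limit.

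First I would localize: fix $\varphi$ and choose a cutoff $\chi \in \mathcal{C}^\infty_c(\Omega)$ with $\chi \equiv 1$ on a neighborhood of $\mathrm{supp}\,\varphi$. Set $\tilde G_j := \chi G_j$, $\tilde H_j := \chi H_j$, extended by zero to $\R^m$. A commutator computation yields $\Div \tilde G_j = \chi \Div G_j + \nabla \chi \cdot G_j$; the first term is compact in $H^{-1}(\R^m)$ by hypothesis, and the second is bounded in $L^2(\R^m)$ with compact support, hence compact in $H^{-1}(\R^m)$ via Rellich. The analogous statement holds for $\curl \tilde H_j$, and both $\tilde G_j$ and $\tilde H_j$ still converge weakly to zero in $L^2(\R^m)$.

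Next I would perform a Helmholtz decomposition of $\tilde H_j$ by solving $-\Delta \phi_j = -\Div \tilde H_j$ on $\R^m$ via Fourier inversion, and writing $\tilde H_j = \nabla \phi_j + W_j$. By construction $\Div W_j = 0$ and $\curl W_j = \curl \tilde H_j$ is compact in $H^{-1}(\R^m)$. The key, and most delicate, point is to infer that $W_j$ is compact in $L^2_{\mathrm{loc}}(\R^m)$: since $\xi \cdot \widehat{W_j}(\xi) = 0$ on the Fourier side, one obtains the pointwise bound $|\widehat{W_j}(\xi)| \lesssim |\widehat{\curl W_j}(\xi)|/|\xi|$, so that high frequencies are controlled by the $H^{-1}$-compactness of $\curl W_j$ while low frequencies are handled by Rellich after a smooth frequency truncation. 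Because $W_j$ weakly converges to zero, one actually gets $W_j \to 0$ strongly in $L^2_{\mathrm{loc}}$.

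Finally I decompose
\[
\int_{\R^m} \varphi \, G_j \cdot H_j \, \d x
= \int_{\R^m} \varphi \, \tilde G_j \cdot \nabla \phi_j \, \d x
+ \int_{\R^m} \varphi \, \tilde G_j \cdot W_j \, \d x,
\]
and treat the two pieces separately. The $W_j$-piece vanishes because $\varphi \tilde G_j$ is bounded in $L^2$ and $W_j \to 0$ strongly in $L^2_{\mathrm{loc}}$. For the gradient piece, integration by parts gives
\[
\int_{\R^m} \varphi \, \tilde G_j \cdot \nabla \phi_j \, \d x
= - \int_{\R^m} \phi_j \, (\nabla \varphi \cdot \tilde G_j) \, \d x
- \int_{\R^m} \phi_j \, \varphi \, \Div \tilde G_j \, \d x.
\]
The first integral vanishes by Rellich's theorem: $\phi_j$ is bounded in $\dot H^1(\R^m)$ and weakly tends to $0$, hence $\phi_j \to 0$ strongly in $L^2_{\mathrm{loc}}$, while $\tilde G_j \rightharpoonup 0$ in $L^2$. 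The second is a duality pairing of the $H^1_0$-bounded sequence $\varphi \phi_j$ against $\Div \tilde G_j$, which is compact in $H^{-1}$ with vanishing weak limit, hence strongly convergent to $0$; the pairing vanishes, concluding the argument. The main obstacle is the $L^2_{\mathrm{loc}}$-compactness of the divergence-free remainder $W_j$, which is the technical heart of the compensated compactness machinery and genuinely relies on having div and curl information compatible through the $H^{-1}$ scale.
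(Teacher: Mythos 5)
The paper itself does not prove this statement: it is the classical Murat--Tartar div--curl lemma, quoted verbatim with a pointer to Dafermos [Section 16.2], so there is no internal proof to compare against. Your argument is the standard localization plus Hodge-decomposition proof found in those references, and its architecture is correct: the reduction to $\bar G=\bar H=0$, the cut-off/commutator step showing $\Div \tilde G_j$ and $\curl \tilde H_j$ are compact in $H^{-1}(\R^m)$ (Rellich for the $\nabla\chi$ terms), the splitting $\tilde H_j=\nabla\phi_j+W_j$, the Fourier bound $|\widehat{W_j}(\xi)|\lesssim |\widehat{\curl W_j}(\xi)|/|\xi|$ yielding $W_j\to 0$ in $L^2_{loc}$ (high frequencies by strong $H^{-1}$ convergence of $\curl W_j$, low frequencies by truncation), and the final integration by parts are all sound.

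One step is stated too quickly, and precisely in the dimension the paper uses ($m=2$, $\Omega\subset\R_+\times\R$). You define $\phi_j$ by Fourier inversion and claim that boundedness in $\dot H^1(\R^m)$ plus weak convergence to $0$ gives $\phi_j\to 0$ strongly in $L^2_{loc}$ by Rellich. Rellich needs local $H^1$ bounds, hence local $L^2$ bounds on $\phi_j$ itself, and $\dot H^1$ boundedness alone does not provide them (it is blind to constants); worse, for $m=2$ the symbol $\xi\cdot\widehat{\tilde H_j}(\xi)/|\xi|^2$ is not square integrable near $\xi=0$, so the Fourier-inversion definition of $\phi_j$ is itself problematic. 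The repair is routine: since $\Div\tilde H_j$ has fixed compact support, define $\phi_j$ as the Newtonian potential, $\phi_j=E*\Div\tilde H_j=\nabla E*\tilde H_j$ with $E$ the fundamental solution of the Laplacian; Young's inequality then gives uniform bounds on $\|\phi_j\|_{L^2(B_R)}$ for every $R$, $\nabla\phi_j$ remains bounded in $L^2(\R^m)$, and $\phi_j\rightharpoonup 0$ in $H^1_{loc}$, after which your Rellich argument for the term $\int \phi_j\,\nabla\varphi\cdot\tilde G_j$ and the $H^{-1}$--$H^1_0$ duality argument for $\int \varphi\,\phi_j\,\Div\tilde G_j$ go through verbatim. (For $m\ge 3$ the embedding $\dot H^1\hookrightarrow L^{2m/(m-2)}$ fixes the normalization and your statement is fine as written.) With this adjustment the proof is complete and is, in substance, the argument the cited references give.
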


\subsubsection{Entropy-entropy flux pairs, compactness in $H^{-1}$}

One can check that the pairs $(\eta_i, q_i)= (\eta_i(\rho,m), q_i(\rho,m))$, $i=1, \dots, 4$, defined as follows
\begin{align*}
(\eta_1,q_1) & = (\rho \ , \ m), \\
(\eta_2,q_2) & = \left(m \ , \ \frac{m^2}{\rho} + p_\ep(\rho) \right), \\
(\eta_3,q_3) & = \left(
\frac{m^2}{2\rho} + \rho \int^\rho{\frac{p_\ep(s)}{s^2} \d s} \ ,
\ \frac{m^3}{2\rho^2} + m \left[\frac{p_\ep(\rho)}{\rho}+ \int^\rho{\frac{p_\ep(s)}{s^2} \d s}\right]  \right), \\
(\eta_4,q_4) & = \Bigg(
\frac{m^3}{\rho^2} + 6m \int^\rho\frac{p_\ep(s)}{s^2} \d s , \\
& \qquad	\frac{m^4}{\rho^3} + 3m^2 \left[\frac{p_\ep(\rho)}{\rho^2} + \frac{2}{\rho}\int^\rho\frac{p_\ep(s)}{s^2} \d s \right] 
+ 6 \left[p_\ep(\rho)\int^\rho\frac{p_\ep(s)}{s^2} \d s - \int^\rho\frac{(p_\ep(s))^2}{s^2} \d s \right]
\Bigg) ,
\end{align*}
are entropy-entropy flux pairs for system \eqref{eq:sing-Euler}. Namely, by definition (see again \cite{dafermos2000}), given a smooth solution $U=(\rho,m)$ to system \eqref{eq:sing-Euler}, one has that 
\[
\partial_t \eta_i(U) + \partial_x q_i(U) = 0, \quad i=1, \dots,4.
\]
The pairs $(\eta_1,q_1)$ and $(\eta_2,q_2)$ are associated with the mass and momentum equation respectively, while $(\eta_3,q_3)$ corresponds to the energy equality.\\
Thanks to the $L^\infty$ bounds on $U_\mu = (\rho_{\mu},m_{\mu})$, one can easily check that the following lemma holds.

\begin{lemma}
	There exists a positive constant $C$ independent of $\mu$ such that $\forall \ i=1,\dots 4$:
	\begin{equation}\label{eq:bound-ei-qi}
	\|\eta_i (U_\mu)\|_{L^\infty_{t,x}} + \|q_i (U_{\mu})\|_{L^\infty_{t,x}} \leq C,
	\end{equation}
	and
	\begin{equation}\label{eq:bound-d-ei-qi}
	\|\partial_\rho \eta_i (U_{\mu})\|_{L^\infty_{t,x}} +\|\partial_m \eta_i (U_{\mu})\|_{L^\infty_{t,x}} \leq C.
	\end{equation}
\end{lemma}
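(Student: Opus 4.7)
\medskip

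\textbf{Plan.} The idea is simply to exploit the $L^\infty$ bounds on $(\rho_\mu,m_\mu)$ provided by Proposition \ref{prop:bounds-existence} and to carefully check that each expression appearing in the $\eta_i$'s and $q_i$'s, although formally singular at $\rho=0$, is in fact a continuous function of $(\rho,u)$ with $u:=m/\rho$ on the compact set $[0,A_\ep]\times[-B,B]$. Here $A_\ep<1$ and $B$ are the constants from Proposition \ref{prop:bounds-existence}, both independent of $\mu$. Hence the resulting bound will depend on $\ep$ (through $A_\ep$) but not on $\mu$, which is exactly what is required.

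\medskip

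\textbf{Step 1: rewrite in terms of $(\rho,u)$.} Since $m_\mu=\rho_\mu u_\mu$ with $|u_\mu|\le B$, every occurrence of $m^k/\rho^{k-1}$ or $m^k/\rho^{k}$ in $\eta_i,q_i$ can be recast as $\rho u^k$ or $u^k$ respectively. For instance $m^2/\rho = \rho u^2$, $m^3/\rho^2 = \rho u^3$, $m^4/\rho^3 = \rho u^4$, etc. All such contributions are therefore continuous in $(\rho,u)$ up to $\rho=0$, hence bounded on $[0,A_\ep]\times[-B,B]$ by a constant depending only on $B$ and $A_\ep$. The analogous computation for $\partial_\rho\eta_i$ and $\partial_m\eta_i$ (e.g.\ $\partial_\rho(m^2/(2\rho))=-u^2/2$, $\partial_m(m^3/\rho^2)=3u^2$, $\partial_\rho(m^3/\rho^2)=-2u^3$) yields continuous functions of $(\rho,u)$ and is likewise uniformly bounded.

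\medskip

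\textbf{Step 2: control the pressure-dependent integrals.} It remains to handle the terms involving $p_\ep(\rho)$ and the antiderivatives $\Phi_\ep(\rho):=\int_0^\rho p_\ep(s)/s^2\,\d s$ and $\Psi_\ep(\rho):=\int_0^\rho (p_\ep(s))^2/s^2\,\d s$, as well as the factor $p_\ep(\rho)/\rho$ appearing in $\partial_\rho\eta_3$ and the flux $q_3$. Since $p_\ep(s)=\ep(s/(1-s))^\gamma$ with $\gamma\in(1,3]$, the integrand of $\Phi_\ep$ behaves like $\ep\, s^{\gamma-2}$ at $s=0$, which is integrable precisely because $\gamma>1$; similarly $(p_\ep(s))^2/s^2\sim \ep^2 s^{2\gamma-2}$ is integrable at $0$. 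On the interval $[0,A_\ep]$ the denominators $(1-s)^\gamma$ stay bounded below by $(1-A_\ep)^\gamma\gtrsim \ep^{\gamma/(\gamma-1)}>0$, so both $\Phi_\ep$ and $\Psi_\ep$, as well as $p_\ep(\rho)/\rho=\ep\rho^{\gamma-1}/(1-\rho)^\gamma$, are continuous on $[0,A_\ep]$ and bounded by a constant depending only on $\ep$ and $\gamma$.

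\medskip

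\textbf{Step 3: conclude.} Combining Steps 1 and 2, each of the quantities $\eta_i(U_\mu)$, $q_i(U_\mu)$, $\partial_\rho\eta_i(U_\mu)$, $\partial_m\eta_i(U_\mu)$ is expressed as a continuous function of $(\rho_\mu,u_\mu)$ taking values in the compact region $[0,A_\ep]\times[-B,B]$, with all coefficients depending on $\ep$ and $\gamma$ but not on $\mu$. This yields the desired uniform-in-$\mu$ $L^\infty$ bounds \eqref{eq:bound-ei-qi}--\eqref{eq:bound-d-ei-qi}. The only mildly delicate point is to ensure integrability of the pressure-dependent kernels at $s=0$, which is guaranteed by the assumption $\gamma>1$; everything else reduces to a bookkeeping exercise in which the apparent $1/\rho$ singularities cancel against powers of $m$.
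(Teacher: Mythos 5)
Your proposal is correct and follows essentially the same route as the paper, which simply asserts that the lemma is an easy consequence of the $L^\infty$ bounds of Proposition \ref{prop:bounds-existence}: your Steps 1--3 just spell out the routine verification (boundedness of $u_\mu=m_\mu/\rho_\mu$, integrability of $p_\ep(s)/s^2$ and $(p_\ep(s))^2/s^2$ at $s=0$ thanks to $\gamma>1$, and boundedness of $p_\ep$ on $[0,A_\ep]$ with $A_\ep<1$), with constants depending on $\ep$ but not on $\mu$, exactly as the statement requires.
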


\bigskip
The main result of this subsection is stated in the next proposition. 

\begin{prop}\label{prop:compact-entropy}
	The following property holds for $i=1,\dots,4$:
	\begin{equation}
	\partial_t \eta_i(U_\mu ) + \partial_x q_i(U_\mu ) 
	\ \subset ~\text{compact set of}~ H^{-1}_{loc}(\mathbb{R}^+ \times \mathbb{R}).
	\end{equation}
\end{prop}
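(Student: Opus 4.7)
The plan is to realize, for each $i=1,\dots,4$, the distribution $\partial_t \eta_i(U_\mu) + \partial_x q_i(U_\mu)$ as the sum of a piece strongly converging to zero in $H^{-1}_{loc}$ and a piece bounded in $\mathcal{M}_{loc}$, and to combine this decomposition with the uniform $W^{-1,\infty}_{loc}$-bound that follows from \eqref{eq:bound-ei-qi} by invoking Murat's interpolation lemma. The starting point is the chain rule applied to the viscous system \eqref{eq:sys-v-mu}: since $(\eta_i,q_i)$ is an entropy-entropy flux pair of the inviscid system, one obtains the identity
\begin{equation*}
\partial_t \eta_i(U_\mu) + \partial_x q_i(U_\mu)
= \mu \partial_x \bigl( \nabla \eta_i(U_\mu) \cdot \partial_x U_\mu \bigr)
- \mu\, D^2 \eta_i(U_\mu)(\partial_x U_\mu, \partial_x U_\mu)
=: I^\mu_{1,i} + I^\mu_{2,i}.
\end{equation*}

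The key input is the dissipation estimate produced by the convex mechanical-energy pair $(\eta_3,q_3)$. A direct computation on $\Sigma$ gives
\[
D^2 \eta_3(U_\mu)(\partial_x U_\mu, \partial_x U_\mu)
= \rho_\mu (\partial_x u_\mu)^2 + \frac{p'_\ep(\rho_\mu)}{\rho_\mu}(\partial_x \rho_\mu)^2 \geq 0,
\]
so integrating the identity for $i=3$ in $(t,x)$ and using the $L^\infty$ control \eqref{eq:bound-ei-qi} on $\eta_3(U_\mu)$ (together with the initial bound) yields
\[
\mu \int_0^T \!\!\int_{\mathbb{R}} \rho_\mu (\partial_x u_\mu)^2 \,\d x\,\d t
+ \mu \int_0^T \!\!\int_{\mathbb{R}} \frac{p'_\ep(\rho_\mu)}{\rho_\mu} (\partial_x \rho_\mu)^2 \,\d x\,\d t \leq C,
\]
with $C$ independent of $\mu$. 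In particular $\sqrt{\mu}\,\partial_x U_\mu$ is bounded in $L^2_{loc}$ (we use here that $\rho_\mu$ and $p'_\ep(\rho_\mu)/\rho_\mu$ are bounded below on $\Sigma$ by Proposition \ref{prop:bounds-existence} and the upper bound $\rho_\mu \leq 1 - C\ep^{1/(\gamma-1)}$).

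With this in hand, I would treat each piece separately. For $I^\mu_{1,i}$, write $\mu\,\nabla\eta_i(U_\mu)\cdot\partial_x U_\mu = \sqrt{\mu}\,\bigl(\nabla\eta_i(U_\mu)\cdot\sqrt{\mu}\,\partial_x U_\mu\bigr)$: the $L^\infty$ bound \eqref{eq:bound-d-ei-qi} on $\nabla \eta_i$ and the $L^2_{loc}$ bound on $\sqrt{\mu}\,\partial_x U_\mu$ imply that this quantity tends to $0$ strongly in $L^2_{loc}$, hence $I^\mu_{1,i}=\partial_x(\cdots) \to 0$ in $H^{-1}_{loc}$. For $I^\mu_{2,i}$, the cases $i=1,2$ are trivial ($\eta_1,\eta_2$ are linear so $I^\mu_{2,i}\equiv 0$), and the case $i=3$ is exactly the dissipation estimate above, giving $L^1_{loc}$-boundedness and in particular boundedness in $\mathcal{M}_{loc}$.

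The remaining case $i=4$ is the real obstacle: $\eta_4$ is not convex, so its Hessian is not sign-definite, and its entries contain $1/\rho$ and $1/\rho^2$ factors. The strategy is to exploit the bounds of Proposition \ref{prop:invariant}, in particular $|m_\mu|\leq B\rho_\mu$ on $\Sigma$, in order to show that $|D^2 \eta_4(U_\mu)(\partial_x U_\mu, \partial_x U_\mu)|$ is pointwise dominated by a constant multiple (depending on $B$ and on $\sup_\Sigma p'_\ep(\rho)/\rho$) of the nonnegative quadratic form $D^2\eta_3(U_\mu)(\partial_x U_\mu, \partial_x U_\mu)$; then the energy dissipation estimate again provides a $\mu$-uniform $L^1_{loc}$-bound, hence $\mathcal{M}_{loc}$-boundedness. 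Finally, the $L^\infty$ bounds \eqref{eq:bound-ei-qi} imply that the whole sequence $\partial_t\eta_i(U_\mu)+\partial_x q_i(U_\mu)$ is bounded in $W^{-1,\infty}_{loc}$, so the decomposition $I^\mu_{1,i}+I^\mu_{2,i}$ combined with Murat's lemma (compact in $H^{-1}_{loc}$ plus bounded in $\mathcal{M}_{loc}$, and total bounded in $W^{-1,\infty}_{loc}$, implies relative compactness in $H^{-1}_{loc}$) yields the claim for every $i=1,\dots,4$.
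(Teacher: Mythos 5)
Your decomposition and the Murat-lemma endgame are exactly those of the paper, and your treatment of $i=4$ (dominating $|D^2\eta_4(\partial_x U_\mu,\partial_x U_\mu)|$ by the $\eta_3$-dissipation, using $|u_\mu|\le C$ and the fact that $p_\ep(\rho)/(\rho\sqrt{p'_\ep(\rho)})$ stays bounded at fixed $\ep$) is a legitimate way to get the $\mathcal{M}_{loc}$ bound. The problem is the step where you claim that $\sqrt{\mu}\,\partial_x U_\mu$ is bounded in $L^2_{loc}$ ``because $\rho_\mu$ and $p'_\ep(\rho_\mu)/\rho_\mu$ are bounded below''. Neither lower bound is available uniformly in $\mu$: Proposition \ref{prop:bounds-existence} only gives $\rho_\mu \geq a_\mu$ with $a_\mu$ depending on $\mu$ and possibly vanishing as $\mu \to 0$ (vacuum is not excluded in the limit), and $p'_\ep(\rho)/\rho \sim \ep\gamma\,\rho^{\gamma-2}$ degenerates at vacuum as soon as $\gamma>2$, which is allowed since the theorem covers $\gamma \in (1,3]$. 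Consequently the energy dissipation
\[
\mu \int\!\!\int \Big( \rho_\mu (\partial_x u_\mu)^2 + \frac{p'_\ep(\rho_\mu)}{\rho_\mu}(\partial_x \rho_\mu)^2 \Big)\phi \,\d x\,\d t \leq C
\]
does \emph{not} yield a $\mu$-uniform $L^2_{loc}$ bound on $\sqrt{\mu}\,\partial_x \rho_\mu$ (nor on $\sqrt{\mu}\,\partial_x m_\mu$, whose control reduces to $\partial_x\rho_\mu$ and $\sqrt{\rho_\mu}\,\partial_x u_\mu$), so your argument that $I^\mu_{1,i}\to 0$ in $H^{-1}_{loc}$ is not justified near vacuum. (A secondary, fixable point: the dissipation estimate must be localized with a test function, since $\eta_3(U_\mu)$ is only $L^\infty$ and not integrable over $\R$.)

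This vacuum degeneracy is precisely what the paper's Lemma \ref{lem:cvg-dxa} is designed to handle, and it settles for a weaker but sufficient statement: $\mu^2\int\!\!\int_K(\partial_x\rho_\mu)^2 \to 0$ rather than $\mu\int\!\!\int_K(\partial_x\rho_\mu)^2 \leq C$. The argument splits the domain at a level $\delta$: on $\{\rho_\mu>\delta\}$ the pressure dissipation gives $\mu^2\int(\partial_x\rho_\mu)^2 \lesssim \delta^{2-\gamma}\mu \to 0$; on $\{\rho_\mu\leq\delta\}$ one tests the (viscous) mass equation against $2\rho_\mu\phi$ and uses $\|u_\mu\|_{L^\infty}\leq C$ to obtain $\mu^2\int(\partial_x\rho_\mu)^2 \leq C\delta^2$, and then lets $\delta\to 0$. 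With this, $\mu\,\partial_x\eta_i(U_\mu) \lesssim \mu(|\partial_x\rho_\mu| + |\sqrt{\rho_\mu}\,\partial_x u_\mu|) \to 0$ in $L^2_{loc}$ and the compactness of $I^\mu_{1,i}$ follows. To repair your proof you need to replace your uniform lower-bound claim by an argument of this type (or restrict to $\gamma\le 2$ and additionally handle the absence of a uniform lower bound on $\rho_\mu$ for the $m$-component, which your splitting $\partial_x m_\mu = u_\mu\partial_x\rho_\mu + \rho_\mu\partial_x u_\mu$ does accommodate); as written, the key compactness step has a genuine gap for $\gamma\in(2,3]$ and wherever vacuum may form.
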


The proof of this proposition relies on the following equation obtained by formally multiplying system \eqref{eq:sys-v-mu} by $\nabla \eta_i(U_\mu) =(\partial_{\rho}\eta_i(U_\mu), \partial_{m}\eta_i(U_\mu))$:
\begin{align} \label{eq:split-eq-eta}
& \partial_t  \eta_i(U_\mu) + \partial_x q_i(U_\mu) \nonumber \\
& = \mu \partial_{xx} \eta_i(U_\mu)-\mu \partial_x U_\mu . \nabla^2 \eta_i(U_\mu) . \partial_x U_\mu ^T
\end{align}
and this lemma below, whose proof can be found in [Lemma 16.2.2,\cite{dafermos2000}].

\begin{lemma}[]\label{lem:dafermos}
	Let $\Omega$ be an open subset of $\R^m$ and $(\phi_j)_j$ a bounded sequence in $W^{-1,p}(\Omega)$ for some $p > 2$.
	Furthermore, let $\phi_j = \chi_j + \psi_j$, where $(\chi_j)_j$ lies in a compact set of $H^{-1}(\Omega)$, while $(\psi_j)_j$ lies in a bounded set of the space of measures $\mathcal{M}(\Omega)$. 
	Then $(\phi_j)_j$ lies in a compact set of $H^{-1}(\Omega)$.
\end{lemma}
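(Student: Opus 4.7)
The plan is to rely on the decomposition \eqref{eq:split-eq-eta} and verify the two hypotheses of Lemma \ref{lem:dafermos} with $\chi_\mu := \mu \partial_{xx} \eta_i(U_\mu)$ and $\psi_\mu := -\mu\, \partial_x U_\mu \cdot \nabla^2 \eta_i(U_\mu) \cdot \partial_x U_\mu^T$, together with the overall $W^{-1,p}_{\mathrm{loc}}$ bound for some $p>2$. The preliminary step is to derive a dissipation estimate from the mechanical energy pair $(\eta_3,q_3)$: multiplying \eqref{eq:sys-v-mu} by $\nabla\eta_3(U_\mu)$ and integrating by parts on a bounded cylinder $[0,T]\times K$ produces
\begin{equation*}
\mu \int_0^T\!\!\int_K \partial_x U_\mu \cdot \nabla^2 \eta_3(U_\mu) \cdot \partial_x U_\mu^T \, \d x\, \d t \lesssim \|\eta_3(U_\mu(0))\|_{L^1(K)} + \text{boundary terms},
\end{equation*}
the right-hand side being uniformly bounded in $\mu$ (and in $\ep$, but only $\mu$ matters here) thanks to the invariant region from Proposition \ref{prop:invariant}. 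Since $\eta_3$ is strictly convex on the region $\{\rho_\mu \in [a_\mu^0, A_\ep]\}$ with a convexity constant depending on $\ep$ but independent of $\mu$, this gives $\sqrt{\mu}\,\partial_x U_\mu \in L^2_{\mathrm{loc}}([0,+\infty)\times \mathbb{R})$ uniformly in $\mu$.

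Equipped with this, I would handle $\chi_\mu$ first. Writing $\chi_\mu = \partial_x\bigl(\mu \partial_x \eta_i(U_\mu)\bigr) = \partial_x\bigl(\sqrt{\mu}\cdot\sqrt{\mu}\,\nabla \eta_i(U_\mu)\cdot \partial_x U_\mu\bigr)$, the bound \eqref{eq:bound-d-ei-qi} combined with the dissipation estimate yields $\mu\,\partial_x \eta_i(U_\mu) \to 0$ strongly in $L^2_{\mathrm{loc}}$, so $\chi_\mu \to 0$ strongly in $H^{-1}_{\mathrm{loc}}$; in particular $(\chi_\mu)_\mu$ lies in a compact subset of $H^{-1}_{\mathrm{loc}}$. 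For $\psi_\mu$, using $\|\nabla^2 \eta_i(U_\mu)\|_{L^\infty}\lesssim 1$ on $\Sigma$ and Cauchy--Schwarz, the dissipation bound gives directly $|\psi_\mu| \lesssim \mu |\partial_x U_\mu|^2$ bounded in $L^1_{\mathrm{loc}}$, hence $(\psi_\mu)_\mu$ is bounded in the space of Radon measures $\mathcal{M}_{\mathrm{loc}}$.

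It remains to check the global $W^{-1,p}_{\mathrm{loc}}$ bound required by Lemma \ref{lem:dafermos} for some $p>2$. Since $\eta_i(U_\mu)$ and $q_i(U_\mu)$ are uniformly bounded in $L^\infty_{t,x}$ by \eqref{eq:bound-ei-qi}, we have $\partial_t \eta_i(U_\mu) + \partial_x q_i(U_\mu)$ bounded in $W^{-1,\infty}_{\mathrm{loc}}$, and hence in $W^{-1,p}_{\mathrm{loc}}$ for any $p\in(2,\infty)$. Lemma \ref{lem:dafermos} then delivers the claimed $H^{-1}_{\mathrm{loc}}$-compactness for each $i=1,\dots,4$.

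The only genuinely delicate point is the convexity/dissipation step: $\eta_3$ is only strictly convex as a function of $(\rho,m)$ on $\rho>0$, so the control of $\sqrt{\mu}\,\partial_x U_\mu$ must be done carefully on the invariant region $\Sigma$, where the lower bound $\rho_\mu \geq a_\mu>0$ from Proposition \ref{prop:bounds-existence} saves us (the bound may degenerate as $\mu\to 0$, but uniformity in $\mu$ is only needed locally in $L^2$ which is what we obtain). Everything else is bookkeeping from the uniform $L^\infty$ estimates already established.
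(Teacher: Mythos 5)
Your proposal does not prove the statement in question. Lemma \ref{lem:dafermos} is the abstract functional-analytic (Murat-type) lemma: a sequence bounded in $W^{-1,p}(\Omega)$, $p>2$, which splits as a part precompact in $H^{-1}(\Omega)$ plus a part bounded in $\mathcal{M}(\Omega)$, is itself precompact in $H^{-1}(\Omega)$. What you wrote is instead a verification of the hypotheses of this lemma for the specific entropy dissipation decomposition \eqref{eq:split-eq-eta} — i.e.\ it is essentially a proof of Proposition \ref{prop:compact-entropy} (together with the dissipation estimates of Lemmas \ref{lem:control:etai} and \ref{lem:cvg-dxa}) — and in your last step you explicitly \emph{invoke} Lemma \ref{lem:dafermos} to conclude. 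As a proof of Lemma \ref{lem:dafermos} itself this is circular: none of the $L^\infty$ bounds, invariant regions, or viscous energy estimates of the Euler system are relevant to the lemma, which makes no reference to the PDE. In the paper the lemma is not reproved either; it is quoted from Dafermos [Lemma 16.2.2, \cite{dafermos2000}].

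What an actual proof requires, and what is missing from your argument, is purely measure-theoretic/interpolation material: (i) on a bounded open set, a bounded set of Radon measures is precompact in $W^{-1,q}$ for every $q<\frac{m}{m-1}$, because $W^{1,q'}_0$ embeds compactly into the space of continuous functions when $q'>m$, so by duality $\mathcal{M}\hookrightarrow W^{-1,q}$ compactly; together with the (continuous) embedding $H^{-1}\hookrightarrow W^{-1,q}$ for $q\le 2$, this makes $(\phi_j)_j=(\chi_j+\psi_j)_j$ precompact in $W^{-1,q}$ for some $q<2$; (ii) the interpolation inequality $\|\cdot\|_{W^{-1,2}}\le \|\cdot\|_{W^{-1,q}}^{\theta}\,\|\cdot\|_{W^{-1,p}}^{1-\theta}$ with $\tfrac12=\tfrac{\theta}{q}+\tfrac{1-\theta}{p}$, $q<2<p$, which upgrades precompactness in $W^{-1,q}$ plus boundedness in $W^{-1,p}$ to precompactness in $H^{-1}$ (after a cutoff/localization to reduce to bounded subdomains). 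This is exactly where the hypothesis $p>2$ is used, a point your write-up never touches. So the gap is not a technical slip but a mismatch of target: you proved the application, not the lemma.
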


Hence, we need to control the two terms of the right-hand side of \eqref{eq:split-eq-eta} in $H^{-1}(\Omega)$ and $\mathcal{M}(\Omega)$ respectively.
Let us begin with the control of the second term. 
\begin{lemma}\label{lem:control:etai}
	For $i=1,\dots 4$, the sequence
	\[
	\Big(\mu \partial_x U_\mu . \nabla^2 \eta_i(U_\mu) . \partial_xU_\mu^T\Big)_\mu \quad \text{is bounded in} \quad L^1_{\rm loc}(\R_+ \times \R).
	\]	
\end{lemma}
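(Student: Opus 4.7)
For $i=1,2$ there is nothing to prove, since $\eta_1 = \rho$ and $\eta_2 = m$ are affine and thus $\nabla^2 \eta_i \equiv 0$. I concentrate on the cases $i=3$ and $i=4$.

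For $i=3$ the plan is to exploit the strict convexity of the mechanical energy. On the invariant region $\Sigma$ one checks that $\partial_{mm} \eta_3 = 1/\rho > 0$ and $\det \nabla^2 \eta_3 = p'_\ep(\rho)/\rho^2 > 0$, so that $\nabla^2 \eta_3$ is positive definite and $\partial_x U_\mu \cdot \nabla^2 \eta_3(U_\mu) \cdot \partial_x U_\mu^T \geq 0$. I would multiply the identity~\eqref{eq:split-eq-eta} (with $i=3$) by a nonnegative cutoff $\varphi \in \mathcal{C}^\infty_c(\R_+\times \R)$, integrate by parts in both variables, and solve for the viscous dissipation, obtaining
\[
\int\!\!\!\int \varphi \, \mu\, \partial_x U_\mu \cdot \nabla^2 \eta_3 \cdot \partial_x U_\mu^T \,\d x \d t = \int\!\!\!\int \bigl(\eta_3 \partial_t \varphi + q_3 \partial_x \varphi + \mu \eta_3 \partial_{xx}\varphi \bigr)\d x \d t + \int \eta_3(U_\mu^0)\varphi(0,x)\, \d x.
\]
Thanks to the uniform $L^\infty$ bounds~\eqref{eq:bound-ei-qi} and the bound on the initial mechanical energy (which stays finite because the condition $\rho^0_{\ep,\mu} \leq A^0_\ep < 1$ keeps the integral $\int^\rho p_\ep(s)/s^2\, \d s$ finite), the right-hand side is bounded uniformly in $\mu$. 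The positivity of the integrand on the left then yields the desired $L^1_{loc}$ control for $i=3$.

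For $i=4$ I would dominate the quadratic form $\partial_x U_\mu \cdot \nabla^2 \eta_4(U_\mu) \cdot \partial_x U_\mu^T$ pointwise by the one associated with $\eta_3$, and then conclude via the previous step. Writing $u := m/\rho$ and using $\partial_x m = u\,\partial_x \rho + \rho\, \partial_x u$, one expands
\[
\partial_x U_\mu \cdot \nabla^2 \eta_3 \cdot \partial_x U_\mu^T = \frac{p'_\ep(\rho)}{\rho}(\partial_x \rho)^2 + \rho (\partial_x u)^2,
\]
and a direct but crucial computation shows
\[
\partial_x U_\mu \cdot \nabla^2 \eta_4(U_\mu) \cdot \partial_x U_\mu^T = \frac{6u\, p'_\ep(\rho)}{\rho}(\partial_x \rho)^2 + \frac{12\, p_\ep(\rho)}{\rho}\partial_x \rho\, \partial_x u + 6u\rho(\partial_x u)^2,
\]
where the dangerous singular contributions $\sim u^3/\rho$ that arise separately from each of $\partial^2_\rho\eta_4$, $\partial^2_{\rho m}\eta_4$ and $\partial^2_m\eta_4$ cancel exactly in the sum. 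The first and third terms above are bounded by $B\cdot \partial_x U_\mu \cdot \nabla^2\eta_3\cdot\partial_x U_\mu^T$ since $|u|\leq B$ on $\Sigma$, and for the cross term a Cauchy--Schwarz argument reduces the task to checking that the ratio $p_\ep^2(\rho)/(\rho^2 p'_\ep(\rho))$ is bounded uniformly on $[0,A_\ep]$, which follows directly from the explicit form~\eqref{df:pressure-weak} of $p_\ep$. The main obstacle in the whole argument is precisely this algebraic cancellation: it is the specific choice of the cubic combination $\eta_4$ that removes the $u^3/\rho$ singularities, and without it the domination by $\nabla^2 \eta_3$ would fail near vacuum.
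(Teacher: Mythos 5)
Your proposal is correct, and for $i=1,2,3$ it coincides with the paper's argument: the paper also tests \eqref{eq:sys-v-mu} against $\nabla\eta_i(U_\mu)\,\phi$, integrates by parts, and uses the uniform $L^\infty$ bounds \eqref{eq:bound-ei-qi} together with the sign of the quadratic form to conclude (the $t=0$ boundary term you keep is harmless and uniformly bounded). Where you genuinely diverge is $i=4$: the paper applies the very same convexity argument to all four entropies, asserting that each $\eta_i$ is convex so that $\mu\,\partial_x U_\mu\cdot\nabla^2\eta_i\cdot\partial_x U_\mu^T\geq 0$, whereas you dominate the $\eta_4$ quadratic form pointwise by the $\eta_3$ one. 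Your route is in fact the more careful of the two, since $\eta_4$ is not convex ($\partial_{mm}\eta_4 = 6m/\rho^2$ changes sign with $m$), so the paper's sign argument does not literally give the absolute-value bound needed for an $L^1_{loc}$ (or measure) estimate in the case $i=4$. Your computation
\[
\partial_x U_\mu \cdot \nabla^2 \eta_4(U_\mu) \cdot \partial_x U_\mu^T
= \frac{6u\, p'_\ep(\rho)}{\rho}(\partial_x \rho)^2 + \frac{12\, p_\ep(\rho)}{\rho}\,\partial_x \rho\, \partial_x u + 6u\rho\,(\partial_x u)^2
\]
is correct (the $u^3/\rho$ terms do cancel), the bound $|u_\mu|\leq B$ on the invariant region handles the diagonal terms, and the ratio
\[
\frac{p_\ep(\rho)^2}{\rho^2\,p'_\ep(\rho)} = \frac{\ep}{\gamma}\,\frac{\rho^{\gamma-1}}{(1-\rho)^{\gamma-1}}
\]
is indeed bounded on $[0,A_\ep]$ (even uniformly in $\ep$, by \eqref{eq:bound-weak-rho}), so the cross term is absorbed by Young's inequality and the case $i=4$ reduces to $i=3$. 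In short: same duality mechanism as the paper for the convex entropies, plus a pointwise domination step that repairs the only place where the paper's uniform "convexity" claim is loose; what you give up is brevity, what you gain is a proof that controls $\big|\mu\,\partial_x U_\mu\cdot\nabla^2\eta_4\cdot\partial_x U_\mu^T\big|$ rather than only its integral against nonnegative test functions.
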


\begin{proof}
	Testing system (\ref{eq:sys-v-mu}) against $\nabla \eta_i(U_\mu) \phi$ with $\phi \in \mathcal{C}^\infty_c(\R_+ \times \R)$, one gets
	\begin{align*}
	& -\int_{\R^+\times \R} \eta_i(U_\mu) \phi_t \, dx \, dt  -\int_{\R^+\times \R} q_i(U_\mu) \phi_x \, dx \, dt \\
	& = \mu \int_{\R^+\times \R} \eta_i(U_\mu) \phi_{xx} \, dx \, dt - \mu \int_{\R^+\times \R} \big( \partial_x U_\mu .\nabla^2\eta_i(U_\mu) . \partial_x U_\mu^T \big) \phi \, dx \, dt.
	\end{align*}
	Now, recall that $\eta_i$ and $q_i$ are uniformly bounded in $L^\infty$ (see \eqref{eq:bound-ei-qi}) and that $\eta_i$ is convex, so that
	\begin{align*}
	0 
	&\leq  \mu \int_{\R^+\times \R}   (\partial_x U_\mu \cdot  \nabla^2\eta_i(U_\mu) \cdot \partial_x U_\mu^T) \phi \, dx \, dt \\
	&\le \int_{\R^+\times \R} |\eta_i(U_\mu) \phi_t| \, dx \, dt  +\int_{\R^+\times \R} |q_i(U_\mu) \phi_x| \, dx \, dt + \mu \int_{\R^+\times \R} |\eta_i(U_\mu) \phi_{xx}| \, dx \, dt \\
	& \leq C
	\end{align*}
	where $C>0$ is a generic constant independent of $\mu$.
\end{proof}

\bigskip
Let us now pass to the control of the first term of \eqref{eq:split-eq-eta}.
\begin{lemma}\label{lem:cvg-dxa}
	For any $T > 0$ and compact set $K \subset \R$, the following convergences hold
	\begin{equation}
	\mu^2 \int_0^T \int_K \rho_{\mu} (\partial_x u_{\mu})^2 \d x \d t\longrightarrow 0 \quad \text{as} \quad \mu \to 0,
	\end{equation}
	\begin{equation}
	\mu^2 \int_0^T \int_K (\partial_x \rho_{\mu})^2 \d x \d t\longrightarrow 0 \quad \text{as} \quad \mu \to 0.
	\end{equation}
\end{lemma}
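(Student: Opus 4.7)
The plan is to apply Lemma~\ref{lem:control:etai} to the convex physical energy pair $(\eta_3, q_3)$ and combine the resulting $L^1_{\rm loc}$ dissipation bound with the uniform $L^\infty$ bounds from Proposition~\ref{prop:bounds-existence}. Recalling that $\eta_3(\rho, m) = m^2/(2\rho) + H_\ep(\rho)$, where the internal energy $H_\ep$ defined in \eqref{df:energy} satisfies $H''_\ep(\rho) = p'_\ep(\rho)/\rho$, a direct computation of the Hessian together with the change of variable $m=\rho u$ gives the (non-negative) dissipation rate
\[
\partial_x U_\mu^T \cdot \nabla^2 \eta_3(U_\mu) \cdot \partial_x U_\mu
= \rho_\mu (\partial_x u_\mu)^2 + \frac{p'_\ep(\rho_\mu)}{\rho_\mu} (\partial_x\rho_\mu)^2.
\]
Lemma~\ref{lem:control:etai} with $i=3$ will then provide, for any $T>0$ and any compact $K \subset \R$, the uniform-in-$\mu$ estimate
\[
\mu \int_0^T\!\!\int_K \rho_\mu(\partial_x u_\mu)^2 \, \d x\, \d t
+ \mu \int_0^T\!\!\int_K \frac{p'_\ep(\rho_\mu)}{\rho_\mu}(\partial_x\rho_\mu)^2 \, \d x\, \d t \leq C,
\]
with $C=C(\ep, T, K)$ independent of $\mu$.

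The first convergence is then immediate: multiplying by the extra factor $\mu$, $\mu^2 \int_0^T\!\!\int_K \rho_\mu(\partial_x u_\mu)^2 \leq C\mu \to 0$. For the second convergence, my plan is to use the $L^\infty$ upper bound $\rho_\mu \leq A_\ep < 1$ of Proposition~\ref{prop:bounds-existence} to extract a lower bound on $p'_\ep(\rho_\mu)/\rho_\mu$: since $p'_\ep(\rho) = \ep\gamma \rho^{\gamma-1}(1-\rho)^{-\gamma-1}$ and $(1-\rho_\mu)^{-\gamma-1} \geq 1$, we have $p'_\ep(\rho_\mu)/\rho_\mu \geq \ep\gamma \rho_\mu^{\gamma-2}$. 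When $\gamma \in (1,2]$, the same upper bound $\rho_\mu \leq A_\ep$ yields $\rho_\mu^{\gamma-2} \geq A_\ep^{\gamma-2}$, so $p'_\ep(\rho_\mu)/\rho_\mu$ is bounded below by a $\mu$-independent constant, and the desired convergence $\mu^2 \int_0^T\!\!\int_K(\partial_x \rho_\mu)^2 \to 0$ again follows by the extra factor $\mu$.

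The hard part will be the superquadratic range $\gamma \in (2, 3]$, where $\rho_\mu^{\gamma-2}$ degenerates as $\rho_\mu \to 0$ and the previous constant-type lower bound is no longer available. In this regime, the natural remedy is to invoke the (a priori $\mu$-dependent) strictly positive lower bound $\rho_\mu \geq a_\mu$ coming from the parabolic maximum principle in Proposition~\ref{prop:bounds-existence}, which gives $p'_\ep(\rho_\mu)/\rho_\mu \geq \ep \gamma \, a_\mu^{\gamma-2}$ and hence $\mu^2 \int_0^T\!\!\int_K (\partial_x\rho_\mu)^2 \leq C\, \mu\, a_\mu^{2-\gamma}$. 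The real obstacle is therefore to track carefully, via the parabolic estimates behind Proposition~\ref{prop:bounds-existence}, how slowly $a_\mu$ is allowed to vanish with $\mu$, and to check that the balance $\mu \, a_\mu^{2-\gamma}\to 0$ is indeed satisfied as $\mu \to 0$ at fixed $\ep$.
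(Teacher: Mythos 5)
Your reduction to the dissipation term of $(\eta_3,q_3)$, the identity $\partial_x U_\mu^T\cdot\nabla^2\eta_3(U_\mu)\cdot\partial_x U_\mu=\rho_\mu(\partial_x u_\mu)^2+\frac{p'_\ep(\rho_\mu)}{\rho_\mu}(\partial_x\rho_\mu)^2$, and the first convergence are exactly as in the paper, and your direct treatment of $\gamma\in(1,2]$ (where $\rho^{\gamma-2}$ does not degenerate at vacuum) is correct and even bypasses any domain splitting. The genuine gap is the range $\gamma\in(2,3]$: your plan is to use the strictly positive lower bound $\rho_\mu\geq a_\mu$ and to ``track how slowly $a_\mu$ vanishes'' so that $\mu\,a_\mu^{2-\gamma}\to 0$. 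This is not a proof, and it cannot be turned into one in general. The initial density $\rho^0_\ep$ is only assumed nonnegative and may contain vacuum regions; the regularized datum $\rho^0_{\ep,\mu}=\rho^0_\ep\star j_\mu$ then has a lower bound $a^0_\mu$ that can tend to $0$ arbitrarily fast as $\mu\to 0$, and the parabolic lower bound $a_\mu$ in Proposition~\ref{prop:bounds-existence} inherits this degeneracy (and may further decay in time). Neither the invariant region nor the maximum principle gives any quantitative rate relating $a_\mu$ to $\mu$, so the required balance $\mu\,a_\mu^{2-\gamma}\to 0$ is simply unavailable; the estimate $\mu^2\int(\partial_x\rho_\mu)^2\lesssim \mu\,a_\mu^{2-\gamma}$ is therefore inconclusive.

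The paper closes this gap with a different mechanism near vacuum, which requires no positive lower bound on the density. One fixes $\delta\in(0,1)$ and splits the domain into $\{\rho_\mu>\delta\}$ and $\{\rho_\mu\leq\delta\}$. On the first set the pressure coefficient is bounded below by a constant depending on $\delta$ and $\ep$ only, so the dissipation bound gives $\mu^2\int(\partial_x\rho_\mu)^2\lesssim_{\delta}\mu\to 0$ at fixed $\delta$. On the vacuum set one tests the viscous mass equation against $2\rho_\mu\phi$, with $\phi$ a cutoff supported in $\{\rho_\mu\leq\delta\}$; since $|u_\mu|=|m_\mu/\rho_\mu|$ is bounded uniformly, the resulting identity
\begin{align*}
2\mu\int (\partial_x\rho_\mu)^2\phi
&=\int\Big(\rho_\mu^2\partial_t\phi+2\rho_\mu^2u_\mu\partial_x\phi+\mu\rho_\mu^2\partial_{xx}\phi\Big)
+\int 2\rho_\mu u_\mu\,\partial_x\rho_\mu\,\phi
\end{align*}
is bounded by $C\delta^2+C\delta\big(\int(\partial_x\rho_\mu)^2\phi\big)^{1/2}$; after Young's inequality and absorption one gets $\mu^2\int(\partial_x\rho_\mu)^2\phi\leq C\delta^2$ uniformly in $\mu$. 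The smallness here comes from $\rho_\mu$ itself being small near vacuum, not from the pressure. Sending first $\mu\to 0$ and then $\delta\to 0$ concludes. Replacing your $a_\mu$-based step for $\gamma\in(2,3]$ by this argument (or by any equivalent estimate that does not invoke a quantitative lower bound on $\rho_\mu$) is necessary to complete the proof.
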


\begin{proof}
	
	Using Lemma \ref{lem:control:etai}, we have a control of
	\begin{align}\label{eq:eta3-dvp}
	& \mu (\partial_x \rho_{\mu}, \partial_x m_{\mu}) . \nabla^2 \eta_3(\rho_{\mu}, m_{\mu}) . (\partial_x \rho_{\mu}, \partial_x m_{\mu})^T \nonumber\\
	& =\mu \rho_{\mu}^{-1} \Bigg(\dfrac{m_{\mu} \partial_x \rho_{\mu}}{\rho_{\mu}} - \partial_x m_{\mu} \Bigg)^2+\mu \dfrac{p'_\varepsilon(\rho_{\mu})}{\rho_{\mu}} (\partial_x \rho_{\mu})^2 \nonumber \\
	& = \mu \dfrac{p'_\varepsilon(\rho_{\mu})}{\rho_{\mu}} (\partial_x \rho_{\mu})^2
	+ \mu \rho_{\mu} (\partial_x u_{\mu})^2,
	\end{align}
	in $L^1_{\rm loc}(\R_+ \times \R)$.
	The control of the second term directly yields the first convergence result of the Lemma.
	This is not the case for the first term $\mu \dfrac{p'_\varepsilon(\rho_\mu)}{\rho_\mu}(\partial_x \rho_\mu)^2$ (which is controlled in $L^1_{loc}(\R^+ \times \R)$), because of the degeneracy of the pressure close to vacuum.
	Then we decompose the domain by introducing $\delta \in (0,1)$ and defining
	$$ \Omega_1=\{(t,x) \in \R_+ \times \R \, | \, \rho_{\mu}(t,x) >\delta\}.$$
	On the compact set $K_1 \subset \Omega_1$, where $\rho_{\mu}$ is far from $0$, we deduce from the expression of the pressure $p_\ep$ in \eqref{df:pressure} that
	\begin{align*}
	\delta^{\gamma-2} \mu^2 \int_{K_1} (\partial_x \rho_\mu)^2 \, dx \, dt
	\le \mu^2 \int_{K_1}\dfrac{p'_\varepsilon(\rho_{\mu})}{\rho_{\mu}} (\partial_x \rho_{\mu})^2 \, dx \, dt \
	\leq C \mu,
	\end{align*}
	and thus, $\delta > 0$ being fixed,
	\[
	\mu^2 \int_{K_1} (\partial_x \rho_\mu)^2 \, dx \, dt \longrightarrow 0  \quad \text{as}~~ \mu \to 0.
	\]
	On the complementary set $\Omega_1^c = \{(t,x) \in \R_+ \times \R \, | \, \rho_{\mu}(t,x) \leq \delta\} $, we introduce $\phi \in \mathcal{C}_c^\infty(\Omega_1^c)$ such that $0 \leq \phi \le 1$ and $\phi \equiv 1$ on the compact $K_2 \subset \Omega_1^c$. 
	Testing the mass equation of system \eqref{eq:sys-v-mu} against $2 \rho_{\mu} \phi$, one gets
	\begin{align*}
	\int_{\R^+\times \R} & 2\mu (\partial_x \rho_{\mu})^2 \phi \, dx \, dt \\
	& = \int_{\R^+\times \R} \Big(\rho_{\mu}^2 \partial_t\phi + 2 \rho_{\mu}^2 u_{\mu} \partial_x \phi + \mu \rho_{\mu}^2 \partial_{xx} \phi \Big) \, dx \, dt 
	+ \int_{\R^+\times \R}2 \rho_{\mu} u_{\mu} \partial_x \rho_{\mu} \phi \, dx \, dt \\
	& \le C\delta^2 + C\delta \Bigg(\int_{\R^+\times \R} (\partial_x \rho_{\mu})^2\phi \, dx \, dt \Bigg)^\frac{1}{2} \\
	& \leq  C\delta^2 + C \dfrac{\delta^2}{\mu} + \mu \int_{\R^+\times \R} (\partial_x \rho_{\mu})^2\phi \, dx \, dt
	\end{align*}
	where we have used the control in $L^\infty$ of the velocity $u_{\mu} = \frac{m_{\mu}}{\rho_{\mu}}$.
	Multiplying both sides of the inequality by $\mu$ and absorbing the last term of the right-hand side into the left-hand side, we end up with
	\[ 
	\mu^2 \int_{K_2}(\partial_x \rho_{\mu})^2 \, dx \, dt \leq C\delta^2,
	\]
	which achieves the proof of Lemma \ref{lem:cvg-dxa} by finally letting $\delta \to 0$.
\end{proof}

\bigskip
We can now conclude the proof of Proposition~\ref{prop:compact-entropy}.

\begin{proof}[Proof of Proposition \ref{prop:compact-entropy}]
	Coming back to \eqref{eq:split-eq-eta}, we have 
	\begin{align*}
	& \partial_t  \eta_i(U_\mu) + \partial_x q_i(U_\mu) \\
	& = \mu \partial_{xx} \eta_i(U_\mu)-\mu \partial_x U_\mu. \nabla^2 \eta_i(U_\mu) . \partial_x U_\mu ^T\\
	& =: I_1(\mu) + I_2(\mu).
	\end{align*}
	First of all, since $(U_\mu)_\mu$ is bounded in $L^\infty((0,T)\times \R)$, we obtain that 
	\[
	\partial_t  \eta_i(U_\mu) + \partial_x q_i(U_\mu) ~\text{is bounded in} ~ W^{-1, \infty}((0,T) \times \R).
	\]
	According to Lemma \ref{lem:dafermos}, it remains to show that $(I_1(\mu))_\mu$ lies in a compact set of $H^{-1}_{\rm loc}(\R_+ \times \R)$, and $(I_2(\mu))_\mu$ lies in a bounded set of the space of measures $\mathcal{M}_{\rm loc}(\R_+ \times \R)$. 
	The second bound directly derives from Lemma \ref{lem:control:etai}.
	About the control of $I_1$, we notice that 
	\begin{align*}
	|\mu \partial_x \eta_i (\rho_{\mu},m_{\mu})|
	& \leq \mu \Big( |\partial_\rho \eta_i(\rho_{\mu},m_{\mu})| \ |\partial_x \rho_{\mu}| 
	+  |\partial_m \eta_i(\rho_{\mu},m_{\mu})| \ |\partial_x m_{\mu}|  \Big) \\
	& \leq C \mu \Big( |\partial_x \rho_{\mu}| +  |\partial_x m_{\mu}|  \Big) \\
	& \leq C \mu \Big(|\partial_x \rho_{\mu}| + |\sqrt{\rho_{\mu}} \ \partial_x u_{\mu}| \Big)
	\end{align*}
	using the bounds \eqref{eq:bound-d-ei-qi}.
	Thanks to Lemma \ref{lem:cvg-dxa} we ensure that $(I_1(\mu))_\mu$ lies in a compact set of $H^{-1}_{\rm loc}(\R_+ \times \R)$, which achieves the proof of Proposition \ref{prop:compact-entropy}.
\end{proof}

\subsubsection{Strong convergence of $U_\mu$, reduction of the Young measure}
Thanks to the bounds derived in Proposition~\ref{prop:bounds-existence}, we infer the weak-* convergence of a subsequence of $(\rho_{\mu},m_{\mu})_\mu$, to which is associated a family of Young measures $(\nu_{(t,x)})_{t,x}$ such that, for any continuous function $h$, 
\[
h(U_{\mu}) \rightharpoonup \bar{h} \quad \text{with} \quad 
\bar{h}(t,x) = \ <\nu_{(t,x)}, h(U)> \ = \int_{\R^2} h(U) \ \d\nu_{(t,x)}(U). 
\]
Passing to the limit in \eqref{eq:sys-v-mu}, we deduce that system \eqref{eq:sing-Euler} is satisfied in the following sense:
\begin{equation}
\partial_t < \nu_{(t,x)},U> + \ \partial_x <\nu_{(t,x)},f_\ep(U)> \ = 0 \quad \text{in} ~  \mathcal{D}'.
\end{equation}
In addition, we notice that the maximal bound satisfied by $\rho_{\mu}$ implies that the weak limit 
\[
\bar{\rho}(t,x) := \ < \nu_{(t,x)}, \rho>
\]
also satisfies
\begin{equation}
0 \leq \bar{\rho}(t,x) \leq A^\ep < 1 \quad \text{a.e.}.
\end{equation}
We recover a weak solution to \eqref{eq:sing-Euler} as $\mu \to 0$ in the sense of Definition \ref{df:sol-sing-Euler} provided that we are able to prove that $\nu_{(t,x)}$ reduces to a Dirac mass, i.e.
\begin{equation*}
\nu_{(t,x)} = \delta_{U(t,x)} \quad \text{a.e.}.
\end{equation*}
In what follows, for the sake of brevity, we shall omit the dependency on $(t,x)$ and we replace $\nu_{(t,x)}$ with $\nu$.
We also denote
\[
\bar{U} = (\brho,\bm) := (<\nu, \rho >\ , <\nu,m> ).
\]

\bigskip
Let us first introduce the pairs
$(\teta_i,\tq_i)=\big(\teta_i(U,\bar{U}),\tq_i(U,\bar{U})\big)$ defined as:

\medskip
\begin{align*}
(\teta_1,\tq_1) & = (\rho - \brho \ , \ m - \bm), \\
(\teta_2,\tq_2) & = \left(m - \bm \ , \ \frac{m^2}{\rho} - \frac{\bm^2}{\brho} + p_\ep(\rho) - p_\ep(\brho) \right),
\end{align*}
\begin{align*}
(\teta_3,\tq_3) & = \Bigg(
\frac{1}{2}\rho \left( \frac{m}{\rho} - \frac{\bm}{\brho} \right)^2 +
\rho \int^\rho \frac{p_\ep(s)}{s^2}  - \brho \int^{\brho} \frac{p_\ep(s)}{s^2} 
- \left( \int^{\brho} \frac{p_\ep(s)}{s^2}  + \frac{p_\ep(\brho)}{\brho} \right) (\rho - \brho)
\ , \\
& \qquad \frac{1}{2}m \left(\frac{m}{\rho} - \frac{\bm}{\brho}\right)^2
+ \left(\frac{m}{\rho} - \frac{\bm}{\brho}\right)(p_\ep(\rho) - p_\ep(\brho)) \\
& \hspace{2cm}+ \frac{m}{\rho}\left(\rho \int_{\brho}^{\rho}\frac{p_\ep(s)}{s^2} - \frac{p_\ep(\brho)}{\brho}(\rho-\brho) \right)
\Bigg), 
\end{align*}
\begin{align*}
(\teta_4,\tq_4) & = \Bigg( 
6m\int_{\brho}^{\rho}\frac{p_\ep(s)}{s^2} + \rho\left(\frac{m}{\rho} - \frac{\bm}{\brho}\right)^2 \left( \frac{m}{\rho} + 2\frac{\bm}{\brho} \right) 
- 6\frac{\bm}{\brho^2}p_\ep(\brho)(\rho -\brho),\\
& \qquad  6\left(\frac{m^2}{\rho} + p_\ep(\rho)\right) \int_{\brho}^{\rho} \frac{p_\ep(s)}{s^2} 
- 6 \int_{\brho}^{\rho} \left(\frac{p_\ep(s)}{s} \right)^2 
+ 3p_\ep(\rho)\left(\frac{m^2}{\rho^2} - \frac{\bm^2}{\brho^2}\right) \\
& \qquad  + m\left(\frac{m}{\rho} - \frac{\bm}{\brho}\right)^2 \left( \frac{m}{\rho} + 2\frac{\bm}{\brho} \right)
- 6 p_\ep(\brho) \frac{\bm}{\brho^2}(m - \bm)
\Bigg),
\end{align*}
Observe that $(\teta_i,\tq_i)$ are related to the pairs $(\eta_i,q_i)$ through the relations 
\[
\begin{cases}
\teta_i(U,\bar{U}) = \eta_i(U) - \eta_i(\bar{U})\\
\tq_i(U,\bar{U}) = q_i(U) - q_i(\bar{U}) 
\end{cases}
\quad \text{for}~ i=1,2,
\]
\[
\begin{cases}
\teta_i(U,\bar{U}) = \eta_i(U) - \eta_i(\bar{U}) - \nabla \eta_i(\bar{U}) . (U-\bar{U}) \\
\tq_i(U,\bar{U}) = q_i(U) - q_i(\bar{U}) - \nabla \eta_i(\bar{U}) . \big(f_\ep(U)- f_\ep(\bar{U})\big)
\end{cases}\quad \text{for}~ i=3,4,
\]
so that we obtain ``relative'' entropy-entropy flux pairs for system \eqref{eq:sing-Euler} (see for instance \cite{brenier2000}, or Dafermos [Section 5.3, \cite{dafermos2000}]).
\begin{lemma}
	For $i=1, \dots, 4$, $(\teta_i,\tq_i)$ is an entropy-entropy flux pair for system \eqref{eq:sing-Euler} satisfying
	\begin{equation}\label{eq:eta-tilde}
	\partial_t \teta_i(U_{\mu}, \bar{U}) + \partial_x \tq_i(U_{\mu}, \bar{U}) 
	\ \subset ~\text{compact set of}~ H^{-1}_{loc}(\mathbb{R}^+ \times \mathbb{R}).
	\end{equation}
	Moreover, we have the identities
	\begin{align}\label{eq:divcurl-id}
	<\nu , \teta_i \tq_j - \teta_j \tq_i> &  = \ <\nu, \teta_i > \ < \nu, \tq_j> 
	- <\nu,\teta_j> <\nu, \tq_i> \quad \forall \ i,j=1, \dots,4.
	\end{align}
\end{lemma}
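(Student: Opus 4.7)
The statement has two parts: (I) each $(\teta_i,\tq_i)$ is an entropy--entropy flux pair for \eqref{eq:sing-Euler} with $\partial_t\teta_i(U_\mu,\bar U)+\partial_x\tq_i(U_\mu,\bar U)$ compact in $H^{-1}_{\mathrm{loc}}(\R_+\times\R)$, and (II) the four bilinear Young-measure identities \eqref{eq:divcurl-id}. My overall strategy is to reduce both assertions to properties already established for $(\eta_i,q_i)$ by observing that each $\teta_i,\tq_i$ is a linear combination of $\eta_k,q_k$ whose coefficients depend only on $\bar U$ (hence are $\mu$-independent), and then to apply the Div-Curl Lemma. For the compatibility condition $\nabla_U\tq_i=Df_\ep(U)^T\nabla_U\teta_i$, direct computation for $i=3,4$ gives $\nabla_U\teta_i=\nabla\eta_i(U)-\nabla\eta_i(\bar U)$ and $\nabla_U\tq_i=\nabla q_i(U)-Df_\ep(U)^T\nabla\eta_i(\bar U)$, so the compatibility follows by subtracting $Df_\ep(U)^T\nabla\eta_i(\bar U)$ from the known relation $\nabla q_i(U)=Df_\ep(U)^T\nabla\eta_i(U)$; for $i=1,2$ only a constant in $U$ has been subtracted and compatibility is inherited verbatim.

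For the $H^{-1}_{\mathrm{loc}}$-compactness in (I), expand
\begin{align*}
\teta_i(U_\mu,\bar U)&=\eta_i(U_\mu)-\alpha_i(\bar U)\,\rho_\mu-\beta_i(\bar U)\,m_\mu+\gamma_i(\bar U),\\
\tq_i(U_\mu,\bar U)&=q_i(U_\mu)-\alpha_i(\bar U)\,q_1(U_\mu)-\beta_i(\bar U)\,q_2(U_\mu)+\delta_i(\bar U),
\end{align*}
with $\alpha_i=\partial_\rho\eta_i(\bar U)$, $\beta_i=\partial_m\eta_i(\bar U)$ for $i=3,4$ (both vanishing for $i=1,2$). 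Then $\partial_t\teta_i+\partial_x\tq_i$ splits into: (a) $\partial_t\eta_i(U_\mu)+\partial_x q_i(U_\mu)$, compact in $H^{-1}_{\mathrm{loc}}$ by Proposition \ref{prop:compact-entropy}; (b) the mixed terms $\partial_t[\alpha_i(\bar U)\eta_k(U_\mu)]+\partial_x[\alpha_i(\bar U)q_k(U_\mu)]$ for $k=1,2$ and their $\beta_i$-analogues; and (c) the $\mu$-independent remainder $\partial_t\gamma_i(\bar U)+\partial_x\delta_i(\bar U)$, a singleton, hence trivially compact. For (b), my plan is to mollify $\bar U$ in $(t,x)$ to make the coefficients $\alpha_i(\bar U),\beta_i(\bar U)$ smooth, apply the product rule to separate the contribution $\alpha_i(\bar U)[\partial_t\eta_k(U_\mu)+\partial_x q_k(U_\mu)]$ (an $L^\infty$-multiplier acting on a family compact in $H^{-1}_{\mathrm{loc}}$) from the remainder $(\partial_t\alpha_i(\bar U))\eta_k(U_\mu)+(\partial_x\alpha_i(\bar U))q_k(U_\mu)$ (uniformly bounded in $L^1_{\mathrm{loc}}\hookrightarrow\mathcal{M}_{\mathrm{loc}}$). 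Since each mixed term is also bounded in $W^{-1,\infty}_{\mathrm{loc}}$, Murat's Lemma \ref{lem:dafermos} yields $H^{-1}_{\mathrm{loc}}$-compactness uniformly in the mollification, and a passage to the vanishing mollification concludes (I).

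For (II), I would apply the Div-Curl Lemma \ref{lem:div-curl} in the plane $(t,x)$ to the vector fields $G_\mu=(\teta_i(U_\mu,\bar U),\tq_i(U_\mu,\bar U))$ and $H_\mu=(-\tq_j(U_\mu,\bar U),\teta_j(U_\mu,\bar U))$, both bounded in $L^\infty_{\mathrm{loc}}\subset L^2_{\mathrm{loc}}$. By part (I), $\mathrm{div}\,G_\mu=\partial_t\teta_i+\partial_x\tq_i$ and $\mathrm{curl}\,H_\mu=\partial_t\teta_j+\partial_x\tq_j$ are both compact in $H^{-1}_{\mathrm{loc}}$, so $G_\mu\cdot H_\mu=\tq_i\teta_j-\teta_i\tq_j$ converges weakly to $\overline{G_\mu}\cdot\overline{H_\mu}=\overline{\tq_i}\,\overline{\teta_j}-\overline{\teta_i}\,\overline{\tq_j}$; identifying weak-$*$ limits with Young-measure pairings $\overline{\Phi}=\langle\nu,\Phi\rangle$ and rearranging yields \eqref{eq:divcurl-id}. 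The main obstacle is step (b): since $\bar U\in L^\infty$ only, neither the product rule nor the multiplication of $H^{-1}$-distributions by $\alpha_i(\bar U)$ is classically meaningful, so the mollification-plus-Murat device must be executed with care to maintain uniformity in $\mu$ throughout the passage to the vanishing mollification. Once that is settled, the Div-Curl application in (II) is immediate.
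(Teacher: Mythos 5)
Your proposal is correct and follows essentially the same route as the paper: the compactness \eqref{eq:eta-tilde} is reduced to Proposition \ref{prop:compact-entropy} through the affine relations linking $(\teta_i,\tq_i)$ to $(\eta_i,q_i)$ (with the trivial pairs $(\eta_1,q_1),(\eta_2,q_2)$ and constants), and the identities \eqref{eq:divcurl-id} then follow from the Div-Curl Lemma applied to the fields built from the relative pairs, exactly as in the text. The only difference is one of detail, not of method: the paper declares the first step an immediate consequence, whereas you spell out the mollification/commutator argument required because the coefficients $\nabla\eta_i(\bar{U})$ are merely $L^\infty$ in $(t,x)$ — a legitimate and workable filling-in (smooth multiplier bounded on $H^{-1}_{loc}$, Rellich for the gradient terms, commutator small in $H^{-1}_{loc}$ uniformly in $\mu$) rather than a different proof.
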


\begin{proof}
Equations \eqref{eq:eta-tilde} easily derive from Proposition \ref{prop:compact-entropy}. Equipped with \eqref{eq:eta-tilde}, we then apply the Div-Curl lemma \ref{lem:div-curl} to 
\[
G_\mu = (\tq_i(U_\mu), \teta_i(U_\mu)), 
\quad 
H_\mu = (\teta_j(U_\mu), -\tq_j(U_\mu))
\]
observing that
\[
\mathrm{div}_{x,t} G_\mu = \partial_t \teta_i(U_\mu) + \partial_x \tq_i (U_\mu)
\qquad
\mathrm{curl}_{x,t} H_\mu = -\partial_t \teta_j(U_\mu) - \partial_x \tq_j (U_\mu).
\]
\end{proof}

\bigskip
Now, smartly combining the identities \eqref{eq:divcurl-id}, we can show the following lemma.

\begin{lemma}\label{lem:combination}
Consider the pressure
\[
p_\ep(\rho) = \ep \left(\dfrac{\rho}{1-\rho}\right)^\gamma \quad \text{with}~ \gamma \in \ (1, 3].
\]
The following equality holds true
\begin{align} \label{eq:HOHO}
& \ep \dfrac{3-\gamma}{2(\gamma+1)} \dfrac{\brho^{\gamma+1}}{(1-\brho)^\gamma} < \nu, (u -\bu)^4 >
 + \ep^3 \dfrac{\gamma^2(5\gamma +1)}{2(\gamma+1)}  \dfrac{\brho^{3\gamma-5}}{(1-\brho)^{3\gamma+2}} < \nu, (\rho -\brho)^4 >  \nonumber \\
& + \ep^2 \ 6 \gamma  \dfrac{\brho^{2(\gamma-1)}}{(1-\brho)^{2\gamma +1}} \big(< \nu, (u-\bu) (\rho -\brho)> \big)^2  + \ \ER
= 0,
\end{align}
where $\ER$ denotes ``an error term'', whose $L^\infty$ norm is negligible with respect to the norm of the other terms.
\end{lemma}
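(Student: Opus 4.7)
The proof combines the six Div-Curl identities \eqref{eq:divcurl-id} indexed by pairs $(i,j)$ with $1\le i<j\le 4$, following the strategy of Lu [Chapter 8, \cite{lu2002}]. The first observation is that, by construction,
\[
<\nu,\teta_1> \,=\, \brho-\brho = 0, \qquad <\nu,\teta_2> \,=\, \bm-\bm = 0, \qquad <\nu,\tq_1> \,=\, \bm-\bm = 0,
\]
so the right-hand side of \eqref{eq:divcurl-id} vanishes or reduces to a single product whenever the index $1$ or $2$ appears. The identities with $(i,j)=(1,3),(1,4),(2,3),(2,4)$ therefore simplify drastically, while the ones with $(i,j) = (1,2)$ and $(3,4)$ keep their full structure.

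Second, I would pass to the velocity variable: for each occurrence of $m$ in $\teta_i, \tq_i$, use $m=\rho u$ and write $m-\bm = \brho(u-\bu)+\bu(\rho-\brho)+(u-\bu)(\rho-\brho)$. I then Taylor-expand the primitive $\int^{\rho} p_\ep(s)/s^2\,\d s$ and the pressure $p_\ep(\rho)$ around $\brho$ up to third order in $(\rho-\brho)$. Since $\teta_3$ and $\teta_4$ vanish to order two at $U=\bar U$ by construction, these expansions naturally produce polynomials of degree $\le 4$ in the increments $(u-\bu)$ and $(\rho-\brho)$, whose Young-measure moments
\[
<\nu,(u-\bu)^4>, \qquad <\nu,(\rho-\brho)^4>, \qquad \big(<\nu,(u-\bu)(\rho-\brho)>\big)^2
\]
are the quantities that will appear in \eqref{eq:HOHO}.

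The core computational step is to form the specific linear combination of the six simplified identities for which the pure quadratic defect moments $<\nu,(u-\bu)^2>$ and $<\nu,(\rho-\brho)^2>$ (which are not individually controllable) cancel exactly, leaving only the above quartic moments on the left-hand side. The surviving prefactors are then computed using the explicit derivatives of $p_\ep(\rho)=\ep(\rho/(1-\rho))^\gamma$,
\[
p_\ep'(\rho) = \ep\gamma\,\frac{\rho^{\gamma-1}}{(1-\rho)^{\gamma+1}}, \qquad
p_\ep''(\rho) = \ep\gamma\,\rho^{\gamma-2}\,\frac{(\gamma-1)+2\rho}{(1-\rho)^{\gamma+2}},
\]
together with analogous expressions for $p_\ep'''(\brho)$ and for the primitive $\int^{\brho} p_\ep(s)/s^2\,\d s$; this yields the exact coefficients $(3-\gamma)/(2(\gamma+1))$, $\gamma^2(5\gamma+1)/(2(\gamma+1))$ and $6\gamma$, together with the singular powers of $\brho$ and $1-\brho$ displayed in \eqref{eq:HOHO}. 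All cubic and higher-order residuals produced by the Taylor expansion are absorbed into $\ER$; their size is controlled by the uniform bound $\brho\le 1-C\ep^{1/(\gamma-1)}$ of Proposition~\ref{prop:bounds-existence} together with the $L^\infty$ bound on $u=m/\rho$, which guarantee that each remainder carries either one extra power of $(u-\bu)$ or $(\rho-\brho)$ beyond the quartic scale, or one extra factor of $\ep^{1/(\gamma-1)}$ compared to the leading singular prefactors.

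The main obstacle will be the algebraic bookkeeping in the third step: one has to identify the (essentially unique up to normalisation) combination of the six Div-Curl relations eliminating $<\nu,(u-\bu)^2>$ and $<\nu,(\rho-\brho)^2>$ simultaneously, and then verify that the combined coefficient of $<\nu,(u-\bu)^4>$ is proportional to $(3-\gamma)$. The vanishing at the endpoint $\gamma=3$ is a strong \emph{a posteriori} consistency check, and explains why the argument is restricted to $\gamma\in(1,3]$. Because the singular pressure enters through non-polynomial primitives with different negative powers of $(1-\brho)$, it is essential to organise the expansion in powers of $(1-\brho)^{-1}$ and $\ep$ rather than symbolically brute-forcing the algebra.
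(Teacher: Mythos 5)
Your overall route is the paper's route: work with the four relative pairs $(\teta_i,\tq_i)$, exploit $<\nu,\teta_1>\,=\,<\nu,\tq_1>\,=\,<\nu,\teta_2>\,=0$ in the Div--Curl identities \eqref{eq:divcurl-id}, Taylor-expand around $\bar U$ and combine with $\brho$-dependent weights so that only the displayed fourth-order quantities survive. However, your ``core computational step'' is mischaracterized at precisely the point where the lemma is delicate, and as stated it would stall. No linear combination of the six identities eliminates the problematic moments: what the natural combinations produce is (i) a relation from $(1,3)$, $(1,4)$, $(2,3)$ in which the cubic moments $<\nu,(\rho-\brho)^3>$ and $<\nu,(\rho-\brho)(u-\bu)^2>$ cancel (giving \eqref{eq:lem1}), and (ii) a relation from $(3,4)$ (giving \eqref{eq:lem2}); combining these kills the mixed products $<\nu,(\rho-\brho)^2><\nu,(u-\bu)^2>$ and $<\nu,(\rho-\brho)^2(u-\bu)^2>$, but the \emph{squares} $\big(<\nu,(u-\bu)^2>\big)^2$ and $\big(<\nu,(\rho-\brho)^2>\big)^2$ remain, with coefficients proportional to $3-\gamma-4\brho$ and $\gamma-3+4\brho$, which are of indefinite sign (see \eqref{eq:HO}). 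This is exactly where the singular law departs from Lu's isentropic case, where the single leftover negative term could be absorbed by $\big(<\nu,(\rho-\brho)^2>\big)^2\le\,<\nu,(\rho-\brho)^4>$; here that absorption fails for the $u$-term, and the paper's new idea is to invoke the $(1,2)$ identity, which (contrary to your claim that it ``keeps its full structure'') degenerates completely and yields
\begin{equation*}
<\nu,(u-\bu)^2>\;=\;\frac{p'_\ep(\brho)}{\brho^{2}}\,<\nu,(\rho-\brho)^2>\;+\;<\nu,O_3>,
\end{equation*}
i.e.\ \eqref{eq:u2}. Squaring this relation — a nonlinear, not linear, use of the identity — converts $\big(<\nu,(u-\bu)^2>\big)^2$ into $\big(<\nu,(\rho-\brho)^2>\big)^2$, and only then do the two indefinite coefficients sum exactly to zero, leaving \eqref{eq:HOHO}. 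Without this substitution-and-cancellation mechanism your plan has no way to dispose of the squared quadratic moments, so the gap is genuine even though the surrounding scaffolding (vanishing of the first moments, expansions of $\teta_3,\teta_4,\tq_3,\tq_4$ to quadratic order with $O_3$ remainders, $\ER$ collecting fifth-order moments) matches the paper.

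Two smaller points: the surviving term $\big(<\nu,(u-\bu)(\rho-\brho)>\big)^2$ is itself a square of a second-order moment, not a quartic moment, so the statement ``leaving only the quartic moments'' is not what \eqref{eq:HOHO} asserts; and the final coefficients only involve $p_\ep,p'_\ep,p''_\ep$ (no third derivative is needed), since the relevant primitives vanish to second order at $\brho$.
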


We postpone to the Appendix the (quite long and technical) proof of Lemma \ref{lem:combination}.

\medskip
\begin{prop}[Reduction of the Young measure]{\label{prop:strong-cvg-mu}}
	Assume that $ \gamma \in (1,3]$.
	The support of $\nu$ is either confined in $\{\brho = 0\}$ or is reduced to the point $(\brho, \bm)$.
\end{prop}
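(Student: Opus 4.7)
My plan is to read off the conclusion directly from the algebraic identity \eqref{eq:HOHO} produced in Lemma \ref{lem:combination}, which has the schematic structure
\[
A_1\,\langle\nu,(u-\bu)^4\rangle \;+\; A_2\,\langle\nu,(\rho-\brho)^4\rangle \;+\; A_3\,\bigl(\langle\nu,(u-\bu)(\rho-\brho)\rangle\bigr)^2 \;+\; \ER \;=\; 0,
\]
where, for $\gamma \in (1,3]$ and $\brho \in (0,1)$, all three coefficients $A_i = A_i(\brho,\ep,\gamma)$ are non-negative, and $\ER$ is a lower-order error compared to each of the three main quantities. The crucial observation is that the three main contributions are themselves non-negative: two of them are fourth-order moments of $\nu$, and the third is a genuine square.

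I would first dispatch the trivial case $\brho = 0$. Since the $L^\infty$ bound $0\le \rho \le A^\ep < 1$ passes to the Young measure, $\langle \nu,\rho\rangle = 0$ with $\rho\ge 0$ $\nu$-a.e.\ immediately forces $\rho = 0$ on $\mathrm{supp}(\nu)$, which is the first alternative in the statement. In the remaining case $\brho \in (0,A^\ep) \subset (0,1)$, the coefficients $A_2$ and $A_3$ are strictly positive for every $\gamma\in(1,3]$, while $A_1$ is strictly positive for $\gamma \in (1,3)$ and vanishes at $\gamma=3$. Since $\ER$ is negligible with respect to each of the (non-negative) main terms, the identity forces each of them to vanish.

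For $\gamma\in (1,3)$ this closes the argument at once: vanishing of the first term yields $u=\bu$ $\nu$-a.e., vanishing of the second yields $\rho=\brho$ $\nu$-a.e., hence $m=\rho u=\brho\,\bu = \bm$ $\nu$-a.e.\ and $\nu=\delta_{(\brho,\bm)}$. I expect the delicate step to be the endpoint $\gamma = 3$: there the identity only provides $\rho=\brho$ $\nu$-a.e.\ (so the support of $\nu$ collapses onto the vertical line $\{\rho=\brho\}$), while the cross-term $A_3\,(\cdots)^2$ becomes trivially zero once this constraint holds and yields no information on $m$. To rule out oscillations in the momentum on $\{\rho=\brho\}$ I would argue by strict convexity: restricted to this line, the mechanical-energy entropy $\eta_3$ reduces, up to an additive constant, to $m\mapsto m^2/(2\brho)$, and combining the consistency relation $\langle\nu,m\rangle = \bm$ with the entropy identity inherited from the vanishing-viscosity limit should force $m=\bm$ $\nu$-a.e.\ via Jensen's equality case. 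A softer alternative, if needed, would be to recover the $\gamma=3$ case by a continuity/limit argument from the already treated range $\gamma \in (1,3)$.
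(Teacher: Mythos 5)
For $\gamma\in(1,3)$ your argument coincides with the paper's: the coefficients in \eqref{eq:HOHO} are positive on $\{\brho>0\}$, so the two fourth-order moments vanish there, $\nu=\delta_{(\brho,\bm)}$ a.e.\ on $\{\brho>0\}$, and the vacuum alternative on $\{\brho=0\}$ is dispatched as you do. The genuine gap is the endpoint $\gamma=3$, which the statement explicitly covers. Your first fix --- strict convexity of $\eta_3$ restricted to $\{\rho=\brho\}$ combined with ``the entropy identity inherited from the vanishing-viscosity limit'' and Jensen's equality case --- does not go through: what the vanishing-viscosity limit provides is only the distributional entropy inequality and the Tartar commutation relations \eqref{eq:divcurl-id}; nowhere do you have the pointwise equality $\langle\nu,\eta_3\rangle=\eta_3(\brho,\bm)$ that the equality case of Jensen would require, and establishing that equality is essentially equivalent to the strong convergence you are trying to prove. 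Your fallback, a continuity/limit argument in $\gamma$, is not available either: the viscous approximations, the entropy pairs and the Young measure are all built for a fixed $\gamma$, and there is no mechanism transferring the Dirac property from the systems with $\gamma<3$ to the different system at $\gamma=3$.

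What actually closes the case $\gamma=3$ --- and is what the paper does --- is one more commutation relation, the one for the pairs $(\teta_1,\tq_1)$ and $(\teta_2,\tq_2)$: since $\langle\nu,\teta_1\rangle=\langle\nu,\tq_1\rangle=\langle\nu,\teta_2\rangle=0$, it yields $\langle\nu,(p_\ep(\rho)-p_\ep(\brho))(\rho-\brho)-(u-\bu)\brho\rho\rangle=0$, which after Taylor expansion is exactly \eqref{eq:u2}, i.e.\ $\langle\nu,(u-\bu)^2\rangle=\frac{p'_\ep(\brho)}{\brho^2}\langle\nu,(\rho-\brho)^2\rangle+\ER$. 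Once \eqref{eq:HOHO} has killed $\langle\nu,(\rho-\brho)^4\rangle$ (hence, by Cauchy--Schwarz, the second moment as well), this identity forces $\langle\nu,(u-\bu)^2\rangle=0$ and therefore $m=\bm$ $\nu$-a.e. So your instinct that at $\gamma=3$ the support first collapses onto the line $\{\rho=\brho\}$ is correct, but the residual momentum oscillations are ruled out by the div--curl machinery already at hand, not by convexity of $\eta_3$ nor by a limit in $\gamma$; as written, your proof of the endpoint case is incomplete.
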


\medskip
	\begin{proof}
		The result of Lemma \ref{lem:combination} at hand, we observe that \eqref{eq:HOHO} implies, if $\gamma \in \ (1,3)$, that
		\begin{align*}
		C_1(\brho) < \nu, (u -\bu)^4 >
		+ C_2(\brho)  < \nu, (\rho -\brho)^4 > 
		+ \ \ER
		\leq 0,
		\end{align*}	
		where the coefficients $C_1(\brho)$ and $C_2(\brho)$ are positive on the set $\{\brho >  0\}$. 
		Therefore, we have
		\begin{align*}
	& 	< \nu_{(t,x)}, (\rho -\brho)^4 > \ = \int_{\R^2}(\rho-\brho)^4 \ \d \nu_{(t,x)}(\rho,m) = 0  \quad \text{a.e. on} ~ \{(t,x)\, | \, \ \brho(t,x) > 0 \}, \\  
	    &< \nu_{(t,x)}, (u -\bu)^4 > \ = \int_{\R^2}\left(\frac{m}{\rho}- \frac{\bm}{\brho} \right)^4 \d \nu_{(t,x)}(\rho,m) = 0
	     \quad \text{a.e. on} ~ \{(t,x)\,| \, \ \brho(t,x) > 0 \},
		\end{align*}
		from which we deduce that
		\begin{equation}
		\nu_{(t,x)} = \delta_{(\brho(t,x),\bm(t,x))}  \quad \text{a.e. on} ~ \{(t,x)\, | \, \ \brho(t,x) > 0 \},
		\end{equation}
		so that the strong convergence of $(\rho_\mu, m_\mu)$ towards $(\brho, \bm)$ is proven.
		
		\medskip
		If $\gamma=3$, then in \eqref{eq:HOHO} the term
		\[\ep \dfrac{3-\gamma}{2(\gamma+1)} \dfrac{\brho^{\gamma+1}}{(1-\brho)^\gamma} < \nu, (u -\bu)^4 >
		\]
		vanishes.
		However, the strong convergence of $u$ can be recovered in that case from the following equality
		\[
		<\nu,(u-\bu)^2>
		= \frac{p'_\ep(\brho)}{\brho^2} <\nu,(\rho-\brho)^2> + \ER,
		\]
		which is proved in the course of the Appendix and given in \eqref{eq:u2}, and where $\ER$ denotes again a negligible term. The proof is over.
	\end{proof}

In the end, the existence of global-in-time weak solutions at $\ep$ fixed to system \eqref{eq:sing-Euler}-\eqref{eq:sing-press-Euler} as in Definition \ref{df:sol-sing-Euler}   is proved by means of a viscous approximation and the compensated compactness method.

\medskip
\begin{rmk}
Note that the previous estimates, and in particular \eqref{eq:HOHO}, strongly depend on $\ep > 0$ and degenerate as $\ep \to 0$.
As a consequence, a similar compactness argument would not work as $\ep \to 0$. 
\end{rmk}


\section{Analysis of the smooth solutions at $\ep$ fixed}{\label{sec:strong-sol-ep}}
The one-dimensional compressible Euler equations in Lagrangian coordinates read as follows
\begin{subnumcases}{\label{eq:psystem-ep}}
\partial_t v_\ep - \partial_x u_\ep = 0, \label{eq:psystem-ep-mass} \\
\partial_t u_\ep + \partial_x p_\ep(v_\ep) = 0, \label{eq:psystem-ep-mom}
\end{subnumcases}
where $v_\ep=1/\rho_\ep$ is the specific volume and the pressure $p_\ep$ (which is in this section a function of $v$) is given by
\begin{equation}\label{df:pressure}
p_\ep(v) = \dfrac{\ep}{(v-1)^\gamma} + \dfrac{\kappa}{v^{\tgamma}}=:p_{\ep,1}(v)+p_{2}(v),
\end{equation}
where $\kappa >0$, $\gamma > 1$, $\tgamma \in (1,3)$ and $\ep \leq \ep_0$ is a positive small parameter.\\
The characteristic speeds of system \eqref{eq:psystem-ep} are
\begin{equation}\label{def:eigenvalues-2}
\pm c_\ep = \pm \sqrt{-p'_\ep(v_\ep)} = \pm \sqrt{\dfrac{\ep \gamma}{(v_\ep -1)^{\gamma+1}} + \dfrac{\kappa \tgamma}{v_\ep^{\tgamma+1}}}
\end{equation}
and, introducing the quantity
\begin{equation}
\theta_\ep(v) := \int_{v}^\infty c_\ep(\tau) \d\tau ,
\end{equation}
the Riemann invariants of system \eqref{eq:psystem-ep} read
\begin{equation}\label{df:Riemann-inv}
w_\ep = u_\ep + \theta_\ep(v_\ep), \qquad 
z_\ep = u_\ep - \theta_\ep(v_\ep).
\end{equation}

We start by introducing the set of assumptions on the initial data, which are needed throughout this section. 
\subsection{Initial data setup}\label{sub-initial-data}
We gather in this subsection all the initial assumptions that are required in both Sections \ref{sec:strong-sol-ep} and \ref{sec:strong-sol-lim}.

\begin{assumption}
\label{ass-I-setup}
For any $\ep> 0$, $(v^0_\ep,u^0_\ep)$ are $\mathcal{C}^1$ functions and that there exist $M_1,M_2 > 0$, independent of $\ep$, such that
\begin{equation}\label{hyp:init-data-sec}
(v^0_\ep-1)^{\gamma-1} \geq M_1^{-1} \ep , \qquad \|(v^0_\ep, u^0_\ep)\|_{L^\infty(\R)}+ \|(\partial_x v^0_\ep, \partial_x u^0_\ep)\|_{L^\infty(\R)} \leq M_2.
\end{equation}
\end{assumption}

\medskip
As mentioned in the introduction, the first inequality allows us to provide a control of the Riemann invariants $w_\ep$, $z_\ep$.


We assume further conditions on the Riemann invariants at the initial time:

\begin{assumption}
\label{ass-III-setup}
There exist two constant $Y^0, Q^0$ independent of $\ep$ such that
\begin{equation}\label{hyp:dxw}
\sqrt{c_\ep^0} \partial_x w_\ep^0 \leq Y^0 , \qquad \sqrt{c_\ep^0} \partial_x z_\ep^0 \leq Q^0.
\end{equation}
\end{assumption}


\medskip
Finally, anticipating on Section~\ref{sec:strong-sol-lim}, we make the following two assumptions.
\begin{assumption}
\label{ass-IV-setup}
Assume initially that
\begin{equation}\label{hyp:init-data-3}
\left(\dfrac{\ep}{(v_\ep^0 -1)^{\gamma+1}}\right)^{\frac{1}{4}} \ \Big( [\partial_x w_\ep^0]_- + [\partial_x z_\ep^0]_-\Big)
=
\begin{cases}
\mathcal{O}\big(\ep^{\frac{1}{2(\gamma-1)}}\big) \quad & \text{if} \quad \gamma \in (1,3), \\
\mathcal{O}\big(\ep^{\frac{1}{4}}\big) \quad & \text{if} \quad \gamma = 3, \\
\mathcal{O}\big(\ep^{\frac{1}{\gamma+1}}\big) \quad & \text{if} \quad \gamma > 3.
\end{cases}
\end{equation}
where $[f]_- = \max(-f,0)$.
\end{assumption}

\begin{assumption}
\label{ass-II-setup}
There exists a constant value $v_\pm > 1$ independent of $\ep$ so that
\begin{equation}\label{hyp-init-data-B}
\lim_{x_\rightarrow \pm \infty} v^0_\ep(x) = v_\pm.
\end{equation}
In addition, there exist $\ell^*, \underline{v} > 0$, both independent of $\ep$, such that
\begin{equation}\label{hyp:init-data-4}
\dfrac{1}{2\ell} \int_{-\ell}^{\ell} v^0_\ep(x) \, \d x \geq \underline{v} > 1 \qquad \text{for all} \qquad \ell \geq \ell^* .
\end{equation}
\end{assumption}

As it will be clarified in Subsection \ref{sub-existence}, Assumption \ref{ass-IV-setup} is designed so that the maximal time of existence of smooth solutions to \eqref{eq:psystem-ep} can be $\ep$-uniformly bounded from below. 
Assumption~\ref{ass-II-setup} is a technical hypothesis used for providing an $L^1$ local (in time and space) control of the pressure $p_\ep(v_\ep)$ in Subsection \ref{subsect-pressure} (see analogous conditions in~\cite{peza2015} for instance).

\medskip
\begin{rmk}[Consequences of Assumptions~\ref{ass-IV-setup} on $\partial_x u^0_\ep$]
As a direct consequence of \eqref{hyp:init-data-3} and the relation $\partial_x w^0_\ep + \partial_x z^0_\ep = \partial_x u^0_\ep$, we have
\begin{equation} \label{eq:bound-u0-ep}
\left(\dfrac{\ep}{(v_\ep^0 -1)^{\gamma+1}}\right)^{\frac{1}{4}} \  [\partial_x u_\ep^0]_- 
\lesssim
\begin{cases}
~ \ep^{\frac{1}{2(\gamma-1)}} \quad & \text{if} \quad \gamma \in (1,3), \\
~ \ep^{\frac{1}{4}} \quad & \text{if} \quad \gamma = 3, \\
~ \ep^{\frac{1}{\gamma+1}} \quad & \text{if} \quad \gamma > 3.
\end{cases}
\end{equation}
Hence, in the regions close to the congestion constraint, i.e. where $v_\ep^0(x)- 1 = \mathcal{O}(\ep^\alpha)$ with $\alpha > 0$, 
Assumption~\ref{ass-IV-setup} implies
\begin{equation*}
[\partial_x u_\ep^0]_- 
\lesssim
\begin{cases}
~ \ep^{\frac{3-\gamma}{4(\gamma-1)} + \frac{\alpha}{4}(\gamma+1) } \quad & \text{if} \quad \gamma \in (1,3), \\
~ \ep^{\frac{\alpha}{4}(\gamma+1)} \quad & \text{if} \quad \gamma = 3, \\
~ \ep^{\frac{3-\gamma}{4(\gamma+1)} + \frac{\alpha}{4}(\gamma+1)} \quad & \text{if} \quad \gamma > 3,
\end{cases}
\end{equation*}
forcing therefore $[\partial_x u_\ep^0]_-$ to be small in terms of $\ep$ (except in the case $\gamma > 3$).
The previous bounds~\eqref{eq:bound-u0-ep} for $\gamma \leq 3$ lead then to the following condition on the limit initial datum 
\[
\partial_x u^0 \geq 0 \quad \text{a.e. on} \quad \{v^0=1\}.
\]
We recover here a kind of ``compatibility'' condition on the initial datum: one can only ``dilate'' the medium (in the mechanical sense $\Div u \geq 0$) in the congested/saturated regions. 
\end{rmk}

\medskip


\subsection{Invariant regions: lower and upper bounds}{\label{sec:inv-smooth}}

Aiming at obtaining uniform bounds, the next step is to rearrange system \eqref{eq:psystem-ep} in terms of the Riemann invariants \eqref{df:Riemann-inv}, so that 
\begin{subnumcases}{\label{eq:riemann-inv-transp}}
\partial_t w_\ep + c_\ep \partial_x w_\ep = 0, \\
\partial_t z_\ep - c_\ep \partial_x z_\ep = 0.
\end{subnumcases}
It is now an easy task to get an a priori lower bound for the specific volume $v_\ep$ and an upper bound for the velocity $u_\ep$ as follows.

\begin{lemma}{\label{lem:bounds-1}}
Under Assumption \ref{ass-I-setup}, there exists
two positive constants $C_1, C_2>0$ independent of $\ep$, such that
\begin{equation}\label{eq:lower-bound-vep}
v_\ep \geq 1 + C_1\, \ep^{\frac{1}{\gamma-1}}.
\end{equation}
and 
\begin{equation}
\label{eq:u-Linfty}
\|u_\ep\|_{L^\infty_{t,x}} \leq C_2.
\end{equation}
\end{lemma}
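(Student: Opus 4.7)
The plan is to exploit the diagonalised form \eqref{eq:riemann-inv-transp}, which expresses the system as a pair of decoupled transport equations for $w_\ep$ and $z_\ep$ along the characteristics $\dot x = \pm c_\ep$. For any $\mathcal{C}^1$ solution on its interval of existence, the maximum principle along characteristics yields
\[
\|w_\ep(t,\cdot)\|_{L^\infty_x} \leq \|w_\ep^0\|_{L^\infty_x}, \qquad \|z_\ep(t,\cdot)\|_{L^\infty_x} \leq \|z_\ep^0\|_{L^\infty_x}.
\]
It then remains to (i) show that $\|w_\ep^0\|_{L^\infty}$ and $\|z_\ep^0\|_{L^\infty}$ are bounded uniformly in $\ep$, and (ii) convert the resulting bound on $\theta_\ep(v_\ep) = (w_\ep-z_\ep)/2$ into the lower bound on $v_\ep - 1$.

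For step (i), the control of $u_\ep^0$ follows immediately from Assumption \ref{ass-I-setup}, so the issue reduces to estimating $\theta_\ep(v_\ep^0)$. Using the subadditivity of the square root,
\[
c_\ep(\tau) \leq \sqrt{\frac{\ep \gamma}{(\tau-1)^{\gamma+1}}} + \sqrt{\frac{\kappa \tgamma}{\tau^{\tgamma+1}}}.
\]
The contribution of the second term to $\theta_\ep(v_\ep^0)$ gives a convergent integral on $[v_\ep^0,+\infty)$ bounded by a constant independent of $\ep$ (using $\tgamma > 1$ and $v_\ep^0 \geq 1$). The first contribution is explicit:
\[
\int_{v_\ep^0}^{+\infty} \sqrt{\frac{\ep\gamma}{(\tau-1)^{\gamma+1}}} \, \d\tau
= \frac{2\sqrt{\ep\gamma}}{\gamma-1}\,(v_\ep^0 - 1)^{-(\gamma-1)/2}.
\]
By the first inequality in \eqref{hyp:init-data-sec}, $(v_\ep^0-1)^{-(\gamma-1)/2} \leq M_1^{1/2}\,\ep^{-1/2}$, so this term is also bounded independently of $\ep$. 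Hence both $w_\ep^0$ and $z_\ep^0$ are $L^\infty$-bounded uniformly, and the relation $u_\ep = (w_\ep + z_\ep)/2$ immediately yields \eqref{eq:u-Linfty}.

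For step (ii), let $M>0$ be an $\ep$-uniform upper bound on $|(w_\ep - z_\ep)/2|$. For $v$ in a right-neighbourhood of $1$,
\[
\theta_\ep(v) \geq \int_{v}^{2} \sqrt{\frac{\ep\gamma}{(\tau-1)^{\gamma+1}}} \, \d\tau
= \frac{2\sqrt{\ep\gamma}}{\gamma-1}\Bigl[(v-1)^{-(\gamma-1)/2} - 1\Bigr],
\]
so the bound $\theta_\ep(v_\ep) \leq M$ gives
\[
(v_\ep - 1)^{-(\gamma-1)/2} \leq \frac{M(\gamma-1)}{2\sqrt{\ep\gamma}} + 1 \lesssim \ep^{-1/2},
\]
which is equivalent to $v_\ep - 1 \geq C_1 \ep^{1/(\gamma-1)}$, i.e.\ \eqref{eq:lower-bound-vep}.

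The only delicate point is the careful tracking of $\ep$ in step (i): the prefactor $\sqrt{\ep}$ coming from the singular part of $c_\ep$ must be exactly compensated by the bound $(v_\ep^0-1)^{-(\gamma-1)/2} \lesssim \ep^{-1/2}$ provided by Assumption \ref{ass-I-setup}. This is precisely the initial condition \eqref{hyp:init-data-sec} and explains why this assumption is sharp. Once this is secured, the remainder of the argument is a direct consequence of the transport structure \eqref{eq:riemann-inv-transp}.
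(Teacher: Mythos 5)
Your proof is correct and follows essentially the same route as the paper: the Riemann invariants are constant along characteristics, Assumption \ref{ass-I-setup} bounds $w_\ep^0$, $z_\ep^0$ uniformly in $\ep$ (since $\theta_\ep(v_\ep^0)\lesssim \sqrt{\ep}\,(v_\ep^0-1)^{-(\gamma-1)/2}+C$), and the resulting bound on $\theta_\ep(v_\ep)$ is converted into \eqref{eq:lower-bound-vep} while $u_\ep=(w_\ep+z_\ep)/2$ gives \eqref{eq:u-Linfty}. You merely make explicit the computations the paper leaves implicit (the subadditivity split of $c_\ep$ and the exact primitive near $v=1$, where the restriction to $v_\ep\le 2$ is harmless since the bound is trivial otherwise).
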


\bigskip
\begin{proof}
From the definition of the Riemann invariants \eqref{df:Riemann-inv},
\begin{align*}
w_\ep^0 = u_\ep^0+\theta_\ep(v_\ep^0),
\end{align*}
and from Assumption \ref{ass-I-setup},
\begin{align*}
    u_\ep^0 \le M_2, \qquad \theta_\ep(v_\ep^0) \le C(M_1).
\end{align*}
Hence $\|w_\ep^0\|_{L^\infty} \leq M$ where $M$ is independent of $\ep$. 
Observing that $w_\ep$ and $z_\ep$ are constant along the characteristics, it is now classical to show that the domain is invariant
\begin{equation}\label{eq:invariant-region}
\Sigma := \{(v_\ep,u_\ep), ~ w_\ep \leq M, ~ z_\ep \geq - M\}.
\end{equation}
This implies that
\begin{equation}\label{eq:bound-eta}
\theta_\ep(v_\ep) \leq 2M,
\end{equation}
which directly yields the lower bound on $v_\ep$
in \eqref{eq:lower-bound-vep}.
One also has the control of the velocity
\begin{equation}\label{eq:bound-u}
\|u_\ep\|_{L^\infty} \le \|w_\ep\|_{L^\infty} + \|\theta_\ep(v_\ep)\|_{L^\infty} \le 3M, 
\end{equation}
which concludes the proof.
\end{proof}


\subsection{A uniform upper bound on the specific volume}
Now we introduce the following change of variables, due to \cite{chen2017},
\begin{equation}\label{def:Riccati-variables}
y_\ep := \sqrt{c_\ep} \partial_x w_\ep, \qquad q_\ep = := \sqrt{c_\ep} \partial_x z_\ep.
\end{equation}
In terms of the new variables $(y_\ep,q_\ep)$, system \eqref{eq:psystem-ep} read
\begin{subnumcases}{\label{eq:riccati-system}}
\partial_t y_\ep + c_\ep \partial_x y_\ep
 =a_\ep \, y_\ep^2, \label{eq:riccati-yep} \\
\partial_t q_\ep - c_\ep \partial_x q_\ep = - a_\ep \, q_\ep^2,\label{eq:riccati-qep} 
\end{subnumcases}
where 
\begin{align}\label{def:a-ep}
a_\ep = a_\ep(v_\ep)= - \dfrac{c'_\ep(v_\ep)}{2\sqrt{c_\ep(v_\ep)}c_\ep(v_\ep)} = \dfrac{p''_\ep(v_\ep)}{2(-p'_\ep(v_\ep))^{5/4}}.
\end{align}


We provide an $\ep$-uniform upper bound on the specific volume $v_\ep$.
\begin{lemma}[Upper bound on $v_\ep$]\label{lem:upper-bound-vep}
	Let $(v_\ep,u_\ep)$ belonging to  $C^1_{t,x}=\mathcal{C}^1([0,T]\times\R)$ be a solution to system \eqref{eq:psystem-ep} on the time interval $[0,T]$, with initial data satisfying Assumption \ref{ass-I-setup} and Assumption \ref{ass-III-setup}.
	Then, there exist $K= K(\kappa),\bar{Y}, \bar{Q} > 0$, independent of $\ep$, such that
	\begin{equation}\label{eq:bound-v}
	v_\ep(t,x) \leq \Big(v^0_\ep(x)^{\frac{3-\tgamma}{4}} + K(\bar{Y} + \bar{Q}) t \Big)^{\frac{4}{3-\tgamma}}
	\quad \forall \ t \in [0,T].
	\end{equation}
\end{lemma}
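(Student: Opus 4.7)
My plan is to reduce the proof to a scalar, pointwise-in-$x$ ODE inequality for $v_\ep$ driven by the mass equation $\partial_t v_\ep = \partial_x u_\ep$, in which the right-hand side is controlled by a power of $v_\ep$ obtained by combining upper bounds on the Riccati variables $y_\ep, q_\ep$ with the pointwise lower bound on the sound speed $c_\ep$ furnished by the non-singular component $p_2$ of the pressure. The first task is therefore to propagate the one-sided initial bounds of Assumption \ref{ass-III-setup}, namely $y_\ep^0 \le Y^0$ and $q_\ep^0 \le Q^0$, into upper bounds $y_\ep \le \bar Y$ and $q_\ep \le \bar Q$ on $[0,T]$ with $\bar Y, \bar Q$ independent of $\ep$. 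Since $p_\ep$ is strictly convex, $a_\ep \ge 0$, so along a $+c_\ep$-characteristic the Riccati $\dot y_\ep = a_\ep y_\ep^2$ leaves the half-line $\{y_\ep \le 0\}$ forward invariant; in the opposite regime, the estimate is governed by a scalar Riccati comparison driven by $Y^0$. A parallel argument applies to $q_\ep$ along the $-c_\ep$-characteristic.

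Once these bounds are in hand, the identity $u_\ep = \tfrac{1}{2}(w_\ep+z_\ep)$ gives
\[
\partial_x u_\ep \;=\; \frac{y_\ep + q_\ep}{2\sqrt{c_\ep(v_\ep)}},
\]
while the non-singular part of the pressure provides $c_\ep(v)^2 \ge -p_2'(v) = \kappa\tgamma/v^{\tgamma+1}$ and consequently $\sqrt{c_\ep(v_\ep)} \ge (\kappa\tgamma)^{1/4}\, v_\ep^{-(\tgamma+1)/4}$. Combining, the mass equation \eqref{eq:psystem-ep-mass} produces the pointwise bound
\[
\partial_t v_\ep \;=\; \partial_x u_\ep \;\le\; K(\kappa)(\bar Y + \bar Q)\, v_\ep^{(\tgamma+1)/4},
\qquad K(\kappa) = \tfrac{1}{2(\kappa\tgamma)^{1/4}}.
\]
Since $\tgamma \in (1,3)$, this is equivalent to $\partial_t \bigl(v_\ep^{(3-\tgamma)/4}\bigr) \le \tfrac{3-\tgamma}{4} K(\bar Y+\bar Q)$, which integrated on $[0,t]$ for each fixed $x$ yields exactly \eqref{eq:bound-v} (absorbing the factor $(3-\tgamma)/4$ into $K$).

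The delicate point is the first step. The coefficient $a_\ep(v_\ep) = p_\ep''/(2(-p_\ep')^{5/4})$ is \emph{not} bounded uniformly in $\ep$ near the congestion value $v=1$, as a direct computation using the singular component $p_{\ep,1}$ already shows, so a naive constant-coefficient Riccati comparison would not close the estimate $\ep$-uniformly. Making $\bar Y, \bar Q$ independent of $\ep$ on $[0,T]$ exploits the one-sided form of Assumption \ref{ass-III-setup} together with the sign of $a_\ep$: initial nonpositivity of the Riccati variables is preserved along characteristics, so the only regime where the coefficient could trigger blow-up is the one with initially positive gradients, where the scalar Riccati comparison time is made to exceed $T$ on the given existence interval of the smooth solution.
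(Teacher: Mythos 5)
Your reduction of the lemma to the one-sided bounds $y_\ep \le \bar Y$, $q_\ep \le \bar Q$, followed by $\partial_t v_\ep = \partial_x u_\ep = (y_\ep+q_\ep)/(2\sqrt{c_\ep})$, the lower bound $c_\ep \ge \sqrt{\kappa\tgamma}\, v_\ep^{-(\tgamma+1)/2}$ furnished by $p_2$, and the integration of $\partial_t\bigl(v_\ep^{(3-\tgamma)/4}\bigr)$ is exactly the paper's argument; the last two steps are fine. The gap is in the first step, precisely at the point you yourself flag as delicate. Along a forward characteristic with $y_\ep(0,x^*)\in(0,Y^0]$ you propose a comparison with the scalar Riccati equation $\dot Y=aY^2$ and assert that ``the comparison time is made to exceed $T$''. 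Nothing in the hypotheses makes this true: the lemma assumes only Assumption \ref{ass-I-setup} and Assumption \ref{ass-III-setup} on an arbitrary interval of existence $[0,T]$, $Y^0$ is a fixed $\ep$-independent constant with no smallness, and, as you note, $a_\ep$ is of size a negative power of $\ep$ when $v_\ep-1\sim \ep^{1/(\gamma-1)}$ (Lemma \ref{lem:a-ep}). Hence the comparison blow-up time, of order $(\sup a_\ep\, Y^0)^{-1}$, shrinks to $0$ as $\ep\to0$, and even before that time the comparison solution is not dominated by any $\ep$-independent constant. So the regime of initially positive Riccati variables is not handled, and the $\ep$-independence of $\bar Y$, hence of the constant in \eqref{eq:bound-v}, is not established.

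The resolution is that the sign of the Riccati nonlinearity for $y_\ep$ is favorable. Differentiating the transport equation for $w_\ep$ and using $\partial_t v_\ep + c_\ep\partial_x v_\ep=\partial_x z_\ep$, one finds along forward characteristics $\frac{\d}{\d t}y_\ep=\frac{c'_\ep(v_\ep)}{2c_\ep^{3/2}}\,y_\ep^2\le 0$, i.e. the source is $-a_\ep y_\ep^2$ (up to a harmless constant), not $+a_\ep y_\ep^2$: the sign printed in \eqref{eq:riccati-yep} is a typo, as is confirmed by the paper's own solution formula \eqref{eq:yep-sol}, by the symmetric equation \eqref{eq:riccati-qep} for $q_\ep$, and by Theorem \ref{thm:global-ex-ep}, where blow-up (to $-\infty$) is triggered by compressions while rarefactive data are global. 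With the correct sign, $y_\ep$ and $q_\ep$ are non-increasing along their respective characteristics, so $y_\ep\le Y^0$ and $q_\ep\le Q^0$ on $[0,T]$ for every $\ep$, with no comparison with a blowing-up solution, no restriction on $T$, and no use of the size of $a_\ep$; this is what the paper's ``comparison principle for ODEs'' delivers. Taking $\bar Y=\max\{0,Y^0\}$ and $\bar Q=\max\{0,Q^0\}$, the rest of your proof goes through unchanged and coincides with the paper's.
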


\bigskip
\begin{proof}
	By comparison principle for ODEs, we ensure thanks to Assumption \ref{ass-I-setup}-\ref{ass-III-setup} that
	\begin{equation}\label{eq:boundYQ}
	y_\ep(t,x) \leq \bar{Y} = \max \big\{0, Y^0 \big\}, \qquad q_\ep(t,x) \leq \bar{Q} = \max \big\{0, Q^0 \big\}.
	\end{equation}
	Now since
	\begin{align*}
	y_\ep + q_\ep =2 \sqrt{c_\ep} \partial_x u_\ep = 2 \sqrt{c_\ep} \partial_t v_\ep, 
	\end{align*}
	so that, using $c_\ep > \sqrt{\kappa \tgamma} \ (v_\ep)^{-(\tgamma+1)/2}$,
	we deduce that
	\[
	(\kappa\tgamma)^{1/4} \ v_\ep^{-\frac{\tgamma+1}{4}} \partial_t v_\ep
	\leq \dfrac{1}{2}(\bar{Y} + \bar{Q}).
	\]
	Integrating in time, we obtain the desired (time-dependent) upper bound in \eqref{eq:bound-v} using that $\tgamma \in (1,3)$ (see Remark \ref{rmk-tgamma}), and the proof is concluded.
\end{proof}


\subsection{Existence of regular solutions: non-compressive and compressive case}\label{sub-existence}
In this section, we provide an analysis of the smooth solutions to system \eqref{eq:psystem-ep}. Two different situations are identified. 
We rely on Definition \ref{def:noncompressive-case} presented at the beginning.

\begin{thm}\label{thm:global-ex-ep} Under Assumption \ref{ass-I-setup}-\ref{ass-III-setup}, we obtain the following dichotomy result.
\begin{itemize}
	\item If the initial datum is everywhere rarefactive in the sense of Definition \ref{def:noncompressive-case}, then
	 there exists a unique global-in-time $\mathcal{C}^1_{t,x}$ solution $(v_\ep,u_\ep)$, whose $\mathcal{C}^1_{t,x}$-norm is independent of $\ep$.
	\item Otherwise, there exists a unique local $\mathcal{C}^1_{t,x}$ solution $(v_\ep,u_\ep)$ which breaks down in finite time $T^*= T^*(\ep) < + \infty$.   
\end{itemize}
\end{thm}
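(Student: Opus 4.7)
My approach rests on the decoupled Riccati system~\eqref{eq:riccati-system} combined with the classical continuation criterion for quasilinear strictly hyperbolic systems. Since the flux of~\eqref{eq:psystem-ep} is smooth on $\{v>1\}$ and Assumption~\ref{ass-I-setup} provides $\mathcal{C}^1$ initial data bounded away from the congestion threshold, the classical local well-posedness theory yields a unique $\mathcal{C}^1_{t,x}$ solution $(v_\ep,u_\ep)$ on a maximal interval $[0,T^*_\ep)$, together with the continuation criterion: if $T^*_\ep<+\infty$ then either $\inf v_\ep\downarrow 1$ or $\|(\partial_x v_\ep,\partial_x u_\ep)\|_{L^\infty}$ blows up as $t\uparrow T^*_\ep$. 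The lower bound of Lemma~\ref{lem:bounds-1} rules out the first alternative $\ep$-uniformly, so in both branches of the dichotomy only the gradient bound matters. Through the change of unknowns $(y_\ep,q_\ep)=(\sqrt{c_\ep}\,\partial_x w_\ep,\sqrt{c_\ep}\,\partial_x z_\ep)$, and because $c_\ep$ stays bounded above and below as long as $v_\ep$ does (by Lemmas~\ref{lem:bounds-1}--\ref{lem:upper-bound-vep}), everything reduces to tracking $y_\ep$ and $q_\ep$.

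Assume first the initial datum is everywhere rarefactive, i.e.\ $y_\ep^0,q_\ep^0\geq 0$. The key point of~\eqref{eq:riccati-yep}--\eqref{eq:riccati-qep} is that along the forward characteristic $X_+$ (speed $+c_\ep$), $y_\ep$ satisfies a \emph{scalar} Riccati ODE, and symmetrically for $q_\ep$ along the backward characteristic $X_-$; the sign of the nonlinearity is such that the rarefactive choice $y_\ep^0,q_\ep^0\geq 0$ places each ODE on its stable branch. By the ODE comparison principle, $y_\ep(t,X_+(t))$ and $q_\ep(t,X_-(t))$ remain non-negative and pointwise bounded by $\|y_\ep^0\|_{L^\infty}$ and $\|q_\ep^0\|_{L^\infty}$ respectively, for every $t<T^*_\ep$. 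Combined with the $v_\ep$-bounds of Lemmas~\ref{lem:bounds-1}--\ref{lem:upper-bound-vep}, this gives an $\ep$-uniform $L^\infty$ control of $(\partial_x v_\ep,\partial_x u_\ep)$ on any finite time interval, so the continuation criterion forces $T^*_\ep=+\infty$ with a $\mathcal{C}^1_{t,x}$-norm independent of $\ep$.

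Suppose now $y_\ep^0(x^*)<0$ at some $x^*\in\R$ (the case $q_\ep^0(x^*)<0$ is symmetric). Tracing the forward characteristic $X_+$ issued from $(0,x^*)$, the function $t\mapsto y_\ep(t,X_+(t))$ obeys the same scalar Riccati ODE but now on its \emph{unstable} branch, and an explicit integration shows that it must diverge in a time bounded above by $1/(a_\ep^{\min}\,|y_\ep^0(x^*)|)$, where $a_\ep^{\min}$ is a positive lower bound on $a_\ep$ along the characteristic, guaranteed, as long as the solution stays $\mathcal{C}^1$, by the fact that $v_\ep$ remains in a compact subset of $(1,+\infty)$ (Lemmas~\ref{lem:bounds-1}--\ref{lem:upper-bound-vep}). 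The continuation criterion then yields $T^*_\ep<+\infty$. The chief technical subtlety, and the reason the qualitative dichotomy of this theorem must be complemented by the quantitative estimate~\eqref{bound_tep}, is that $a_\ep$ depends singularly on $\ep$ through the factor $(v_\ep-1)^{-(\gamma+1)/4}$ in the congested regime: retaining the exact $\ep$-scaling of the blow-up time is precisely what will enable the singular limit $\ep\to 0$ in Section~\ref{sec:strong-sol-lim}, but it plays no role in the qualitative dichotomy itself.
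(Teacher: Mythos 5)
Your overall strategy coincides with the paper's (local existence plus a continuation criterion, the Riccati unknowns $y_\ep=\sqrt{c_\ep}\,\partial_x w_\ep$, $q_\ep=\sqrt{c_\ep}\,\partial_x z_\ep$ tracked along characteristics, and the invariant-region bounds of Lemmas~\ref{lem:bounds-1} and~\ref{lem:upper-bound-vep}), and your rarefactive branch is essentially the paper's: $0\le y_\ep\le\|y_\ep^0\|_{L^\infty}$, $0\le q_\ep\le\|q_\ep^0\|_{L^\infty}$, and the $\ep$-uniform \emph{lower} bound on $c_\ep$ coming from the non-singular component $\kappa v^{-\tgamma}$ together with the time-dependent upper bound \eqref{eq:bound-v} then yields the $\ep$-uniform gradient control needed to continue the solution. (Incidentally, your ``stable branch'' reading corresponds to the dynamics $y_\ep'=-a_\ep y_\ep^2$ actually used in \eqref{eq:yep-sol}, i.e.\ the sign opposite to the one printed in \eqref{eq:riccati-yep}; that is the consistent choice, so no objection there.)

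The compressive branch, however, contains a genuine gap. You bound the blow-up time by $1/\bigl(a_\ep^{\min}\,|y_\ep(0,x^*)|\bigr)$ with $a_\ep^{\min}>0$ a lower bound for $a_\ep$ along the forward characteristic, ``guaranteed \dots by the fact that $v_\ep$ remains in a compact subset of $(1,+\infty)$.'' No such time-uniform containment is available: the only upper bound on $v_\ep$, Lemma~\ref{lem:upper-bound-vep}, grows like $t^{4/(3-\tgamma)}$, and in the regime where $v_\ep$ is far from $1$ (Case 3 of Lemma~\ref{lem:a-ep}) one only has $a_\ep\gtrsim v_{max}(t)^{(\tgamma-3)/4}\sim (1+t)^{-1}$. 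If the solution were global --- precisely the scenario you must exclude --- $v_\ep$ may grow without bound along the characteristic and $a_\ep$ may decay to $0$, so $\inf_{t<T^*_\ep} a_\ep$ need not be positive, and your time bound degenerates. Nor can the argument be repaired by taking the infimum over $[0,T]$: with $a_\ep\gtrsim(1+t)^{-1}$ the crude estimate $\int_0^T a_\ep\ge T\inf_{[0,T]}a_\ep$ remains \emph{bounded} as $T\to\infty$, so your criterion would only produce breakdown for sufficiently strong initial compressions, not for every compressive datum as the theorem asserts. What is actually needed --- and what the paper proves --- is the divergence of the time integral: by \eqref{eq:yep-sol}, $1/y_\ep(t,x^+_\ep(t))=1/y_\ep(0,x^*)+\int_0^t a_\ep$, and combining the three regimes of Lemma~\ref{lem:a-ep} with \eqref{eq:bound-v} gives $\int_0^T a_\ep(t,x^+_\ep(t))\,\d t\gtrsim\log(1+T)\to+\infty$, so the right-hand side reaches $0$ at a finite time \eqref{def:maximal-time} and $y_\ep\to-\infty$ there. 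This divergence is only logarithmic and relies on $\tgamma\in(1,3)$; it is the genuinely delicate point of the compressive case and cannot be replaced by a pointwise positive lower bound on $a_\ep$.
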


\bigskip

The proof of this theorem relies on the following Lemma.

\medskip
\begin{lemma}\label{lem:a-ep}
Let $\kappa > 0$ be fixed, $\ep \leq \ep_0$ small enough, and consider for $v_\ep > 1$:
\[
a_\ep(v_\ep) = \dfrac{p''_\ep(v_\ep)}{2(-p'_\ep(v_\ep))^{5/4}}
\quad \text{where} \quad p_\ep(v_\ep) = \dfrac{\ep}{(v_\ep-1)^\gamma} + \dfrac{\kappa}{v_\ep^{\tgamma}}=p_{\ep, 1}(v_\ep)+p_{2}(v_\ep), 
\] 
with $\gamma > 1, \ \tgamma \in (1,3).$ We distinguish three main cases:
\begin{enumerate}
	\item Case where $v_\ep-1 = \mathcal{O}(\ep^\alpha)$ with $\dfrac{1}{\gamma+1} \leq \alpha \leq \dfrac{1}{\gamma-1}$.
	There exist two positive constants $K_1, K_2$, independent of $\ep$, such that
	\begin{align}
	K_1 \ep^{-\frac{1}{\gamma+1}} \leq a_\ep(v_\ep) \leq K_2\ep^{-\frac{1}{2(\gamma-1)}} \quad & \text{if} \quad \gamma \in (1,3), \label{eq:a-up-1}\\
	K_1 \ep^{-\frac{1}{4}} \leq a_\ep(v_\ep) \leq K_2\ep^{-\frac{1}{4}} \quad & \text{if} \quad \gamma = 3 ,\label{eq:a-up-2} \\
	K_1 \ep^{-\frac{1}{2(\gamma-1)}} \leq a_\ep(v_\ep) \leq K_2\ep^{-\frac{1}{\gamma+1}} \quad & \text{if} \quad \gamma > 3.\label{eq:a-up-3}
	\end{align}
	\item Case where $v-1 = \mathcal{O}(\ep^\alpha)$ with $\dfrac{1}{\gamma+2} < \alpha < \dfrac{1}{\gamma+1}$. There exist two positive constants $K_1, K_2$, independent of $\ep$, such that
	\begin{align}
	K_1 < K_1 \ep^{1-\alpha(\gamma+2)} \leq a_\ep(v_\ep) \leq K_2\ep^{1-\alpha(\gamma+2)} < K_2 \ep^{-\frac{1}{\gamma+1}};
	\end{align}
	\item Case where $\ep^{\frac{1}{\gamma+2}}\lesssim  v_\ep-1 \leq v_{max}-1$. There exist two positive constants $K_1, K_2$, independent of $\ep$, such that
	\[
	K_1 v_{max}^{\frac{\tgamma-3}{4}} \leq a_\ep(v_\ep) \leq K_2.
	\]
\end{enumerate}
\end{lemma}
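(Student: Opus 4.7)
The proof is essentially a careful asymptotic book-keeping exercise, so the plan is to set up the relevant expressions once and then run the three regimes in parallel. I would first write out the derivatives explicitly:
\begin{align*}
-p'_\ep(v) &= \frac{\gamma\, \ep}{(v-1)^{\gamma+1}} + \frac{\tgamma\,\kappa}{v^{\tgamma+1}}, \\
p''_\ep(v) &= \frac{\gamma(\gamma+1)\,\ep}{(v-1)^{\gamma+2}} + \frac{\tgamma(\tgamma+1)\,\kappa}{v^{\tgamma+2}},
\end{align*}
and observe that in all three cases $v_\ep$ stays bounded in some fixed compact interval of $(1,+\infty)$, so that the polytropic contributions $p_2', p_2''$ are always $\Theta(1)$. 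The two critical thresholds $\alpha=1/(\gamma+1)$ and $\alpha=1/(\gamma+2)$ are exactly the ones that make the singular parts $\ep/(v_\ep-1)^{\gamma+1}$ and $\ep/(v_\ep-1)^{\gamma+2}$ of order $\mathcal{O}(1)$ respectively, so they determine which of the two contributions dominates in $-p'_\ep$ and in $p''_\ep$.

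In case $(1)$, $\alpha\in[1/(\gamma+1),1/(\gamma-1)]$, so both singular contributions dominate and
\[
-p'_\ep(v_\ep) \sim \ep^{1-\alpha(\gamma+1)}, \qquad p''_\ep(v_\ep) \sim \ep^{1-\alpha(\gamma+2)}.
\]
Inserting into the definition of $a_\ep$ gives
\[
a_\ep(v_\ep) \sim \ep^{\frac{-1+\alpha(\gamma-3)}{4}}.
\]
I would then monitor this exponent as $\alpha$ varies: the sign of $\gamma-3$ determines whether the map $\alpha\mapsto\ep^{(-1+\alpha(\gamma-3))/4}$ is decreasing or increasing, which immediately gives the three sub-cases \eqref{eq:a-up-1}-\eqref{eq:a-up-2}-\eqref{eq:a-up-3} by evaluating at the two endpoints. (When $\gamma=3$ the exponent is constant, giving the sharp $\ep^{-1/4}$ bound.)

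In case $(2)$, $1/(\gamma+2)<\alpha<1/(\gamma+1)$, the singular part of $-p'_\ep$ vanishes as $\ep\to 0$ while the singular part of $p''_\ep$ still blows up. Hence $-p'_\ep(v_\ep) = \Theta(1)$, while $p''_\ep(v_\ep)\sim \ep^{1-\alpha(\gamma+2)}$, so $a_\ep(v_\ep) \sim \ep^{1-\alpha(\gamma+2)}$. One then only has to check that $1-\alpha(\gamma+2)$ is negative and bounded between $-1/(\gamma+1)$ and $0$ over the prescribed range of $\alpha$, which yields the bounds of case $(2)$. Finally, in case $(3)$, $v_\ep-1 \gtrsim \ep^{1/(\gamma+2)}$ renders both singular contributions bounded, so $a_\ep$ behaves like the purely polytropic expression $p_2''/(-p_2')^{5/4}$. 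A direct substitution gives $a_\ep\sim v_\ep^{(\tgamma-3)/4}$, and since $\tgamma<3$ the resulting power of $v_\ep$ is decreasing, producing the bounds $K_1 v_{\max}^{(\tgamma-3)/4}\leq a_\ep\leq K_2$.

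No single step is really hard; the only subtle point is keeping track of the interaction between the two contributions across the thresholds $\alpha=1/(\gamma+1)$ and $\alpha=1/(\gamma+2)$ so that the constants $K_1,K_2$ are genuinely uniform in $\ep\leq \ep_0$ (and also uniform in $\alpha$ within each subcase). I would handle this by replacing the heuristic $\sim$ above by explicit two-sided inequalities $c_1 X \leq X+Y \leq c_2 X$ whenever $|Y|\leq c|X|$ in the relevant regime, which gives the stated constants $K_1, K_2$ without any circular dependency.
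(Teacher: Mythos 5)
Your proposal is correct and follows essentially the same route as the paper: in each regime one identifies which component of $p_\ep$ dominates in $p'_\ep$ and $p''_\ep$, substitutes $v_\ep-1=\mathcal{O}(\ep^\alpha)$ to get $a_\ep\sim\ep^{\frac{-1+\alpha(\gamma-3)}{4}}$ in Case 1 (the paper's $\ep^{-\frac14(1+\alpha(3-\gamma))}$), reads off the endpoint exponents according to the sign of $\gamma-3$, and uses $-p'_\ep=\Theta(1)$ with $p''_\ep\sim\ep^{1-\alpha(\gamma+2)}$ in Case 2 and the purely polytropic formula $a\sim v^{(\tgamma-3)/4}$ in Case 3. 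The only cosmetic slip is the phrase ``$v_\ep$ stays in a fixed compact interval of $(1,+\infty)$'' (in Cases 1--2 it approaches $1$), but what you actually use — $1<v_\ep\le v_{max}$, hence $p_2',p_2''=\Theta(1)$ — is exactly right.
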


\bigskip
\begin{proof} The bounds on $a_\ep$ are directly derived from the expression of the pressure, which simplifies according to the considered regime, Case 1, 2 or 3.
In each case, one of the two components of the pressure law is indeed negligible.
\begin{enumerate}
	\item In the first regime, the singular component $p_{\ep, 1}(v_\ep)$ is dominant both in $p''_\ep$, $p'_\ep$:
	\[
	p''_\ep(v_\ep) \underset{\ep \to 0}{\sim} \gamma (\gamma+ 1)\dfrac{\ep}{(v_\ep-1)^{\gamma+2}}, \qquad 
	p'_\ep(v_\ep) \underset{\ep \to 0}{\sim} -\gamma \dfrac{\ep}{(v_\ep-1)^{\gamma+1}};
	\]
	Thus, 
	\[
	a_\ep(v_\ep) \underset{\ep \to 0}{\sim} \ep^{-\frac{1}{4}(1+\alpha (3-\gamma))},
	\]
	which directly provides the bounds of Case 1 using that $\gamma \in (1, 3)$.
	\item In the intermediate regime, since for $\dfrac{1}{\gamma+2} < \alpha < \dfrac{1}{\gamma+1}$
	\[
	p_\ep'(v_\ep)=-\gamma \ep^{1-\alpha(\gamma+1)}-\kappa \tgamma v_\ep^{-(\tgamma+1)} \underset{\ep \to 0}{\sim} -\tgamma \dfrac{\kappa}{v_\ep^{\tgamma+1}}=p'_{ 2}(v_\ep),
	\]

	then the singular component $p_{\ep, 1}(v_\ep)$ is dominant only in $p''_\ep$,
	\[
	p''_\ep(v_\ep) \underset{\ep \to 0}{\sim} \gamma (\gamma+ 1)\dfrac{\ep}{(v_\ep-1)^{\gamma+2}}.
	\]
	\item In the last regime, since $v_\ep$ is ``far'' from $1$, the component $p_{\ep, 1}(v_\ep)$ in negligible in both $p''_\ep$ and $p'_\ep$ as $\ep \to 0$. 
	The bounds on $a_\ep$ are directly derived from the upper and lower bounds on $v_\ep$ in $p_{2}(v_\ep)$, using the fact that $\tgamma \in (1,3)$.
\end{enumerate}
For sake of brevity, we omit further details.
\end{proof}

\bigskip
\begin{proof}[Proof of Theorem \ref{thm:global-ex-ep}]
Under Assumption \ref{ass-I-setup} on the initial data, one can prove by classical arguments (see for instance [Section 7.8, \cite{dafermos2000}] the local existence of a unique $\mathcal{C}^1$ solution $(v_\ep,u_\ep)$.

\medskip

\textit{The rarefactive case}.
We want to extend the previous local solution $(v_\ep,u_\ep)$ by a continuity argument. 
For that purpose, we need to show a priori controls of the $L^\infty$ and Lipschitz norms of $(v_\ep,u_\ep)$.
Let us recall the result of Lemma \ref{lem:bounds-1}: from the bounds on $w_\ep$ and $z_\ep$ (see \eqref{eq:invariant-region}), we infer a control in $L^\infty$ (uniform in $\ep$) on $u_\ep$ as well as a lower bound \eqref{eq:lower-bound-vep} on $v_\ep$.
Hence, it remains to show the control on $(\partial_x v_\ep, \partial_x u_\ep)$.\\
From hypothesis \eqref{hyp:init-compr}, we have $(y_\ep)_{|t=0} \geq 0$, $(q_\ep)_{|t=0} \geq 0$.
We ensure then $y_\ep(t,x) \geq 0$, $q_\ep(t,x) \geq 0$ for all times $t \geq 0$ and, recalling \eqref{eq:boundYQ}, we deduce that
\[
\|y_\ep(t, \cdot)\|_{L^\infty} \leq \bar{Y}, \qquad \|q_\ep(t, \cdot)\|_{L^\infty} \leq \bar{Q}.
\]
Thanks to Lemma~\ref{lem:upper-bound-vep}, we have the following upper bound on $v_\ep$ (recall that $\tgamma \in (1,3)$):
\[
v_\ep(t,x) \leq \Big(v^0_\ep(x)^{\frac{3-\tgamma}{4}} + K(\bar{Y} + \bar{Q}) t \Big)^{\frac{4}{3-\tgamma}}
\]
and therefore
\[
c_\ep(t,x) 
\geq \sqrt{\dfrac{\kappa \tgamma}{v_\ep(t,x)^{\tgamma+1}}} 
\geq \underline{c}(T) \qquad \forall \ t \leq T.
\]
This lower bound on $c_\ep$ allows us to control $\partial_x w_\ep$:
\[
\|\partial_x w_\ep(t,\cdot)\|_{L^\infty} \leq \left\|\frac{y_\ep(t,\cdot)}{\sqrt{c_\ep(t, \cdot)} }\right\|_{L^\infty} \leq \overline{K}(t)\bar{Y} \leq \overline{K}(t),
\]
where, hereafter, $\overline{K}$ denotes a generic function of time which is bounded uniformly w.r.t $\ep$ for any finite time $t$.
Similarly,
\[
\|\partial_x z_\ep(t,\cdot)\|_{L^\infty} \leq  \left\|\frac{q_\ep(t,\cdot)}{\sqrt{c_\ep(t, \cdot)} }\right\|_{L^\infty} \leq \overline{K}(t).
\]
Combining these two bounds then leads to the control of $\partial_x u_\ep$:
\begin{equation}\label{eq:bound-dx-u}
\|\partial_x u_\ep(t,\cdot)\|_{L^\infty} = \|\partial_x (w_\ep + z_\ep)(t,\cdot)\|_{L^\infty} \leq \overline{K}(t).
\end{equation}
Finally, since
\begin{equation*}
\theta'_\ep(v_\ep) \partial_x v_\ep = \partial_x w_\ep - \partial_x u_\ep 
\quad \text{with} \quad 
\theta'_\ep(v_\ep) = - c_\ep(v_\ep)
\end{equation*}
we also control $\partial_x v_\ep$
\begin{equation}\label{eq:bound-dx-v}
\|\partial_x v_\ep(t,\cdot)\|_{L^\infty} \leq \overline{K}(t).
\end{equation}
Now, let us assume that the local solution $(v_\ep,u_\ep)$ admits a finite maximal existence time $T^*<+\infty$.
Since $T^*$ is finite, then $\underline{c}(T^*) > 0$. As a consequence $\overline{K}(T^*) < +\infty$ and the spatial $\mathcal{C}^1_x$-norm of $(v_\ep(T^*, \cdot),u_\ep(T^*, \cdot))$ is controlled, Assumption \ref{ass-I-setup}-\ref{ass-III-setup} are satisfied at time $T^*$.
Applying once again the local existence result starting at time $T^*$, we deduce that there exists $t^*>0$ such that the solution $(v_\ep,u_\ep)$ can be extended on the time interval $[0,T^*+ t^*)$.   
This is in contradiction with the fact that $T^*$ is the maximal time of existence. 
Hence, the solution $(v_\ep,u_\ep)$ exists globally in time and the previous estimates show that its $\mathcal{C}^1_{t,x}$-norm is independent of $\ep$.

\bigskip
\noindent
\textit{The compressive case.}
This is the case where \eqref{hyp:init-compr} is not satisfied, i.e. there exists a point $x^* \in \R$ such that
\[
y_\ep(0,x^*) = \sqrt{c_\ep^0(x^*)} \partial_x w^0_\ep(x^*) < 0 \quad \text{or} \quad q_\ep(0,x^*)= \sqrt{c_\ep^0(x^*)} \partial_x z^0_\ep(x^*) < 0.
\]
Let us consider the case where $y_\ep(0,x^*) < 0$ ($q_\ep(0,x^*) < 0$ is analogous). As a consequence of the Riccati equation \eqref{eq:riccati-yep} one has, as long as the solution exists,
\begin{equation}\label{eq:yep-sol}
\dfrac{1}{y_\ep(t,x^+_\ep(t))} = \dfrac{1}{y_\ep(0,x^*)} + \int_0^t a_\ep(\tau, x^+_\ep(\tau)) \, \d\tau
\end{equation}
where $x^+_\ep$ is the forward characteristic emanating from $x^*$, i.e. 
\[
\dfrac{\d x^+_\ep}{\d t}(t) = c_\ep(t,x^+_\ep), \qquad x^+_\ep(0) = x^*.
\]
This way, the appearance of a singularity in $y_\ep$ essentially depends on the function $a_\ep$, whose asymptotic is detailed in Lemma~\ref{lem:a-ep}.
Since by Lemma \ref{lem:upper-bound-vep},  
\[
v_\ep(t,x) \leq \Big(\|v^0_\ep\|_{L^\infty}^{\frac{3-\tgamma}{4}} + K(\bar{Y} + \bar{Q}) t \Big)^{\frac{4}{3-\tgamma}} =: v_{max}(t),
\]
then we observe that, in all cases,
\begin{equation}
\int_0^{T} a_\ep(t,x_\ep^+(t))\ \d t \longrightarrow + \infty \quad \text{as} \quad T \to +\infty.
\end{equation}
As a consequence, if there exists a point $x^*\in \R$ such that $y_\ep(0,x^*) < 0$, then there exists a finite time $T^*_\ep$ such that
\begin{align}\label{def:maximal-time}
\int_0^{T_\ep^*} a_\ep(t,x_\ep^+(t))\ \d t = - \dfrac{1}{y_\ep(0,x^*)} > 0,
\end{align}
whence $y_\ep(t,x^+_\ep(t)) \to -\infty$ as $t \to T_\ep^*$.\\
Using the same arguments, if there exists $x^*\in \R$ such that $q_\ep(0,x^*) < 0$, one can show the existence of a finite time $T^*_\ep$ such that
\begin{align*}
\int_0^{T_\ep^*} a_\ep(t,x_\ep^-(t))\ \d t = - \dfrac{1}{q_\ep(0,x^*)}
\end{align*}
where $x_\ep^-$ is the backward characteristic emanating from $x^*$, i.e s.t. $(x_\ep^-)'(t) = -c_\ep$ and $x_\ep^-(0)= x^* $.
This achieves the proof of the second part of Theorem \ref{thm:global-ex-ep}.
\end{proof}

\bigskip
\begin{rmk}
\label{rmk-explosion}
From the definitions of $y_\ep, q_\ep$ in \eqref{def:Riccati-variables},
\[
y_\ep = \sqrt{c_\ep} \partial_x w_\ep = \sqrt{c_\ep} (\partial_x u_\ep + \partial_x \theta_\ep(v_\ep)), \quad 
q_\ep = \sqrt{c_\ep} (\partial_x u_\ep - \partial_x \theta_\ep(v_\ep)),
\]
we observe that the explosion of $y_\ep$ (or $q_\ep$) may \emph{a priori} correspond to either the explosion of $c_\ep$ or the explosion of $\partial_x w_\ep$. 
Provided that Assumption \ref{ass-I-setup}-\ref{ass-III-setup} are satisfied, Lemma \ref{lem:upper-bound-vep} with   \eqref{eq:lower-bound-vep} yields that
\begin{equation}\label{eq:upper-bound-cep}
c_\ep(v_\ep) \lesssim \ep^{-\frac{1}{\gamma-1}}.
\end{equation}
Therefore, at $\ep>0$ fixed, when a finite-time singularity occurs in the system, it impacts the spatial derivatives $(\partial_x v_\ep,\partial_x u_\ep)$.
\end{rmk}

\section{Singular limit in the smooth setting}{\label{sec:strong-sol-lim}}
In this section, we aim at justifying the limit in the vanishing $\ep$ parameter of the singular p-system~\eqref{eq:psystem-ep}. 
To this end, since we know from Theorem \ref{thm:global-ex-ep}
that the maximal time of existence of the smooth solution $(u_\ep, v_\ep)$ can be finite at $\ep$ fixed, we need to make sure that it is $\ep$-uniformly bounded from below, and does not shrink to zero as $\ep$ vanishes. This is indeed proved in Proposition \ref{prop:inf-T*}. Later, we employ Assumption \ref{ass-II-setup} (together with Assumption \ref{ass-I-setup}) to obtain a uniform control of the singular pressure. 
Putting all these results together, we finally pass to the limit in Subsection~\ref{sec:epto0}.

\subsection{Lower bound on the maximal existence time}

\begin{prop}\label{prop:inf-T*}
Let Assumptions \ref{ass-I-setup}-\ref{ass-III-setup}-\ref{ass-IV-setup} hold.
Then there exists $T > 0$, independent of $\ep$, such that, for any $\ep < \ep_0$, the smooth solution $(v_\ep,u_\ep)$ provided by Theorem \ref{thm:global-ex-ep} exists on the whole interval $[0,T]$.
\end{prop}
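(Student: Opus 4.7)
The plan is to exploit the explicit integral characterization of the blow-up time arising from the Riccati equations \eqref{eq:riccati-yep}--\eqref{eq:riccati-qep}, together with the uniform bounds on $a_\ep$ provided by Lemma~\ref{lem:a-ep} and Assumption~\ref{ass-IV-setup}. In the rarefactive case Theorem~\ref{thm:global-ex-ep} already yields $T^*_\ep = +\infty$, so I restrict to the compressive case, in which \eqref{eq:yep-sol} (and its backward analog for $q_\ep$) gives
\[
\int_0^{T_\ep^*} a_\ep(t, x_\ep^\pm(t))\,\mathrm{d}t \;=\; \frac{1}{|y_\ep(0,x^*)|} \quad \text{or} \quad \frac{1}{|q_\ep(0,x^*)|},
\]
at a point $x^*$ of initial compression. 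Combining the lower bound $v_\ep \geq 1+C_1\ep^{1/(\gamma-1)}$ from Lemma~\ref{lem:bounds-1} with the upper bound \eqref{eq:bound-v} from Lemma~\ref{lem:upper-bound-vep}, the specific volume $v_\ep(t,x)$ remains in the domain covered by Lemma~\ref{lem:a-ep} for every $t \in [0,T_\ep^*)$, so that $\|a_\ep(t,\cdot)\|_{L^\infty(\mathbb{R})} \leq K\ep^{-\alpha_\gamma}$ uniformly in time, with $\alpha_\gamma$ defined as in \eqref{bound_tep} and $K$ independent of $\ep$. Writing $|y_\ep(0,x^*)| = \sqrt{c_\ep^0(x^*)}\,[-\partial_x w_\ep^0(x^*)]_+$ (and analogously for $q_\ep$), plugging this into the above identity and taking the infimum over $x^*$ produces the estimate \eqref{bound_tep}.

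To upgrade \eqref{bound_tep} into a uniform lower bound $T>0$, I would use the definition of $c_\ep^0$ in \eqref{def:eigenvalues-2} to decompose
\[
\sqrt{c_\ep^0(x)} \;\leq\; \Bigl(\tfrac{\gamma\ep}{(v_\ep^0(x)-1)^{\gamma+1}}\Bigr)^{1/4} + (\kappa\tgamma)^{1/4}\,(v_\ep^0(x))^{-(\tgamma+1)/4}.
\]
When $x^*$ lies in the singular regime (i.e.\ $v_\ep^0(x^*)$ close to $1$) the first summand dominates and, after multiplication by $\max\{[-\partial_x w_\ep^0(x^*)]_+, [-\partial_x z_\ep^0(x^*)]_+\}$, it is exactly controlled by $\ep^{\alpha_\gamma}$ thanks to \eqref{hyp:init-data-3}, so that the corresponding term in \eqref{bound_tep} is $\gtrsim 1$. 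When instead $x^*$ lies in the non-singular regime ($v_\ep^0(x^*)-1 \gtrsim 1$), the decomposition of $\sqrt{c_\ep^0(x^*)}$ reduces to an $\mathcal{O}(1)$ constant and the gradients $[-\partial_x w_\ep^0]_+, [-\partial_x z_\ep^0]_+$ are $L^\infty$-bounded by Assumption~\ref{ass-I-setup}; in parallel, Case~3 of Lemma~\ref{lem:a-ep} provides the improved bound $a_\ep \lesssim 1$, which effectively replaces $\ep^{-\alpha_\gamma}$ by a constant in the Riccati estimate and again yields $T_\ep^* \gtrsim 1$. Patching the two regimes together gives $T_\ep^* \geq T>0$ independently of $\ep\leq \ep_0$.

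The main technical hurdle is precisely this case analysis at the interface between the singular and non-singular regimes of $v_\ep$: the global pessimistic bound $a_\ep \leq K\ep^{-\alpha_\gamma}$ from Lemma~\ref{lem:a-ep} is sharp only when $v_\ep$ is $\mathcal{O}(\ep^{1/(\gamma-1)})$-close to $1$, and Assumption~\ref{ass-IV-setup} provides the matching smallness of $[-\partial_x w_\ep^0]_+ + [-\partial_x z_\ep^0]_+$ only in that same regime. In the non-singular region one must instead exploit the finer regime-dependent estimate for $a_\ep$ in Case~3 to compensate for the fact that $\sqrt{c_\ep^0(x^*)}$ no longer carries an $\ep^{1/4}$ factor. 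Some care is also needed to ensure that the pointwise bound on $a_\ep$ is valid along the entire characteristic $x_\ep^\pm$, which follows from the fact that the previous decomposition holds at every $(t,x)$ via the time-uniform bounds of Lemmas~\ref{lem:bounds-1}--\ref{lem:upper-bound-vep}.
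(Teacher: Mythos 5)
Your overall strategy coincides with the paper's: use the Riccati blow-up identity \eqref{eq:yep-sol}, the regime-dependent bounds on $a_\ep$ from Lemma~\ref{lem:a-ep}, and Assumption~\ref{ass-IV-setup} to make $|y_\ep(0,x^*)|$, $|q_\ep(0,x^*)|$ small enough to compensate the blow-up of $a_\ep$ near congestion. The part of your argument concerning compression points $x^*$ where $v^0_\ep(x^*)$ is close to $1$ is sound and is exactly the paper's ``Case 1 and 2'' step: there $\sqrt{c^0_\ep(x^*)}$ is comparable to $\bigl(\ep\gamma/(v^0_\ep-1)^{\gamma+1}\bigr)^{1/4}$, so \eqref{hyp:init-data-3} gives $|y_\ep(0,x^*)|\lesssim \ep^{\alpha_\gamma}$, matching the global bound $a_\ep\lesssim \ep^{-\alpha_\gamma}$.

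The genuine gap is in your second regime. You claim that when $v^0_\ep(x^*)-1\gtrsim 1$ one may use ``the improved bound $a_\ep\lesssim 1$'' of Case~3 of Lemma~\ref{lem:a-ep} \emph{in the Riccati estimate}, and you justify that the bound holds ``along the entire characteristic'' by invoking the time-uniform bounds of Lemmas~\ref{lem:bounds-1}--\ref{lem:upper-bound-vep}. This does not follow: those lemmas only give $1+C_1\ep^{\frac{1}{\gamma-1}}\leq v_\ep\leq v_{max}(t)$, which leaves $v_\ep(t,x^+_\ep(t))$ entirely free to drift from the far region into the near-congestion regime (indeed $\tfrac{\d}{\d t}v_\ep(t,x^+_\ep(t))=\partial_x z_\ep$ has no a priori lower bound), where $a_\ep\sim\ep^{-\alpha_\gamma}$; since in this regime Assumption~\ref{ass-IV-setup} does \emph{not} force $|y_\ep(0,x^*)|$ to be $\mathcal{O}(\ep^{\alpha_\gamma})$ (the prefactor $(\ep/(v^0_\ep-1)^{\gamma+1})^{1/4}$ is itself small, so the hypothesis is essentially vacuous there), the crude estimate $T^*_\ep\gtrsim \ep^{\alpha_\gamma}/|y_\ep(0,x^*)|$ degenerates as $\ep\to 0$. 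This is precisely the point the paper's proof addresses by a scenario analysis \emph{along the characteristic}: either $v_\ep(t,x^+_\ep(t))$ stays in Case~3 up to $T^*_\ep$, in which case $\sup_{[0,T^*_\ep)}a_\ep\leq K_4$ and \eqref{eq:low-Tep-1} applies, or it reaches the intermediate regime of Case~2 at some $t^*<T^*_\ep$, and then a continuity-in-time argument is used to bound from below the time spent in the region where $a_\ep\leq 2K_2$, hence $T^*_\ep$. Your proposal is missing this step. A related, smaller defect is that your dichotomy ``$v^0_\ep(x^*)$ close to $1$'' versus ``$v^0_\ep(x^*)-1\gtrsim 1$'' is not exhaustive with respect to the thresholds $\ep^{1/(\gamma+1)}$ and $\ep^{1/(\gamma+2)}$ of Lemma~\ref{lem:a-ep} (think of $v^0_\ep(x^*)-1\sim\ep^{\beta}$ with $0<\beta<\frac{1}{\gamma+2}$): the case split must be organized around the regimes of $a_\ep$ evaluated along the characteristic, not only at $t=0$, which is how the paper proceeds.
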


\begin{proof}
In the case where the initial datum  is \emph{everywhere rarefactive}, we know from Theorem \ref{thm:global-ex-ep} that the smooth solution to \eqref{eq:psystem-ep} exists for all times. 
Then we need to handle the \emph{compressive case}. More precisely,
we need to show that we can bound from below, uniformly in $\ep$, the maximal time of existence $T^*_\ep$ when there is some compression in the initial data. 
Let us assume for instance that $y_\ep(0,x^*) < 0$. Then by \eqref{eq:yep-sol} we have at the explosion time $T^*_\ep$ that
\begin{equation}\label{eq:T*ep}
\int_0^{T^*_\ep} a_\ep(t,x_\ep^+(t)) \d t = - \dfrac{1}{y_\ep(0,x^*)}.
\end{equation}
To derive a lower bound on $T^*_\ep$, we need an estimate of $a_\ep(v_\ep)$, which is in fact provided by Lemma \ref{lem:a-ep} and depends on the distance between $v_\ep$ and $1$.
In this regard, Lemma \ref{lem:a-ep} distinguishes three main cases, which we analyse here.

\medskip
\begin{itemize}
\item \textit{Case 3 of Lemma \ref{lem:a-ep}.} We begin with the case where
$v_\ep^0(x^*) \gtrsim 1 + \ep^{\frac{1}{\gamma+2}}$, namely the initial specific volume evaluated at the point $x^*$ is ``far'' from $1$. 
Then, Lemma~\ref{lem:a-ep} ensures the existence a constant $K_2 > 0$, independent of $\ep$ such that
\[
0 < a_\ep(0, x^*) \leq K_2.
\]
On the hand, thanks to Assumption \ref{ass-I-setup},
we also have that
\[
y_\ep(0,x^*) 
= \sqrt{c_\ep^0(x^*)} \ \Big(\partial_x u_\ep^0(x^*) + \theta'_\ep(v_\ep^0(x^*)) \partial_x v_\ep^0(x^*)\Big)
\geq - K_3
\]
for some constant $K_3 > 0$ which is independent of $\ep$.
We have then two possibilities. The first option is that $v_\ep$ remains ``far'' from the congestion constraint (i.e. in Case 3) until the singularity occurs at $T^*_\ep$. In this case, 
$a_\ep$ remains bounded from above uniformly with respect to $\ep>0$ on the whole time interval $[0,T^*_\ep)$, i.e.
\begin{align*}
    0 < a_\ep(v_\ep(t,x)) \le K_4, \quad t \in [0, T_\ep^*),
\end{align*}
with $K_4>0$ independent of $\ep$.
Hence, using again \eqref{eq:T*ep}, we infer the desired $\ep$-uniform lower bound on $T_\ep^*$
\begin{equation}\label{eq:low-Tep-1}
T_\ep^* 
\geq  \dfrac{1}{ \displaystyle \sup_{t \in [0,T^*_\ep)} a_\ep(t,x_\ep^+(t))} \int_0^{T^*_\ep} a_\ep(t,x_\ep^+(t)) \d t 
\geq - \dfrac{1}{K_4} \dfrac{1}{y_\ep(0,x^*)}
\geq  \dfrac{1}{K_4} \dfrac{1}{K_3}.
\end{equation}
The alternative scenario is that, at some time $t^* < T^*_\ep$, $v_\ep$ gets closer to the congestion threshold, passing through the intermediate regime of Case 2. 
This would imply that $v_\ep(t^*,x_\ep^+(t^*)) < 1 + \ep^{\alpha}$ with $\alpha > \frac{1}{\gamma+2}$.
In this case, by continuity of the solution $(v_\ep,u_\ep)$, we can find a positive time $\bar{t} < t^*$ such that
\[
0 < a_\ep(t,x^+_\ep(t)) \leq 2 K_2, \quad \forall \ t \in [0,\bar{t}].
\]
Replacing $T^*_\ep$ by $\bar{t}$ and $K_4$ by $2K_2$ in \eqref{eq:low-Tep-1}, we deduce that
\begin{align*}
    \bar{t} \ge \dfrac{1}{2K_2 K_3},
\end{align*}
namely $\bar{t}$ is bounded from below uniformly with respect to $\ep$, and $T^*_\ep > t^* > \bar{t}$ as well.

\medskip
\item 
\textit{Case 1 and 2 of Lemma \ref{lem:a-ep}.}
Let us now deal with the worst scenario: the case where $v^0_\ep(x^*)$ is close to the congestion constraint, namely
\[
v_\ep^0(x^*) - 1 = \mathcal{O}\big(\ep^{\alpha}\big), \quad \dfrac{1}{\gamma+2} \le \alpha \le \dfrac{1}{\gamma-1}.
\] 

If at some time $\bar{t} < T^*_\ep$, $v_\ep(\bar{t}, x^+_\ep(\bar{t}))$ escapes from this domain, i.e. the specific volume gets away from the congestion constraint, then we are back to the previous case and we bound from below $T^*_\ep$.
So, we assume that 
\[
v_\ep(t, x^+_\ep(t)) - 1 = \mathcal{O}\big(\ep^{\frac{1}{\gamma+1}}\big), \quad \dfrac{1}{\gamma+1} \le \alpha \le \dfrac{1}{\gamma-1}, \quad \forall \ t \in [0, T^*_\ep).
\] 
Let us recall that from Lemma \ref{lem:a-ep}, we have
\begin{align*}
a_\ep(t, x^+_\ep(t)) \leq K_2\ep^{-\frac{1}{2(\gamma-1)}} \quad & \text{if} \quad \gamma \in (1,3), \\
a_\ep(t, x^+_\ep(t)) \leq K_2\ep^{-\frac{1}{4}} \quad & \text{if} \quad \gamma = 3 , \\
a_\ep(t, x^+_\ep(t)) \leq K_2\ep^{-\frac{1}{\gamma+1}} \quad & \text{if} \quad \gamma > 3;
\end{align*}
and thus 
\begin{equation*}
T_\ep^* 
\geq - \dfrac{1}{ \displaystyle \sup_{t \in [0,T^*_\ep[} a_\ep(t,x_\ep^+(t))} \dfrac{1}{y_\ep(0,x^*)}
\geq \begin{cases}
 - \dfrac{\ep^{\frac{1}{2(\gamma-1)}}}{K_2 \ y_\ep(0,x^*)} \quad & \text{if} \quad \gamma \in (1,3), \\
 - \dfrac{\ep^{\frac{1}{4}}}{K_2 \ y_\ep(0,x^*)} \quad & \text{if} \quad \gamma = 3, \\
 - \dfrac{\ep^{\frac{1}{\gamma+1}}}{K_2 \ y_\ep(0,x^*)} \quad & \text{if} \quad \gamma > 3.
\end{cases} 
\end{equation*}
Thanks to Assumption \ref{ass-III-setup}, we guarantee in the three cases that $y_\ep(0,x^*)$ will be small enough to compensate the blow up of $a_\ep$ as $\ep \to 0$, namely
\[
|y_\ep(0,x^*)|
= \sqrt{c_\ep^0(x^*)} |\partial_x w_\ep^0(x^*)|
=
\begin{cases}
\mathcal{O}\big(\ep^{\frac{1}{2(\gamma-1)}}\big) \quad & \text{if} \quad \gamma \in (1,3), \\
\mathcal{O}\big(\ep^{\frac{1}{4}}\big) \quad & \text{if} \quad \gamma = 3, \\
\mathcal{O}\big(\ep^{\frac{1}{\gamma+1}}\big) \quad & \text{if} \quad \gamma > 3.
\end{cases}
\]
Hence, we obtain a lower bound on $T^*_\ep$ which is uniform with respect to $\ep$. Notice that this is the point where Assumption \ref{ass-IV-setup} plays its key role in providing an $\ep$-uniform lower bound on the maximal existence time.

\end{itemize}
\end{proof}

\bigskip
From now on, we shall consider the time interval $[0,T]$ on which the whole sequence of solutions $(v_\ep,u_\ep)_\ep$ exists, $T$ being independent of $\ep$.

\subsection{Control of the pressure}\label{subsect-pressure}
We have previously proved in Lemma \ref{lem:bounds-1} that $v_\ep$ was bounded from below (cf~\eqref{eq:lower-bound-vep}):
\[
v_\ep \geq 1 + C_1 \ep^{\frac{1}{\gamma-1}}.
\]
Unfortunately, this bound does not provide any control on the pressure $p_\ep(v_\ep)$ as $\ep \to 0$ since it only yields the inequality
\[
p_\ep(v_\ep) 
\leq \dfrac{\ep}{\big(1+C_1\ep^{\frac{1}{\gamma-1}} -1\big)^{\gamma}}
\lesssim \ep^{-\frac{1}{\gamma-1}}.
\] 
The goal of this section is to prove a uniform control of $\|p_\ep(v_\ep)\|_{L^1_{loc}}$.

\begin{prop}
Let Assumption \ref{ass-I-setup}-\ref{ass-II-setup} hold.
Then there exists a positive constant $C$, independent of $\ep$, such that
\begin{equation}
\|p_\ep(v_\ep)\|_{L^1((0,T) \times (-L,L))} \leq C \qquad \forall \ L > 0.
\end{equation} 
\end{prop}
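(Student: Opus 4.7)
\medskip
\noindent\textbf{Proof strategy.} Since $v_\ep \geq 1$, the non-singular component satisfies $\kappa/v_\ep^{\tgamma} \leq \kappa$, which is trivially bounded in $L^1((0,T)\times(-L,L))$. The whole difficulty is therefore to control the singular contribution $p_{\ep,1}(v_\ep) = \ep/(v_\ep-1)^\gamma$, for which the pointwise lower bound $v_\ep-1 \gtrsim \ep^{1/(\gamma-1)}$ from Lemma \ref{lem:bounds-1} is not enough (it only yields the useless bound $p_{\ep,1}(v_\ep) \lesssim \ep^{-1/(\gamma-1)}$). The plan is to run a duality argument in the spirit of the pressure estimates used for compressible Navier–Stokes with maximal density constraint (as in~\cite{peza2015}), by testing the momentum equation \eqref{eq:psystem-ep-mom} against a carefully designed test function which is essentially the one-dimensional Bogovskii antiderivative of $v_\ep$ minus its local average.

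\medskip
Assume first $L \geq \ell^*$ (for $L < \ell^*$ monotonicity of the integral reduces to this case). Pick a nonnegative cutoff $\varphi \in \mathcal{C}^\infty_c(\R)$ with $\varphi\equiv 1$ on $[-L,L]$, $\varphi \leq 1$ everywhere, and $\mathrm{supp}\,\varphi \subset [-L-\eta, L+\eta]$ for a small $\eta>0$. Set
\[
a_\ep(t) := \frac{\int_{\R} \varphi\, v_\ep(t,\cdot)\, \d x}{\int_{\R} \varphi\, \d x}, \qquad \Psi_\ep(t,x) := \int_{-\infty}^x \varphi(y)\big(v_\ep(t,y) - a_\ep(t)\big)\, \d y.
\]
By construction $\int_\R \varphi(v_\ep-a_\ep)\d x = 0$, so $\Psi_\ep$ is compactly supported in $x$, and $\partial_x \Psi_\ep = \varphi(v_\ep - a_\ep)$. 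Testing \eqref{eq:psystem-ep-mom} against $\Psi_\ep$ and integrating by parts in both variables gives the identity
\[
\int_0^T\!\!\int_\R p_\ep(v_\ep)\,\varphi\,(v_\ep - a_\ep)\,\d x\,\d t
= \Big[\!\int_\R u_\ep \Psi_\ep\,\d x\Big]_0^T - \int_0^T\!\!\int_\R u_\ep\, \partial_t \Psi_\ep\,\d x\,\d t.
\]
The right-hand side is bounded uniformly in $\ep$: $\|u_\ep\|_{L^\infty}$ is controlled by Lemma~\ref{lem:bounds-1}, $\Psi_\ep$ is $L^\infty$ because $v_\ep$ is, and $\partial_t \Psi_\ep$ is expressed (after injecting $\partial_t v_\ep = \partial_x u_\ep$ from \eqref{eq:psystem-ep-mass} and integrating by parts in $y$) purely in terms of $u_\ep$ and $\varphi'$, hence also bounded.

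\medskip
The next step is to split $v_\ep - a_\ep = (v_\ep-1) - (a_\ep-1)$. The contribution of $v_\ep - 1$ is harmless: indeed
\[
p_{\ep,1}(v_\ep)(v_\ep-1) = \frac{\ep}{(v_\ep-1)^{\gamma-1}} = (\gamma-1)\, H_\ep(v_\ep),
\]
which is controlled uniformly thanks to the internal-energy bound recalled in Remark~\ref{rmk:initial}, and $p_2(v_\ep)(v_\ep-1) \lesssim 1$. To turn the remaining term into an upper bound on $\int\!\int p_\ep(v_\ep)\varphi$, we must show that $a_\ep(t) - 1 \geq \delta > 0$ uniformly in $\ep$ and in $t\in[0,T]$. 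This is where Assumption~\ref{ass-II-setup} enters. Integrating \eqref{eq:psystem-ep-mass} in space yields
\[
\int_{-L}^L v_\ep(t,x)\,\d x = \int_{-L}^L v_\ep^0(x)\,\d x + \int_0^t [u_\ep(s,L) - u_\ep(s,-L)]\,\d s \geq 2L\underline{v} - 2Ct,
\]
so that, for $L \geq \ell^*$ and provided $T$ is chosen small enough (possibly shrinking the time of existence given by Proposition~\ref{prop:inf-T*}), one has $\int_{-L}^L v_\ep(t,\cdot) \geq 2L\underline{v}'$ with $\underline{v}' > 1$. Choosing $\eta$ small compared to $\ell^*(\underline{v}'-1)$ yields $a_\ep(t) \geq \underline{v}'/(1+\eta/\ell^*) \geq 1+\delta$ for some $\delta > 0$ independent of $\ep$.

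\medskip
Combining these ingredients,
\[
\delta \int_0^T\!\!\int_\R \varphi\, p_\ep(v_\ep)\,\d x\,\d t
\leq \int_0^T\!\!\int_\R \varphi\, p_\ep(v_\ep)(v_\ep-1)\,\d x\,\d t - \int_0^T\!\!\int_\R \varphi\,p_\ep(v_\ep)(v_\ep - a_\ep)\,\d x\,\d t \leq C,
\]
and since $\varphi \equiv 1$ on $[-L,L]$ the desired bound follows. The main technical obstacle is checking the uniform strict positivity $a_\ep(t)-1 \geq \delta$: it forces a restriction on the time horizon coming from the transport of $v_\ep$ by $u_\ep$, and it is precisely here that the non-fully-congested initial condition~\eqref{hyp:init-data-4} is essential—without it, $v_\ep$ could approach $1$ on average and the argument would degenerate as $\ep\to 0$.
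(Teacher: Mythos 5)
Your proposal is essentially the paper's own argument: a duality estimate obtained by testing the momentum equation \eqref{eq:psystem-ep-mom} against a compactly supported antiderivative of $v_\ep$ minus its local average, with Assumption \ref{ass-II-setup} guaranteeing that this average stays strictly above $1$; the identity you derive is the same as the one the paper gets with its piecewise-defined test function $\phi$. Two small differences. First, to conclude you write $v_\ep-a_\ep=(v_\ep-1)-(a_\ep-1)$ and absorb $p_{\ep,1}(v_\ep)(v_\ep-1)=\ep/(v_\ep-1)^{\gamma-1}$ using the uniform bound coming from the lower bound $v_\ep-1\gtrsim\ep^{1/(\gamma-1)}$; note that this bound for all times is Lemma \ref{lem:bounds-1} (propagation via the Riemann invariants), not Remark \ref{rmk:initial}, which only concerns the initial datum. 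The paper instead splits the spatial integral according to whether $v_\ep$ is larger or smaller than $\frac{3+\underline{v}}{4}$; both routes are valid. Second, and this is the only point you should repair: to keep the average above $1$ you propose to shrink $T$, whereas the statement is for the fixed, $\ep$-independent existence time of Proposition \ref{prop:inf-T*}. The correct move (the paper's) is to keep $T$ and instead prove the bound first for $L\geq L^*:=\max\{\ell^*,\,2C_2T/(\underline v-1)\}$, so that the drift term $C_2T/L$ cannot push the average below $\frac{\underline v+1}{2}$, and then recover all smaller $L$ by monotonicity of the $L^1$ norm of the nonnegative pressure; your own computation already contains everything needed for this, so the fix is immediate.
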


\bigskip

\begin{proof}


Thanks to Assumption \ref{ass-II-setup}, there exists two positive constants $\ell^*>0$ and $\underline{v}> 1$, independent of $\ep$, such that
\begin{align}\label{eq:average-v0}
< v_\ep^0 > \ := \ \dfrac{1}{2\ell} \int_{-\ell}^\ell v_\ep^0(x) \, \d x \geq \underline{v} > 1 \qquad \forall \ \ell \geq \ell^*.
\end{align}
From the first equation of system \eqref{eq:psystem-ep}, we infer that
\begin{align}\label{eq:average-v}
<v_\ep(t)> \ 
:= \ \dfrac{1}{2\ell} \int_{-\ell}^\ell v_\ep(t,x) \, \d x
=\ <v_\ep^0> + \dfrac{1}{2\ell}\int_0^t(u_\ep(s,\ell)-u_\ep(s,-\ell))\, \d s.
\end{align}
From the $L^\infty_x$ bound on $u_\ep$ provided by \eqref{eq:u-Linfty} of Lemma \ref{lem:bounds-1}, we have
\[
u_\ep(t, \ell)-u_\ep(t, -\ell) 
\geq - 2 \|u_\ep\|_{L^\infty} = - 2C_2 \quad \forall \ t \in [0,T],
\]
so that
\begin{align}\label{eq:bound-average}
<v_\ep(t)> &= \ <v_\ep^0>+ \dfrac{1}{2\ell}\int_0^t u_\ep(s, \ell)-u_\ep(s, -\ell) \, \d s \notag\\
& \ge \; <v_\ep^0> - \dfrac{C_2 t}{\ell} \notag\\
& \ge \underline{v}- \dfrac{C_2 t}{\ell}. \notag\\
\end{align}
Choosing now
\begin{equation}\label{df:L*}
\ell \geq \max \left\{ \ell^*, \dfrac{2C_2T}{\underline{v} -1}\right\}:= L^*,
\end{equation}
for all $t \in [0,T]$ one has that
\begin{align} \label{eq:low-<v>}
<v_\ep(t)> 
& \geq \underline{v} - \dfrac{C_2 T}{\ell} \geq \underline{v} + \dfrac{1 - \underline{v}}{2}  \geq \dfrac{\underline{v}+1}{2} > 1.
\end{align}

%
%
%

\bigskip
In order to control of the pressure, let us define the function
\[
\phi(t,x) 
= \begin{cases}
\displaystyle \dfrac{(x+L)}{2L} \int_{-L}^{L} v_\ep(t,z)\d z -  \int_{-L}^{x} v_\ep(t,z)\d z
\qquad &  \text{if} \quad x \in [-L, L], \\
0 \quad &  \text{otherwise},
\end{cases}
\]
for some fixed $L \geq L^*$ where $L^*$ has been introduced in \eqref{df:L*} and is independent of $\ep$.
Since $v_\ep$ is smooth, then $\phi \in \mathcal{C}^1_c([0,T]\times \R)$.
Now, we multiply the momentum equation in \eqref{eq:psystem-ep} (which holds point-wisely) by $\phi$ and we integrate in space and time. This yields
\[
\int_0^T \int_{-L}^{L} p_\ep(v_\ep) \partial_x \phi \, \d x \d t
= - \int_0^T \int_{-L}^{L} u_\ep \partial_t \phi \, \d x \d t 
+ \int_{-L}^{L} u_\ep^0(x) \phi(0,x) \, \d x .
\]
In the right-hand side first we have
\begin{align*}
& \int_0^T \int_{-L}^{L} u_\ep(t,x) \partial_t \phi(t,x) \d x \d t \\
& = \int_0^T \int_{-L}^{L} u_\ep(t,x) \Bigg[\dfrac{(x+L)}{2L} \int_{-L}^{L}\partial_t v_\ep(t,z)\d z -  \int_{-L}^{x} \partial_t v_\ep(t,z)\d z \Bigg] \d x \d t \\
& = \int_0^T \int_{-L}^{L} u_\ep(t,x) \Bigg[\dfrac{(x+L)}{2L} \int_{-L}^{L}\partial_x u_\ep(t,z)\d z -  \int_{-L}^{x} \partial_x u_\ep(t,z)\d z \Bigg] \d x \d t \\
& = \int_0^T \int_{-L}^{L} u_\ep(t,x) \Bigg[\dfrac{(x+L)}{2L} \Big(u_\ep(t,L) - u_\ep(t,- L)\Big) -  \Big(u_\ep(t,x) - u_\ep(t,- L)\Big) \Bigg] \d x \d t.
\end{align*}
Hence
\begin{align*}
 \left|\int_0^T \int_{-L}^{L} u_\ep(t,x) \partial_t \phi(t,x) \d x \d t\right|
& \leq C(T,L) \|u_\ep\|_{L^\infty}.
\end{align*}
The next term is 
\begin{align*}
\int_{-L}^{L} u_\ep^0(x) \phi(0,x) \d x
& = \int_{-L}^{L} u_\ep^0(x) \Bigg[\dfrac{(x+L)}{2L} \int_{-L}^{L} v_\ep^0(z)\d z -  \int_{-L}^{x} v_\ep^0(z)\d z \Bigg] \, \d x,
\end{align*}
which is controlled as follows
\begin{align*}
\left|\int_{-L}^{L} u_\ep^0(x) \phi(0,x) \d x \right|
& \leq C(L) \|u_\ep^0\|_{L^\infty} \|v_\ep^0\|_{L^\infty}.
\end{align*}
Now, we split in two parts the integral involving the pressure as 
\begin{align*}
\int_0^T \int_{-L}^{L} p_\ep(v_\ep) \partial_x \phi \, \d x \d t 
& = \int_0^T \int_{-L}^{L} p_\ep(v_\ep) \partial_x \phi \, \mathbf{1}_{\{v_\ep > \frac{3+ \underline{v}}{4}\}} \, \d x \d t \\
& \quad + \int_0^T \int_{-L}^{L} p_\ep(v_\ep) \partial_x \phi \, \mathbf{1}_{\{v_\ep \leq \frac{3+ \underline{v}}{4}\}} \, \d x \d t.
\end{align*}

Since $\underline{v}>1$ uniformly in $\ep$ and
\[
1 < \frac{3+ \underline{v}}{4} = \dfrac{1 + \frac{\underline{v}+1}{2}}{2} < \frac{\underline{v}+1}{2} < \underline{v},
\]
then the pressure $p_\ep(v_\ep)$ remains bounded on the set $\{v_\ep > \frac{3+ \underline{v}}{4}\}$, so providing a positive constant $C$, independent of $\ep$, such that
\[
\left|\int_0^T \int_{-L}^{L} p_\ep(v_\ep) \partial_x \phi \, \mathbf{1}_{\{v_\ep > \frac{3+ \underline{v}}{4}\}} \, \d x \d t \right|
\leq C.
\]
Therefore, using \eqref{eq:low-<v>} and the fact that $L\ge L^*$ in \eqref{df:L*}, we have 
\begin{align*}
C & \geq \left|\int_0^T \int_{-L}^{L} p_\ep(v_\ep) \partial_x \phi \, \mathbf{1}_{\{v_\ep \leq \frac{3+ \underline{v}}{4}\}} \, \d x \d t \right| \\
& =  \left|\int_0^T \int_{-L}^{L} p_\ep(v_\ep) \left(\frac{1}{2L} \int_{-L}^{L} v_\ep(t,z) \, \d z - v_\ep(t,x)\right) \, \mathbf{1}_{\{v_\ep \leq \frac{3+ \underline{v}}{4}\}} \, \d x \d t \right| \\
& \geq \int_0^T \int_{-L}^{L} p_\ep(v_\ep) \left(\frac{\underline{v}+1}{2} - \frac{3+ \underline{v}}{4}\right) \, \mathbf{1}_{\{v_\ep \leq \frac{3+ \underline{v}}{4}\}} \, \d x \d t \\
& \geq  \dfrac{ \underline{v} - 1}{4} \int_0^T \int_{-L}^{L} p_\ep(v_\ep) \mathbf{1}_{\{v_\ep \leq \frac{3+ \underline{v}}{4}\}} \d x \d t.
\end{align*}
Thus we also obtain the control of the integral of the singular pressure in the region close to the singularity.
In the end, we proved that
\[
(p_\ep(v_\ep))_\ep ~\text{is bounded in}~L^1((0,T)\times(-L,L)) \quad \text{for all}~ L \geq L^*,
\]
and thus
\[
(p_\ep(v_\ep))_\ep ~\text{is bounded in}~L^1((0,T)\times(-L,L)) \quad \text{for all}~ L > 0.
\]
\end{proof}


\subsection{Passing to the limit as $\ep \rightarrow 0$} {\label{sec:epto0}}
In this section, we prove Theorem \ref{thm:main-smooth} under Assumptions \ref{ass-I-setup}
-\ref{ass-II-setup}.
On the time interval $[0,T]$, the solution $(v_\ep,u_\ep)$ exists and is regular.
More precisely, the previous sections have shown that there exists a constant $C$, independent of $\ep$ such that 
\begin{align*}
& \|u_\ep\|_{L^\infty((0,T) \times \R)} \le C,\\
& \|v_\ep\|_{L^\infty((0,T) \times \R)} \le C, \\
& \|\partial_x u_\ep\|_{L^\infty((0,T) \times \R)} \le C, \\
& \|\partial_x v_\ep\|_{L^\infty((0,T) \times \R)} \le C, \\
& \|p_\ep(v_\ep)\|_{L^1((0,T) \times (-L,L))} \le C, \quad \forall \ L > 0.
\end{align*}
From these bounds, we infer the existence of a pair $(v,u)$ such that
\begin{align*}
v_\ep \rightharpoonup v, \quad & \text{weakly-* in} \quad L^\infty((0,T);W^{1,\infty}(\R)), 
\\
u_\ep \rightharpoonup u \quad &\text{weakly-* in} \quad L^\infty((0,T);W^{1,\infty}(\R)).
\end{align*}
Since the lower bound \eqref{eq:lower-bound-vep} holds at $\ep$ fixed thanks to Lemma \ref{lem:bounds-1}, then the congestion constraint involving the specific volume is satisfied in the limit, i.e.
\begin{equation}
v(t,x) \geq 1 \quad \text{a.e.} ~ (t,x) \in (0,T) \times \R.
\end{equation}
Employing the bound on $\partial_x u_\ep$ in \eqref{eq:bound-dx-v} in the mass equation, \eqref{eq:psystem-ep-mass}, we uniformly control the time derivative of $v_\ep$ as follows
\[
\|\partial_t v_\ep\|_{L^\infty((0,T)\times \R)} = \|\partial_x u_\ep\|_{L^\infty((0,T)\times \R)} \leq C.
\]
This way, it is now an easy task to apply the classical Aubin-Lions Lemma to get
\begin{equation}\label{eq:conv-v}
v_\ep \rightarrow v \quad \text{in    }  C([0,T]\times [-L,L]), \quad \forall \ L > 0.
\end{equation}

\medskip
Applying the same reasoning to the velocity $u_\ep$, and using in particular the control of the $L^1_{t,x}$-norm of the pressure, we obtain
\[
\|\partial_t u_\ep\|_{L^1((0,T); W^{-1,1}_{loc}(\R))} \leq C.
\]
A generalization of the Aubin-Lions Lemma 
due to Simon, see \cite{simon1986}, provides the convergence
\begin{equation}\label{eq:conv-u}
u_\ep \rightarrow u \quad \text{in    }  L^q((0,T); \mathcal{C}([-L,L])), \quad \forall  \ q \in [1, +\infty), \ L > 0.
\end{equation}
%
%
As the pressure is made of two parts
\[
p_\ep(v_\ep) = \dfrac{\ep}{(v_\ep -1)^\gamma} + \dfrac{\kappa}{v_\ep^{\tgamma}} = p_{\ep, 1}(v_\ep) + p_{\ep, 2}(v_\ep),
\]
the strong convergence of the non-singular component $p_{\ep,2}(v_\ep)$ directly follows from \eqref{eq:conv-v}, so that
\[
p_{\ep, 2}(v_\ep) \rightarrow p_2(v) = \dfrac{\kappa}{v^{\tgamma}} \quad \text{in} \quad   C([0,T], W^{s,\infty}_{loc}(\R)), \quad \forall \ 0 < s < 1.
\]
About the singular component $p_{\ep,1}(v_\ep)$, we use the $L^1$ uniform bound to infer that
\begin{equation}
p_{\ep,1}(v_\ep) \rightharpoonup p \quad \text{in} \quad \mathcal{M}_+((0,T) \times (-L,L)) \quad \forall \ L >0.
\end{equation}
Finally, to recover the exclusion constraint, we pass to the limit in the equality
\[
(v_\ep - 1)p_{\ep,1}(v_\ep)
= \dfrac{\ep}{(v_\ep-1)^{\gamma-1}}.
\]
On the one hand $(v_\ep-1)$ converges in $\mathcal{C}([0,T]\times [-L,L])$ to $v-1$, so that the left-hand side of the above equality converges in the sense of distribution towards $(v-1)p$.
While the right-hand side converges strongly to $0$ in $L^{\frac{\gamma}{\gamma-1}}((0,T) \times (-L,L))$ thanks to the following inequality (and the uniform $L^1$ bound on the pressure),
\[
\dfrac{\ep}{(v_\ep-1)^{\gamma-1}}  
\leq \ep^{\frac{1}{\gamma}} \left( \dfrac{\ep}{(v_\ep -1)^\gamma} \right)^{\frac{\gamma-1}{\gamma}} = \ep^{\frac{1}{\gamma}} \left( p^1_\ep(v_\ep) \right)^{\frac{\gamma-1}{\gamma}}.
\]
Hence, we get the desired exclusion constraint
\begin{equation}
(v-1) \ p = 0,
\end{equation}
which finally allows to show that $(v,u,p)$ is a weak solution of the free-congested Euler equations. 

\medskip
As a final remark, note that we able to show that the incompressibility constraint is satisfied by the limit velocity $u$ in the congested domain where $v=1$.
We have indeed the following lemma whose proof is postponed to the Appendix
\begin{lemma}
Let $T>0$, $v\in W^{1, \infty}((0,T) \times \R)$ and $u \in L^\infty((0,T); W^{1,\infty}(\R))$ satisfying
\begin{equation*}
\partial_t v = \partial_x u , \quad v_{|t=0} = v^0 \quad a.e.
\end{equation*}
The following two assertions are equivalent:
\begin{enumerate}
    \item $v(t,x) \geq 1$ for all $(t,x) \in [0,T] \times \R$;
    \item $\partial_x u = 0$ a.e. on $\{v \leq 1\}$ and $v^0 \geq 1$.
\end{enumerate}
\end{lemma}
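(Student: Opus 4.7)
The strategy is to treat the two implications separately and exploit the Stampacchia lemma together with the chain rule for composition of Lipschitz/Sobolev functions.

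\textbf{Step 1: Implication (1) $\Rightarrow$ (2).} The condition $v^0 \geq 1$ follows immediately from (1) by evaluation at $t=0$. Since $v \geq 1$ everywhere, we have $\{v \leq 1\} = \{v = 1\}$. I invoke the Stampacchia (or De Giorgi) lemma: if $v \in W^{1,\infty}((0,T)\times \R)$, then $\nabla_{t,x} v = 0$ almost everywhere on every level set $\{v = c\}$. In particular, $\partial_t v = 0$ a.e. on $\{v=1\}$. Combined with the relation $\partial_t v = \partial_x u$, this yields $\partial_x u = 0$ a.e. on $\{v \leq 1\}$, which is exactly (2).

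\textbf{Step 2: Implication (2) $\Rightarrow$ (1).} This is the direction that requires a little more work. I would introduce the function
\[
w(t,x) := (1 - v(t,x))_+ = \max(1 - v(t,x), 0).
\]
Since $v \in W^{1,\infty}((0,T)\times\R)$ and $s \mapsto \max(1-s, 0)$ is Lipschitz, $w \in W^{1,\infty}((0,T)\times\R)$ as well. The chain rule for Lipschitz composition of Sobolev functions gives
\[
\partial_t w = -\partial_t v \, \mathbf{1}_{\{v < 1\}} \quad \text{a.e. in } (0,T)\times\R.
\]
Using $\partial_t v = \partial_x u$ a.e. and the assumption $\partial_x u = 0$ a.e.\ on $\{v \leq 1\} \supseteq \{v < 1\}$, I conclude $\partial_t w = 0$ a.e. Since $w \in W^{1,\infty}$, this means $w(t,\cdot) = w(0,\cdot)$ for a.e.\ $t$; by continuity (a $W^{1,\infty}$ function has a continuous representative), the equality holds for every $t \in [0,T]$. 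But $w(0,x) = (1-v^0(x))_+ = 0$ thanks to $v^0 \geq 1$. Hence $w \equiv 0$, i.e. $v \geq 1$ on $[0,T]\times \R$.

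\textbf{Main technical point.} The only delicate ingredients are (a) the Stampacchia zero-gradient-on-level-sets result, which is standard for Sobolev functions, and (b) the chain rule $\partial_t (1-v)_+ = -\partial_t v \, \mathbf{1}_{\{v<1\}}$, which is classical for Lipschitz truncations of Sobolev functions (see, e.g., \cite{evans1990}). Both are applied to time-space derivatives with $v \in W^{1,\infty}((0,T)\times \R)$, so no further regularization is needed. Once these are in hand, the rest of the argument is essentially a one-line Gronwall-type conclusion.
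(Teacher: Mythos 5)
Your argument is correct, and it is worth comparing the two directions separately. For $(2)\Rightarrow(1)$ you follow essentially the same route as the paper: the paper works with $d=v-1$ and a regularization $b_\delta$ of $d\mapsto [d]_-$, multiplies the equation by $b'_\delta(d)$ and lets $\delta\to 0$ to obtain $\partial_t [v-1]_- = 0$ a.e.; your direct appeal to the Lipschitz chain rule $\partial_t(1-v)_+=-\partial_t v\,\mathbf{1}_{\{v<1\}}$ simply packages that regularization step into a standard citation, and the conclusion via constancy in time of $(1-v)_+$ and the initial condition $v^0\ge 1$ is identical. For $(1)\Rightarrow(2)$ your route is genuinely different: the paper renormalizes with $b(v)=v^{-k}$, observes that $(\partial_t v^{-k})_k$ is bounded in $W^{-1,\infty}((0,T);L^\infty(\R))$, and passes to the limit $k\to\infty$ to identify $\mathbf{1}_{\{v=1\}}\partial_x u=0$ from the weak limit of $-k\,v^{-(k+1)}\partial_x u$, in the spirit of renormalization arguments for congestion/incompressible limits; you instead invoke the Stampacchia property that the weak gradient of a $W^{1,1}_{loc}$ (here $W^{1,\infty}$) function vanishes a.e.\ on each level set, so that $\partial_t v=0$ a.e.\ on $\{v=1\}=\{v\le 1\}$ and hence $\partial_x u=\partial_t v=0$ there. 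This is shorter and more elementary, at the price of quoting the level-set lemma; the paper's renormalization argument is self-contained and generalizes more readily to situations with less regularity on $v$. One cosmetic remark: the final step is just constancy of a Lipschitz function with vanishing a.e.\ time derivative (via Fubini in $x$), not really a Gronwall argument.
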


\medskip

\section{Appendix}
This Appendix is dedicated to the proof of several technical results.

\subsection{Invariant region for the viscous system}{\label{sec:inv-reg}}

Let us first recall some definitions from Dafermos book \cite{dafermos2000}.

\begin{df}[Invariant region]
	A closed subset $\Sigma \in \mathbb{R}^2$ is called a (positively) invariant region for the local solution of (\ref{eq:sys-v-mu}) defined on $[0,\tau)$, if for any initial data $U_0$ such that $U^0(x)\in\Sigma$ for all $x\in \R$, it holds that $U(t,x) \in \Sigma$ for all $t \in [0, \tau)$, $x \in \R$.
\end{df}

\begin{df}\label{def:quasi-convex-funct2}
	Let $\mathcal{U}$ be a convex subset of $\R^2$. 
	A function $G: \mathcal{U} \rightarrow \mathbb{R}$ is \emph{quasi-convex} if, for any $U, V \in \mathcal{U}$ and $\nu \in [0,1]$, we have:
	$$G(\nu U + (1-\nu) V) \le \max \{ G(U), G(V) \}.$$
\end{df}

In other words, a function $G$ is called \emph{quasi-convex} if, for any $c \in \mathbb{R}$, its sublevel sets $\{G(U) \le c\}$ are convex. Notice, indeed, that a function whose sublevel sets are convex sets may fail to be a convex function.

The following theorem is proved in \cites{smoller2012, chueh1977}.
\begin{thm}\label{thm:invariant-region}
	Define $\Sigma=\cap_{i=1}^m \{U \in \mathbb{R}^2 \, : \; G_i(U) \le 0\}$ for a finite collection of smooth functions $G_i(U)$, $i=1, \dots ,m$. 
	Suppose that for any boundary point $U_{\partial \Sigma} \in \partial \Sigma$, denoting by $dG_i$ the derivative of $G_i$ at the boundary point $U_{\partial \Sigma}$, the following conditions are satisfied:
	\begin{itemize}
		\item  $dG_i$ is a left eigenvector of $A=df_\ep$ for $i=1, \cdots, m$;
		\item  $G_i$ is quasi-convex at $U_{\partial \Sigma}$.
	\end{itemize}
	Then, $\Sigma$ is a positively invariant region for (\ref{eq:sys-v-mu}).
\end{thm}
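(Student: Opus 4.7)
The plan is to show that each scalar quantity $G_i(U_\mu(t,x))$ satisfies a scalar parabolic inequality to which a maximum principle applies, and then combine these to conclude that $U_\mu(t,x)$ stays in $\Sigma$.

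First, I would derive the evolution equation for $G_i(U_\mu)$. Applying $dG_i(U_\mu)$ to the viscous system \eqref{eq:sys-v-mu} and using the chain rule gives
\begin{equation*}
\partial_t G_i(U_\mu) + dG_i(U_\mu)\cdot df_\ep(U_\mu)\cdot \partial_x U_\mu
= \mu\,\partial_{xx} G_i(U_\mu) - \mu\,(\partial_x U_\mu)^T\cdot d^2G_i(U_\mu)\cdot \partial_x U_\mu.
\end{equation*}
At any point where $U_\mu\in\partial\Sigma$, the first hypothesis says $dG_i$ is a left eigenvector of $A=df_\ep$, say with eigenvalue $\lambda_i$, so that $dG_i\cdot df_\ep\cdot \partial_x U_\mu=\lambda_i\,dG_i\cdot\partial_x U_\mu=\lambda_i\,\partial_x G_i(U_\mu)$. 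Thus, on $\partial\Sigma$,
\begin{equation*}
\partial_t G_i(U_\mu) + \lambda_i\,\partial_x G_i(U_\mu) - \mu\,\partial_{xx} G_i(U_\mu)
= -\mu\,(\partial_x U_\mu)^T\cdot d^2G_i(U_\mu)\cdot \partial_x U_\mu .
\end{equation*}

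Next I would apply a maximum principle argument to conclude that $G_i(U_\mu(t,\cdot))\leq 0$ for all $t\in[0,\tau)$. The idea is: assume by contradiction that the supremum of $G_i(U_\mu)$ becomes strictly positive and that this supremum is attained (or nearly attained, up to a localization/approximation by $G_i(U_\mu)-\delta t$) at some interior point $(t^*,x^*)$. At such a point, standard calculus conditions give $\partial_t G_i\geq 0$, $\partial_x G_i=0$, $\partial_{xx}G_i\leq 0$. The condition $\partial_x G_i(U_\mu)=dG_i(U_\mu)\cdot\partial_x U_\mu=0$ means that $\partial_x U_\mu$ lies in the tangent space to the level set $\{G_i=\text{const}\}$ at $U_\mu(t^*,x^*)\in\partial\Sigma$. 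The quasi-convexity hypothesis at $U_{\partial\Sigma}$ precisely translates into the positive semi-definiteness of $d^2 G_i$ restricted to this tangent space, so that $(\partial_x U_\mu)^T\cdot d^2 G_i\cdot\partial_x U_\mu \geq 0$. Plugging these signs into the displayed equation forces $\partial_t G_i(U_\mu)\leq 0$ at $(t^*,x^*)$, contradicting the definition of $(t^*,x^*)$ as a location where $G_i$ has just crossed zero from below.

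Finally I would cover two routine but essential technicalities. First, to deal with unbounded $x$-domain and to upgrade the weak sign inequality coming from quasi-convexity into a strict one, I would work with the auxiliary function $\tilde G_i^\delta(t,x):=G_i(U_\mu(t,x))-\delta t-\delta\psi(x)$, where $\psi$ is a smooth coercive weight (e.g.\ $\psi(x)=\sqrt{1+x^2}$) compatible with the $L^\infty$ bounds on $U_\mu$ and its derivatives guaranteed by Proposition \ref{prop:bounds-existence}; this ensures the supremum is attained and that the maximum is \emph{strictly} positive, yielding a genuine contradiction. Then I would let $\delta\to 0$. Second, intersecting the results for $i=1,\dots,m$ immediately yields $U_\mu(t,\cdot)\in\Sigma$. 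The main obstacle is handling the quasi-convexity condition cleanly: quasi-convexity gives only $(\partial_x U_\mu)^T\cdot d^2G_i\cdot\partial_x U_\mu\geq 0$ on the tangent space (not strict positivity), so the $\delta$-regularization above is essential to convert the boundary equation into a strict parabolic inequality on which the maximum principle bites.
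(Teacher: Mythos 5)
First, note that the paper does not prove Theorem \ref{thm:invariant-region} at all: it is quoted verbatim from the literature and justified by the citation to \cite{smoller2012} and \cite{chueh1977}, so there is no in-paper argument to compare against. Your sketch reproduces the standard maximum-principle mechanism behind that classical result (evolution equation for $G_i(U_\mu)$, left-eigenvector property killing the convective term at a spatial extremum, quasi-convexity giving $(\partial_x U_\mu)^T d^2G_i\,\partial_x U_\mu\ge 0$ on the tangent space), and that core computation is correct.

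However, the way you close the degenerate case has a genuine gap. With the penalization $\tilde G_i^\delta=G_i(U_\mu)-\delta t-\delta\psi(x)$, a positive interior maximum of $\tilde G_i^\delta$ occurs at a point where $G_i(U_\mu)\ge \delta t^*+\delta\psi(x^*)$ and in fact $G_i(U_\mu)\gtrsim\sigma>0$, where $\sigma$ is the assumed positive supremum of $G_i(U_\mu)$; such a point lies \emph{strictly outside} $\Sigma$, uniformly away from $\partial\Sigma$. But both hypotheses of the theorem (that $dG_i$ is a left eigenvector of $df_\ep$, and quasi-convexity) are assumed only at boundary points $U_{\partial\Sigma}\in\partial\Sigma$, so neither can be invoked where your contradiction is supposed to happen. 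In addition, at that maximum $\partial_x G_i(U_\mu)=\delta\psi'(x^*)\neq 0$, so $\partial_x U_\mu$ is not tangent to the level set and quasi-convexity (even if it were available there) yields only an $O(\delta)$-perturbed sign, which needs quantitative control of $\partial_x U_\mu$; invoking Proposition \ref{prop:bounds-existence} for that is circular, since those bounds are themselves deduced from Proposition \ref{prop:invariant}, i.e.\ from the very theorem being proved (local-in-time bounds for the smooth local solution on $[0,\tau)$ would be the correct input). The standard repairs keep the contact point \emph{on} $\partial\Sigma$ (first-touching-time argument) and resolve the non-strictness by perturbing the \emph{equation} or the hypotheses rather than the function: one proves invariance first under strictly inward-pointing conditions (or for a modified system with a small inward drift) and then passes to the limit using continuous dependence at fixed $\mu>0$; this is precisely the route of Chueh--Conley--Smoller and of Smoller's book, which is why the paper simply cites \cite{smoller2012, chueh1977}. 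As written, your $\delta$-trick does not implement this and the maximum-principle step does not close.
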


We present here the main idea behind Proposition \ref{prop:invariant}, before providing its proof below.

\bigskip
\begin{rmk}
	Multiplying Equations \eqref{eq:viscous} by $(\partial_\rho w, \partial_m w)$, one can check that $(w,z)$ satisfy
	\begin{align*}
	\partial_t w + \lambda_2 \partial_x w 
	\leq \mu \partial_{xx} w + 2\mu \dfrac{\partial_x\rho_{\mu}}{\rho_{\mu}}\partial_x w,
	\end{align*}
	\begin{align*}
	\partial_t z + \lambda_1 \partial_x z 
	\ge \mu \partial_{xx} z + 2\mu \dfrac{\partial_x\rho_{\mu}}{\rho_{\mu}}\partial_x z,
	\end{align*}
	with characteristics $\lambda_{1,2}$ given by
	\begin{equation*}
	\lambda_1 = \dfrac{m_{\mu}}{\rho_{\mu}} - \sqrt{p'_\ep(\rho_{\mu})}, \quad 
	\lambda_2 = \dfrac{m_{\mu}}{\rho_{\mu}} + \sqrt{p'_\ep(\rho_{\mu})}.
	\end{equation*}
	Hence, formally, a maximum principle would imply that
	$$w(t, x) \le \|w(0,\cdot)\|_{L_x^\infty}, \qquad z(t, x) \ge -  \|z(0,\cdot)\|_{L_x^\infty} $$
	and since $z \leq w$ a.e., we would infer that
	\[
	z(t, x) \geq  -  \|w(0,\cdot)\|_{L_x^\infty}.
	\]
	In the next proof, we show that these inequalities actually hold, as an application of Theorem~\ref{thm:invariant-region}.
\end{rmk}

\bigskip
\begin{proof}[Proof of Proposition \ref{prop:invariant}]
	We define the functions $G_{1,2}$ as
	\[
	G_1(U)=w-M, \quad G_2(U)=-z-M,
	\]
	in such a way that $\Sigma$ reads
	\begin{equation*}
	\Sigma = \{U \in \mathbb{R}^2 \, : \; G_1(U) \le 0, \; G_2(U)\leq 0\}.
	\end{equation*}
	Before going further, let us show that $M$ can be bounded uniformly with respect to $\mu$ and $\ep$.
	First, from the definition of $w$ \eqref{df:w} and assumptions \eqref{eq:alpha0-ep}-\eqref{eq:m0-ep} on the initial data, we have
	\[
	M 
	\leq A^0_\ep + \int_0^{A^0_\ep} \dfrac{\sqrt{p'_\ep(s)}}{s} ds,
	\]
	which proves that $M$ is bounded uniformly with respect to the artificial viscosity $\mu$.
	To show that $M$ is also controlled uniformly with respect to $\ep$, we need to control the integral involving the singular pressure $p_\ep$.
	For that purpose, we introduce $\Theta_\ep$ the primitive of $s \mapsto \sqrt{p'_\ep(s)}/s$ vanishing at $s=0$.
	We have then
	\[
	M \leq  A^0_\ep + \Theta_\ep(A^0_\ep),
	\]
	where 
	\[
	\Theta_\ep(s) 
	\underset{s \to 1^-}{\sim} C_\gamma \dfrac{\sqrt{\ep}}{(1-s)^{\frac{\gamma-1}{2}}}
	\underset{s \to 1^-}{\sim} \tilde{C}_\gamma \sqrt{H_\ep(s)}
	\]
	for some positive constants $C_\gamma,\ \tilde{C}_\gamma$ depending only on $\gamma$. 
	By assumption \eqref{eq:alpha0-ep}, which provides the control of the initial energy \eqref{eq:bound-H0} (see Remark \ref{rmk:initial}) the sequence $(\Theta_\ep(A^0_\ep))_\ep$ is bounded by some positive constant independent of $\ep$ (and $\mu$).
	As a consequence, $M$ is controlled uniformly with respect to $\ep$.
	
	\medskip
	To show that the region $\Sigma$ is invariant, we have to check the validity of the hypotheses of Theorem \ref{thm:invariant-region}. 
	Actually, the only assumption to be verified is the quasi-convexity of $G_1(U), \, G_2(U)$ at the boundary $\partial \Sigma$ given by 
	\begin{align*}
	\partial \Sigma
	& = \{U \in \R^2 \, | \, G_1(U)=0, \, G_2(U) \leq 0 \}  
	\ \bigcup \ \{U \in \R^2 \, | \ G_1(U) \leq 0, \, \, G_2(U)=0 \} \\
	& =: \partial \Sigma_1 \cup \partial \Sigma_2 .
	\end{align*}	
	In other words, at the boundary we have either $w = M$ or $z = - M$, which means that
	\begin{equation}
	m = L(\rho) \quad \text{or} \quad 
	m = - L(\rho)  \quad \forall \ (\rho,m) \in \partial \Sigma
	\end{equation}	
	where, by definition of $w$ and $z$, the function $L$ is
	\begin{equation*}
	L(\rho) :=  \rho M - \rho \int_0^\rho{\dfrac{\sqrt{p'_\ep(s)}}{s} \ \d s}. 
	\end{equation*}
	Notice that
	\begin{align}
	&  G_1(U) \leq 0  \quad \Longleftrightarrow \quad L(\rho) \geq m \label{eq:G1L1},\\
	&  G_2(U) \leq 0  \quad \Longleftrightarrow \quad  -L(\rho) \leq m \nonumber.
	\end{align}
	By Definition \ref{def:quasi-convex-funct2}, we need to check that for all $U^a, U^b \in \partial \Sigma $ and $\nu \in [0,1]$, we have:
	\begin{align}
	& G_1(\nu U^a + (1-\nu) U^b) \le \max \{ G_1(U^a), G_1(U^b) \} , \label{cond-qc1}\\
	& G_2(\nu U^a + (1-\nu) U^b) \le \max \{ G_2(U^a), G_2(U^b) \} \label{cond-qc2}.
	\end{align}
	Let us check condition \eqref{cond-qc1}.
	First, let us remark that the function $L$ is concave. Indeed
	\begin{align*}
	L'(\rho) 
	& = M - \int_0^\rho{\dfrac{\sqrt{p'_\ep(s)}}{s} \ \d s} - \sqrt{p'_\ep(\rho)}
	\end{align*}
	and thus, recalling that $\gamma > 1$,
	\begin{align*}
	L''(\rho) 
	& = - \dfrac{\sqrt{p'_\ep(\rho)}}{\rho} - \dfrac{p''_\ep(\rho)}{2 \sqrt{p'_\ep(\rho)}} \\
	& = - \sqrt{\ep \gamma} \left[\dfrac{\rho^{\frac{\gamma-3}{2}}}{(1-\rho)^{\frac{\gamma+1}{2}}}
	+ \dfrac{\gamma-1 +2\rho}{2} \dfrac{\rho^{\frac{\gamma-3}{2}}}{(1-\rho)^{\frac{\gamma+3}{2}}}
	\right]
	< 0.
	\end{align*}
	As a consequence of the concavity of $L$, we deduce that for any $\nu \in [0,1]$,
	\begin{align}\label{eq:L1}
	\nu L (\rho^a) + (1-\nu) L(\rho^b)
	& \leq L(\nu \rho^a + (1-\nu) \rho^b).
	\end{align}
	Let us now distinguish the two cases:
	\begin{itemize}
		\item $U^a$ and/or $U^b$ belong to $\partial \Sigma_1$, for instance $G_1(U^a) = 0$ and $G_1(U^b) \leq 0$. 
		Then inequality \eqref{eq:L1} yields
		\begin{align*}
		\nu m^a + (1-\nu) m^b
		& \leq L(\nu \rho^a + (1-\nu) \rho^b) 
		\end{align*}
		By characterization \eqref{eq:G1L1}, this means that
		\[
		G_1(\nu U^a + (1-\nu) U^b) \leq 0 = \max \{ G_1(U^a), G_1(U^b) \}.
		\]
		
		\item Both $U^a$, $U^b$ belong to $\partial \Sigma_2 \setminus \partial \Sigma_1$ with for instance 
		$G_1(U^a) < G_1(U^b) < 0$, we have to show that
		\[
		G_1(\nu U^a + (1-\nu) U^b)
		= G_1(U^c)
		\leq G_1(U^b) 
		= \frac{1}{\rho^b}(m^b -L(\rho^b)) 
		= 2 \frac{m^b}{\rho^b}
		\]
		where for the last equality we have used the fact that $G_2(U^b) = 0 \implies m^b = -L(\rho^b)$. 
		This yields
		\begin{align*}
		G_1(U^c)
		& = \dfrac{1}{\rho^c} \left(m^c -L(\rho^c)\right) \\
		& \leq \dfrac{1}{\rho^c} \left(m^c -\nu L(\rho^a) - (1-\nu) L(\rho^b) \right) 
		\quad \text{by concavity of} ~ L \\
		& \quad = \dfrac{1}{\rho^c} \left(m^c + \nu m^a + (1-\nu) m^b \right) 
		\quad \text{since} ~ G_2(U^a) = G_2(U^b) = 0.
		\end{align*}
		Now, since we have assumed that $G_1(U^a) < G_1(U^b)$ (and $G_2(U^a) = G_2(U^b) = 0$), then from \eqref{eq:G1L1} the following inequality follows
		\[
		\frac{m^a}{\rho^a} < \frac{m^b}{\rho^b}.
		\]
		As a consequence of this inequality, we deduce that
		\begin{align*}
		G_1(U^c) 
		& \leq \dfrac{2}{\rho^c} \left(\nu m^a + (1-\nu) m^b \right) \\
		& \leq \dfrac{2}{\rho^c} \left(\nu \frac{\rho^a}{\rho^b}m^b - (1-\nu) m^b \right) \\
		& \leq \dfrac{2}{\rho^c} \frac{m^b}{\rho^b}\left(\nu \rho^a - (1-\nu) \rho^b \right) \\
		& \leq 2 \frac{m^b}{\rho^b}.
		\end{align*}
	\end{itemize} 
	Finally we obtained in both cases inequality \eqref{cond-qc1}, that is
	\[
	G_1(\nu U^a + (1-\nu) U^b) \le \max \{ G_1(U^a), G_1(U^b) \}  \quad \forall \ (U^a, U^b) \in \partial \Sigma.
	\]
	Inequality \eqref{cond-qc2} can be proved with the same arguments.
	Therefore the assumptions of Theorem \ref{thm:invariant-region} are satisfied and the set $\Sigma$ defined in \eqref{df:Sigma}
	results to be invariant.
\end{proof}

\subsection{Reduction of the Young measure}
	The aim of this part of the appendix is to show the details of the computations which allow to prove that the support of the Young measure associated to the vanishing viscosity weak limits $(\brho_\ep, \bm_\ep)$ of $(\rho_{\ep, \mu}, m_{\ep, \mu})$ is reduced to a point, then yielding the strong convergence. \\
	To this end, we revisit a strategy which can be found for instance in Lu's book [Section 8.4, \cite{lu2002}] and was specifically designed for the case of the isentropic Euler equations with the adiabatic pressure law $p(\rho) = \kappa \rho^\gamma$, $\gamma \in (1,3]$. 
	We show here that we are able to extend this result to the singular pressure $p_\ep(\rho) = \ep \left({\rho}/{(1-\rho)}\right)^\gamma$, again with $\gamma \in (1,3]$, by adding a new idea in the previous strategy.
	This novelty and the generality of our computations could be useful in other cases.
	

	\medskip
	We recall the statement of Lemma \ref{lem:combination} that we are going to prove in this appendix.
	\begin{lemma}{\label{lem:calcul-app}}
		Assume that $ \gamma \in (1,3]$. Then the following equality holds
\begin{align} \label{eq:HOHO-app}
& \ep \dfrac{3-\gamma}{2(\gamma+1)} \dfrac{\brho^{\gamma+1}}{(1-\brho)^\gamma} < \nu, (u -\bu)^4 >
+ \ep^3 \dfrac{\gamma^2(5\gamma +1)}{2(\gamma+1)}  \dfrac{\brho^{3\gamma-5}}{(1-\brho)^{3\gamma+2}} < \nu, (\rho -\brho)^4 >  \nonumber \\
& + \ep^2 \ 6 \gamma  \dfrac{\brho^{2(\gamma-1)}}{(1-\brho)^{2\gamma +1}} \big(< \nu, (u-\bu) (\rho -\brho)> \big)^2  + \ \ER
= 0
\end{align}
where $\ER$ denotes ``an error'' whose $L^\infty$ norm is negligible compared to the other terms of the equality.
	\end{lemma}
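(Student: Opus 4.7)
The plan is to derive \eqref{eq:HOHO-app} as the leading-order consequence of a suitable linear combination of the four Div--Curl identities \eqref{eq:divcurl-id}, obtained by Taylor expanding the relative pairs $(\teta_i,\tq_i)$ around $(\bar U,\bar U)$ and carefully tracking the $\ep$-dependence through the derivatives of $p_\ep$.

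\smallskip
First, I would introduce the centered variables $X := \rho-\brho$ and $Y := u-\bu$. Using the normalisation $\brho = \langle \nu,\rho\rangle$, $\bm = \langle \nu,m\rangle$ one gets $\langle\nu,X\rangle=0$ and $\langle \nu,\brho\,Y+\bu\,X+XY\rangle = \langle \nu,m-\bm\rangle=0$. The pairs $(\teta_1,\tq_1)$, $(\teta_2,\tq_2)$ are polynomial in $(X,Y)$ up to $p_\ep(\rho)-p_\ep(\brho)$, which I expand as
\[
p_\ep(\rho)-p_\ep(\brho)=p'_\ep(\brho) X+\tfrac12 p''_\ep(\brho) X^{2}+\tfrac16 p'''_\ep(\brho) X^{3}+\dots,
\]
with $p_\ep^{(k)}(\brho)\sim \ep\,\brho^{\gamma-k}(1-\brho)^{-\gamma-k}$ up to explicit combinatorial prefactors. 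The pairs $(\teta_3,\tq_3)$ and $(\teta_4,\tq_4)$ vanish together with their first derivative in $U$ at $(\bar U,\bar U)$, so they start at quadratic and cubic order in $(X,Y)$ respectively. Each $\langle \nu,\teta_i\tq_j-\teta_j\tq_i\rangle$ then becomes a linear combination of moments $\langle \nu, X^{a}Y^{b}\rangle$ weighted by powers of $p_\ep^{(k)}(\brho)$, while the right-hand side $\langle \nu,\teta_i\rangle\langle \nu,\tq_j\rangle-\langle \nu,\teta_j\rangle\langle \nu,\tq_i\rangle$ simplifies thanks to $\langle\nu,\teta_1\rangle=\langle\nu,\tq_1\rangle=\langle\nu,\teta_2\rangle=0$.

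\smallskip
Second, I would successively exploit the identities for the index pairs $(1,2)$, $(1,3)$, $(2,3)$, $(2,4)$, $(3,4)$. The identity $(1,2)$ produces a scalar relation that, after neglecting subleading terms, takes the form $\langle\nu,Y^{2}\rangle=\bigl(p'_\ep(\brho)/\brho^{2}\bigr)\langle\nu,X^{2}\rangle+\ER$ (equation \eqref{eq:u2} in the paper), which will be key both to the $\gamma=3$ case below and to converting certain $Y$-moments into $X$-moments. The identities with one index in $\{3,4\}$ generate higher moment relations; by forming the combination that cancels the third-order moments $\langle\nu,X^{3}\rangle$, $\langle\nu,X^{2}Y\rangle$, $\langle\nu,XY^{2}\rangle$, $\langle\nu,Y^{3}\rangle$ (such odd-order terms would otherwise dominate but can always be eliminated in the polytropic-like algebra used by Lu in [Chapter~8, \cite{lu2002}]), the resulting equation at the top $\ep$-order reads exactly \eqref{eq:HOHO-app}. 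The three prefactors $\ep(3-\gamma)\brho^{\gamma+1}(1-\brho)^{-\gamma}/(2(\gamma+1))$, $\ep^{3}\gamma^{2}(5\gamma+1)\brho^{3\gamma-5}(1-\brho)^{-3\gamma-2}/(2(\gamma+1))$ and $6\ep^{2}\gamma\brho^{2(\gamma-1)}(1-\brho)^{-2\gamma-1}$ come out as the matching coefficients of $\langle\nu,Y^{4}\rangle$, $\langle\nu,X^{4}\rangle$ and $\langle\nu,XY\rangle^{2}$ in that combination, after using $p_\ep^{(k)}(\brho)$ with $k\le 3$ and factoring out a common $1/(\gamma+1)$.

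\smallskip
The main obstacle will be bookkeeping rather than any conceptual difficulty. Concretely, I must (i) identify the precise linear combination which annihilates all odd moments while simultaneously cancelling the cross terms proportional to $\langle\nu,X^{2}\rangle\langle\nu,Y^{2}\rangle$ that arise from the right-hand sides $\langle\nu,\teta_3\rangle\langle\nu,\tq_2\rangle$ etc., and (ii) bound the residual contributions gathered in $\ER$ by using the pointwise estimate $0\le \rho\le A_\ep<1$ of Proposition~\ref{prop:bounds-existence} together with the relation $\langle\nu,Y^{2}\rangle\sim p'_\ep(\brho)\brho^{-2}\langle\nu,X^{2}\rangle$. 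A separate delicate point is the critical case $\gamma=3$, where the prefactor of $\langle\nu,Y^{4}\rangle$ vanishes: there \eqref{eq:HOHO-app} alone only forces the reduction of the $\rho$-marginal of $\nu$, and one recovers the reduction of the $u$-marginal a posteriori from \eqref{eq:u2}, as already indicated in the proof of Proposition~\ref{prop:strong-cvg-mu}.
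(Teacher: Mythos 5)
Your overall route is the paper's: you work with the relative pairs $(\teta_i,\tq_i)$, Taylor-expand them around $(\bar U,\bar U)$ tracking the derivatives $p_\ep^{(k)}(\brho)$, and combine the Div--Curl identities \eqref{eq:divcurl-id} so as to kill the odd moments and the cross terms, exactly as in the appendix (the paper uses the pairs $(1,3)$, $(1,4)$, $(2,3)$, $(3,4)$ and finally $(1,2)$; your substitution of $(2,4)$ for $(1,4)$ is a bookkeeping variant that would have to be rechecked but is not the issue).

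The genuine gap is in the step where you assert that the linear combination annihilating the third-order moments and the cross terms $\langle\nu,X^2\rangle\langle\nu,Y^2\rangle$, $\langle\nu,X^2Y^2\rangle$ "reads exactly \eqref{eq:HOHO-app} at the top $\ep$-order". It does not: what such a combination produces is \eqref{eq:AiBi}/\eqref{eq:HO}, which still contains the two terms $\big(\langle\nu,(u-\bu)^2\rangle\big)^2$ and $\big(\langle\nu,(\rho-\brho)^2\rangle\big)^2$ with coefficients proportional to $(3-\gamma-4\brho)$ and $(\gamma-3+4\brho)$. These are of the same order as the fourth moments (by Jensen, $(\langle\nu,X^2\rangle)^2\le\langle\nu,X^4\rangle$), their signs depend on $\brho$, and — unlike in Lu's isentropic case — the coefficient of $\langle\nu,(u-\bu)^4\rangle$ is too small to absorb the possibly negative contribution, so they can neither be dropped into $\ER$ nor absorbed. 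The decisive step, which is precisely the novelty the paper claims over Lu, is to use the $(1,2)$ identity \eqref{eq:u2} \emph{quadratically}: squaring it converts $\big(\langle\nu,(u-\bu)^2\rangle\big)^2$ into $\ep^2\gamma^2\brho^{2(\gamma-3)}(1-\brho)^{-2(\gamma+1)}\big(\langle\nu,(\rho-\brho)^2\rangle\big)^2$ up to $\ER$, and after this substitution the two problematic squared-second-moment terms cancel identically, leaving only the three terms of \eqref{eq:HOHO-app}. In your proposal \eqref{eq:u2} is invoked only to bound $\ER$ and to handle $\gamma=3$, so as written the plan stops one step short of the stated identity; you need to make this exact cancellation explicit (and note it is an algebraic substitution, not part of any linear combination of the Div--Curl identities).
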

	
	\bigskip
	The proof of this lemma is technical and will be split therefore in different steps.
	\begin{itemize}
		\item Some algebraic expressions obtained as combinations of the outcomes of the Div-Curl Lemma in \ref{lem:div-curl} are established.
		\item The above-mentioned algebraic relations will then be employed to get inequality \eqref{eq:HOHO-app} which allows, after separation of the two cases $\gamma < 3$ and $\gamma=3$, to reduce the support of the measure $\nu$  to the point $(\bar{\rho},\bar{m})$.
	\end{itemize}

	\bigskip
	To simplify some further computations, we first Taylor-expand the most complicated ``relative'' entropy-entropy flux pairs $(\teta_i,\tq_i)$.
	\begin{lemma}{\label{lem:exp-eta-q}}
		The pairs $(\teta_i,\tq_i)$, $i=3,4$ can be rewritten as follows
		\begin{align*}
		\teta_3 & = \dfrac{\brho}{2} (u-\bu)^2 + \dfrac{1}{2} \dfrac{p'_\ep(\brho)}{\brho} (\rho-\brho)^2 + O_3\\
		\tq_3 & = \dfrac{1}{2} \brho \bu (u-\bu)^2 
		+ \dfrac{1}{2}\dfrac{p'_\ep(\brho)}{\brho} \bu (\rho -\brho)^2 
		+ p'_\ep(\brho) (u-\bu)(\rho-\brho) + O_3 \\
		\teta_4 & = 3\brho \bu (u- \bu)^2 
		+ 3\dfrac{p'_\ep(\brho)}{\brho}(\rho-\brho)^2 
		+ 6 \dfrac{p_\ep(\brho)}{\brho} (u-\bu)(\rho -\brho) + O_3 \\
		\tq_4 & = 3\big(p_\ep(\brho) + \brho \bu^2\big) (u-\bu)^2 
		+ 3\dfrac{p'_\ep(\brho)}{\brho^2}\big(p_\ep(\brho) + \brho \bu^2\big) (\rho-\brho)^2 \nonumber \\
		& \quad + 6\left(\dfrac{p_\ep(\brho)}{\brho} + p'_\ep(\brho)\right) \bu (u-\bu)(\rho -\brho) + O_3
		\end{align*}
		where 
		\[
		\|O_3\|_{L^\infty} \lesssim \sum_{k+l = 3} \|\big((\rho-\brho)^k (u-\bu)^l\big)\|_{L^\infty}
		\]
	\end{lemma}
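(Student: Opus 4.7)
The pairs $(\teta_i, \tq_i)$ for $i=3,4$ are, by construction, the second-order Taylor remainders of $(\eta_i, q_i)$ at the base state $\bar U = (\brho, \bm)$ with $\bm = \brho\bu$. They therefore vanish to first order in $(U - \bar U)$ and are quadratic at leading order in $(\rho - \brho, u - \bu)$. The plan is to Taylor-expand each summand appearing in the explicit formulas for $(\teta_i, \tq_i)$ up to second order, collect the quadratic contributions, and absorb all cubic and higher-order monomials into the remainder $O_3$; this absorption is legitimate since $\rho, m, \brho, \bm$ are uniformly $L^\infty$-bounded by Proposition~\ref{prop:bounds-existence}.

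Three elementary observations make the computation essentially mechanical. First, the velocity enters only through the exact identity $m/\rho - \bm/\brho = u - \bu$, so any factor $(m/\rho - \bm/\brho)^k$ is already $(u - \bu)^k$. Second, isolated occurrences of $m/\rho$ or $m$ are decomposed as $m/\rho = \bu + (u - \bu)$ and $m = \bm + \brho(u-\bu) + \bu(\rho-\brho) + (\rho-\brho)(u-\bu)$, the last term being already $O_3$. Third, the indefinite integral of $p_\ep(s)/s^2$ is best treated via the auxiliary function
\[
G(\rho) := \rho \int^\rho \frac{p_\ep(s)}{s^2}\,\d s, \quad G'(\rho) = \int^\rho \frac{p_\ep(s)}{s^2}\,\d s + \frac{p_\ep(\rho)}{\rho}, \quad G''(\rho) = \frac{p'_\ep(\rho)}{\rho},
\]
whose Taylor remainder $G(\rho) - G(\brho) - G'(\brho)(\rho - \brho) = \tfrac12 G''(\brho)(\rho-\brho)^2 + O_3$ is exactly the group of non-velocity terms in $\teta_3$, yielding the coefficient $\tfrac{p'_\ep(\brho)}{2\brho}(\rho-\brho)^2$. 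Combined with $\tfrac12 \rho(u-\bu)^2 = \tfrac12 \brho(u-\bu)^2 + O_3$, this gives the formula for $\teta_3$. Similarly, $\tq_3$ splits into $\tfrac12 m(u-\bu)^2$ (contributing $\tfrac12 \brho\bu(u-\bu)^2$), the cross term $(u-\bu)(p_\ep(\rho) - p_\ep(\brho))$ (expanding to $p'_\ep(\brho)(u-\bu)(\rho-\brho)$), and a $G$-type remainder multiplied by $m/\rho = \bu + (u-\bu)$, which contributes only the $\bu$-part modulo $O_3$.

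The expansions of $\teta_4$ and $\tq_4$ proceed by the same scheme, summand by summand: the trilinear factor $\rho(u-\bu)^2(u + 2\bu)$ collapses to $3\brho\bu(u-\bu)^2 + O_3$ via $u + 2\bu = 3\bu + (u-\bu)$; the integrals $\int_{\brho}^\rho p_\ep(s)/s^2\,\d s$ and $\int_{\brho}^\rho (p_\ep(s)/s)^2\,\d s$ are Taylor-expanded around $\brho$ using their first and second derivatives; and the linear subtractions at the end of each defining formula are designed precisely to cancel the first-order parts of those expansions. The main obstacle is purely combinatorial, and it arises in $\tq_4$: each of the quadratic forms $(u-\bu)^2$, $(\rho-\brho)^2$ and $(u-\bu)(\rho-\brho)$ receives contributions from several distinct blocks --- the product of $p_\ep(\rho)$ with the integral expansion, the separate integral of $(p_\ep/s)^2$, the Taylor remainder of $m^2/\rho^2$ multiplied by $p_\ep(\rho)$, and the block $m(u-\bu)^2(m/\rho + 2\bm/\brho)$ --- which must be combined with the correct signs to recover the grouping $p_\ep(\brho) + \brho\bu^2$ appearing in the statement. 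Once the small factors $(u-\bu)$ and $(\rho-\brho)$ are tracked systematically, every non-quadratic monomial falls into $O_3$ and the announced identities follow.
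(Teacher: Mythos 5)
Your plan is correct and coincides with what the paper does implicitly: the paper states Lemma~\ref{lem:exp-eta-q} without proof, as the outcome of exactly this second-order Taylor expansion of the explicit formulas about $\bar{U}=(\brho,\bm)$ (using $m/\rho-\bm/\brho=u-\bu$, the decomposition of $m$, and the $G$-type primitives), with all cubic and higher monomials absorbed into $O_3$ thanks to the uniform $L^\infty$ bounds. One caveat: carrying your scheme out for $\teta_4$ yields the $(\rho-\brho)^2$ coefficient $3\bu\, p'_\ep(\brho)/\brho$ rather than the printed $3\, p'_\ep(\brho)/\brho$ (the former is what the paper actually uses later, e.g. in the products entering Lemma~\ref{lem:app-2}), so the statement as written contains a typo that a full execution of your computation would expose rather than reproduce.
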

	
	In the same manner, for sake of simplicity, we introduce the notation $\ER$ for terms of the form
	\[
	<\nu, \big((\rho-\brho)^k (u-\bu)^l\big)> \quad \text{with} \quad k+l = 5,
	\]
	and therefore
	\[
	\|\ER\|_{L^\infty} \lesssim  \sum_{k+l =5} \|\big((\rho-\brho)^k (u-\bu)^l\big)\|_{L^\infty}.
	\]

	\subsubsection*{Step 1 - Combinations of the identities provided by the Div-Curl Lemma}
	Here we prove two lemmas, which give two crucial algebraic relations to be used in the last part of the proof.
	
	\begin{lemma}
		{\label{lem:A2}}
		The following equality holds
		\begin{align} {\label{eq:lem1}}
		& A_1(\brho) < \nu, (\rho -\brho)^4>
		+ A_2(\brho) \big(< \nu, (\rho -\brho)^2> \big)^2  \nonumber \\
		& + A_3(\brho) \Big[ < \nu, (\rho -\brho)^2> \ < \nu, (u -\bu)^2>
		+ < \nu, (\rho -\brho)^2(u -\bu)^2> \Big]  \nonumber \\
		& + A_4(\brho)  \big(< \nu, (u -\bu)^2> \big)^2  \nonumber \\
		& 
		= \brho^2  < \nu, (u -\bu)^4> + \ \ER
		\end{align}
		with
		\begin{align*}
		A_1(\brho) & = \dfrac{2(p_\ep'(\brho))^2 - p_\ep(\brho)p_\ep''(\brho)}{2\brho^2} - \dfrac{p_\ep'(\brho)p_\ep(\brho)}{\brho^3};\\
		A_2(\brho) & = \dfrac{3p_\ep(\brho)p_\ep''(\brho)}{2\brho^2};\\
		A_3(\brho) & = \dfrac{3p_\ep(\brho)}{\brho} + \dfrac{3p_\ep(\brho)p_\ep''(\brho)}{2p_\ep'(\brho)};\\
		A_4(\brho) & = \dfrac{3\brho p_\ep(\brho)}{p_\ep'(\brho)}.
		\end{align*}
	\end{lemma}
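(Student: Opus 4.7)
The strategy is to combine the Div--Curl identity \eqref{eq:divcurl-id} with the Taylor expansions supplied by Lemma \ref{lem:exp-eta-q}, exploiting the fact that both $(\teta_3,\tq_3)$ and $(\teta_4,\tq_4)$ are quadratic at leading order in the deviations $(\rho-\brho,\,u-\bu)$. The natural starting point is therefore the identity for $(i,j)=(3,4)$:
$$< \nu,\, \teta_3\tq_4-\teta_4\tq_3> \;=\; < \nu,\teta_3>\,< \nu,\tq_4>\,-\,< \nu,\teta_4>\,< \nu,\tq_3>,$$
which is an equation between quantities of total order four in the deviations. When the expansions of Lemma \ref{lem:exp-eta-q} are substituted, every contribution coming from an $O_3$ remainder produces terms of total degree at least five in $(\rho-\brho,\,u-\bu)$ and is therefore absorbed into the error term $\ER$.

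Introducing the shorthand $X:=(u-\bu)^2$, $Y:=(\rho-\brho)^2$, $Z:=(u-\bu)(\rho-\brho)$, the left-hand side of the identity above expands into a linear combination---with coefficients depending on $\brho$ and $\bu$---of $X^2$, $Y^2$, $XY$, $XZ$, $YZ$ and $Z^2$, while the right-hand side becomes a bilinear expression in the first moments $< \nu,X>$, $< \nu,Y>$, $< \nu,Z>$. The normalizations $< \nu,\rho-\brho>=0$ and $< \nu,m-\bm>=0$ provide the auxiliary relation $\brho\,< \nu,u-\bu>=-< \nu,Z>$ (modulo $\ER$), and thus eliminate $< \nu,u-\bu>$ wherever it appears. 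The remaining $Z$-type quantities $< \nu,Z^2>$, $< \nu,XZ>$, $< \nu,YZ>$ are in turn removed by exploiting the four auxiliary Div--Curl identities corresponding to $(i,j)\in\{(1,3),(2,3),(1,4),(2,4)\}$: since $< \nu,\teta_1>=< \nu,\teta_2>=0$, these are actually cubic identities whose expansion via Lemma \ref{lem:exp-eta-q} lets one re-express the $Z$-products in terms of $< \nu,X^2>$, $< \nu,Y^2>$, $< \nu,XY>$ and products of first moments.

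At this point all $\bu$-dependent coefficients must cancel, as a direct consequence of the Galilean invariance of the p-system (one may equivalently reduce from the outset to the case $\bu=0$ via the boost $u\mapsto u-\bu$, $m\mapsto m-\brho\bu$, which considerably shortens the bookkeeping). What survives is an identity with $\brho$-dependent coefficients only; a final regrouping, using the identities satisfied by $p_\ep,\,p'_\ep,\,p''_\ep$ (in particular $p_\ep(\brho)=\brho H'_\ep(\brho)-H_\ep(\brho)$ to simplify some combinations), yields precisely $A_1(\brho)$, $A_2(\brho)$, $A_3(\brho)$, $A_4(\brho)$ on the left and $\brho^2$ in front of $< \nu,X^2>$ on the right. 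The principal obstacle in the whole argument is the sheer volume of cubic and quartic cross-terms produced at intermediate stages: the auxiliary identities have to be applied in the proper order to avoid an explosion of $\bu$-dependent terms, and the Galilean-invariance reduction $\bu\to 0$ is the most efficient device to tame this combinatorics.
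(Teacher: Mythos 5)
There is a genuine gap: your starting point is the wrong Div--Curl identity for this lemma, and the elimination step you invoke cannot be carried out. The pairing $(i,j)=(3,4)$ that you take as the backbone is exactly what the paper uses to prove the \emph{companion} Lemma \ref{lem:app-2}, and its expansion yields relation \eqref{eq:lem2}, which unavoidably contains the term $\big(<\nu,(\rho-\brho)(u-\bu)>\big)^2$. The target identity \eqref{eq:lem1} contains no such term, so your plan requires expressing or cancelling $\big(<\nu,Z>\big)^2$ (with $Z=(\rho-\brho)(u-\bu)$) by means of the auxiliary identities $(1,3),(1,4),(2,3),(2,4)$. But none of these can do it: $(1,3)$ and $(1,4)$ have vanishing right-hand sides since $<\nu,\teta_1>=<\nu,\tq_1>=0$; in $(2,3)$ the only surviving product is $<\nu,\teta_3><\nu,\tq_2>$, and the quadratic part of $\teta_3$ has no $Z$-component, so only $<\nu,X>$ and $<\nu,Y>$ (with $X=(u-\bu)^2$, $Y=(\rho-\brho)^2$) appear; in $(2,4)$ the product $<\nu,\teta_4><\nu,\tq_2>$ produces $<\nu,Z><\nu,X>$ and $<\nu,Z><\nu,Y>$ but never $\big(<\nu,Z>\big)^2$. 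Consistently with this, the square $\big(<\nu,Z>\big)^2$ is \emph{not} eliminated anywhere in the paper: it survives up to the final identity \eqref{eq:HOHO}, where its positive coefficient is essential to the conclusion. A further symptom that the $(3,4)$ route cannot end at \eqref{eq:lem1} is the asymmetry of the target: the coefficient of $<\nu,(u-\bu)^4>$ is $\brho^2$ while that of $\big(<\nu,(u-\bu)^2>\big)^2$ is $A_4=3\brho p_\ep/p'_\ep$, whereas the $(3,4)$ identity produces the \emph{same} coefficient $B_1$ in front of both.

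What the lemma actually requires — and what your sketch never addresses — is the handling of the cubic moments. The paper forms the combination $\eqref{eq:v1v4}-6\bu\times\eqref{eq:v1v3}$ of the $(1,4)$ and $(1,3)$ identities to get \eqref{eq:lem11}, and uses $(2,3)$ (with the nonzero product $<\nu,\teta_3><\nu,\tq_2>$) to get \eqref{eq:lem12}; both relations still contain the cubic moments $<\nu,(\rho-\brho)^3>$ and $<\nu,(\rho-\brho)(u-\bu)^2>$, and the crux is that these enter the two relations with coefficients in the same ratio $-p'_\ep(\brho)/\brho^2$, so that the single weighted combination $\eqref{eq:lem11}+\frac{6p_\ep(\brho)}{\brho p'_\ep(\brho)}\times\eqref{eq:lem12}$ annihilates both at once and leaves precisely \eqref{eq:lem1} with the stated $A_i$. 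In other words, the quartic terms of Lemma \ref{lem:A2} arise from the subleading (quartic) corrections of these ``cubic'' identities, not from the $(3,4)$ pairing; your appeal to Galilean invariance and a ``final regrouping'' hides exactly the cancellation that constitutes the proof, and as organized your derivation would terminate at \eqref{eq:lem2}, not at \eqref{eq:lem1}.
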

	
	\bigskip
	\begin{proof}
		First of all, observing that
		\[
		<\nu, \teta_1 >\  = \ <\nu, \rho - \brho > \ = 0,\quad <\nu, \tq_1> \ = \ <\nu, m -\bm> \ = 0,
		\]
		we infer from the identities \eqref{eq:divcurl-id} derived from Div-Curl Lemma, that 
		\begin{equation}
		\begin{cases}
		\ <\nu, \teta_1 \tq_{3} - \teta_{3} \tq_1> \
		= \ <\nu, \teta_1 > \ < \nu, \tq_{3}> - <\nu,\teta_{3}> <\nu, \tq_1>
		= 0, \\
		\ <\nu, \teta_1 \tq_{4} - \teta_{4} \tq_1> \
		= \ <\nu, \teta_1 > \ < \nu, \tq_{4}> - <\nu,\teta_{4}> <\nu, \tq_1>
		= 0.
		\end{cases}
		\end{equation}
		Using the explicit expressions of  $(\teta_i,\tq_i)$, after some computations we end up with the two identities
		\begin{align} {\label{eq:v1v3}}
		0
		& = <\nu, \teta_1 \tq_{3} - \teta_{3} \tq_1> \nonumber \\
		& = <\nu, (\rho - \brho)\Bigg[\frac{1}{2}m \left(u - \bu \right)^2
		+ \left(u - \bu\right)(p_\ep(\rho) - p_\ep(\brho)) \nonumber\\
		& \hspace{2.5cm} 
		+ u\left(\rho \int_{\brho}^{\rho}\frac{p_\ep(s)}{s^2} - \frac{p_\ep(\brho)}{\brho}(\rho-\brho) \right)\Bigg]> \nonumber\\
		& \quad - <\nu, (m-\bm) \Bigg[\frac{1}{2}\rho \left(u - \bu \right)^2 +
		\rho \int^\rho \frac{p_\ep(s)}{s^2}  - \brho \int^{\brho} \frac{p_\ep(s)}{s^2} \nonumber \\
		& \hspace{3cm}
		- \left( \int^{\brho} \frac{p_\ep(s)}{s^2}  + \frac{p_\ep(\brho)}{\brho} \right) (\rho - \brho)\Bigg]> \nonumber \\
		& = <\nu, \left[p_\ep(\rho)(\rho - \brho) - \rho \brho \left(\int_{\brho}^{\rho}\dfrac{p_\ep(s)}{s^2}ds\right)\right](u-\bu)> 
		- < \nu, \dfrac{1}{2} \rho \brho (u - \bu)^3>,
		\end{align}
		and
		\begin{align}{\label{eq:v1v4}}
		0
		& = <\nu, \teta_1 \tq_{4} - \teta_{4} \tq_1> \nonumber \\
		& = 3 < \nu, (\rho -\brho)\left[2\Delta + (u^2-\bu^2)p_\ep(\rho)\right]>
		- < \nu, \rho \brho(u-\bu)^3(u + 2\bu)> \\
		& \quad	 - 6 < \nu, \rho \brho u(u-\bu) \int_{\brho}^{\rho} \dfrac{p_\ep(s)}{s^2} ds > \nonumber ,
		\end{align}
		where
		\[
		\Delta 
		= p_\ep(\rho)\int_{\brho}^{\rho} \dfrac{p_\ep(s)}{s^2} ds 
		- \int_{\brho}^{\rho} \dfrac{(p_\ep(s))^2}{s^2} ds.
		\]
		The combination $\displaystyle \eqref{eq:v1v4}-6\bu\times \eqref{eq:v1v3}$ then yields 
		\begin{align*}
		0 & = 3 < \nu, (\rho -\brho)\left[2\Delta + (u^2-\bu^2)p_\ep(\rho)\right]>
		- < \nu, \rho \brho(u-\bu)^3(u + 2\bu)> \\
		& \quad	 - 6 < \nu, \rho \brho u(u-\bu) \int_{\brho}^{\rho} \dfrac{p_\ep(s)}{s^2} ds >
		-6 <\nu, p_\ep(\rho)(\rho - \brho) (u-\bu)\bu > \nonumber \\ 
		& \quad  + 6 <\nu,\rho \brho \left(\int_{\brho}^{\rho}\dfrac{p_\ep(s)}{s^2}ds\right)(u-\bu)\bu >
		+ 6 < \nu, \dfrac{1}{2} \rho \brho \bu (u - \bu)^3> \nonumber \\
		& = 6 <\nu,(\rho-\brho)\Delta>
		- 6 < \nu, \rho \brho\left(\int_{\brho}^{\rho}\dfrac{p_\ep(s)}{s^2}ds \right)(u-\bu)^2> \\
		& \quad + 3<\nu,(\rho-\brho)p_\ep(\rho)(u-\bu)^2> 
		-  <\nu, \rho \brho(u-\bu)^4>.
		\end{align*}
		Performing a Taylor expansion of the right-hand side (in particular we expand $\Delta$ up to order 3), and recalling that we denote
		\[
		\ER  = \sum_{k+l = 5} <\nu, O\big((\rho-\brho)^k (u-\bu)^l\big)>,
		\]
		we obtain
		\begin{align}\label{eq:lem11}
		0 
		& = \left(\dfrac{2p''_\ep(\brho)p_\ep(\brho) +(p'_\ep(\brho))^2}{\brho^2} - 2\dfrac{p'_\ep(\brho)p_\ep(\brho)}{\brho^3} \right) < \nu, (\rho-\brho)^4>  \\
		& \quad	+ 3 \dfrac{p'_\ep(\brho)p_\ep(\brho)}{\brho^2} < \nu, (\rho-\brho)^3> 
		- 3 p_\ep(\brho) <\nu,(\rho-\brho)(u-\bu)^2 > \nonumber \\
		& \quad 
		- \brho^2 <\nu,(u-\bu)^4 >
		+ \ER . \nonumber
		\end{align}
		
		\bigskip
		Coming back to the following outcome of the Div-Curl Lemma 
		\[
		<\nu, \teta_2 \tq_{3} - \teta_{3} \tq_2> \
		= \ <\nu, \teta_2 > \ < \nu, \tq_{3}> - <\nu, \teta_3 > \ < \nu, \tq_{2}>,
		\]
		and noticing that $<\nu, \teta_2 > = 0$, one gets
		\[
		0
		= <\nu, \teta_2 \tq_{3} - \teta_{3} \tq_2> + <\nu,\teta_{3}> <\nu, \tq_2>. 
		\]
		The use of the explicit expressions of $(\teta_i,\tq_i), \, i=2,3$, and a Taylor expansion as before yield
		\begin{align}\label{eq:lem12}
		0 
		& = \dfrac{2(p'_\ep(\brho))^2 -5\brho p'_\ep(\brho) p''_\ep(\brho)}{12\brho^2} <\nu,(\rho-\brho)^4> \\
		& \quad - \dfrac{(p'_\ep(\brho))^2}{2\brho} <\nu,(\rho-\brho)^3>
		+ \dfrac{1}{2} \brho p'_\ep(\brho) <\nu,(\rho-\brho)(u-\bu)^2> \nonumber \\
		& \quad + \left(\dfrac{p'_\ep(\brho)}{2} + \dfrac{\brho p''_\ep(\brho)}{4} \right) <\nu,(\rho-\brho)^2(u-\bu)^2> \nonumber \\
		& \quad + \left(\dfrac{p'_\ep(\brho)}{2} + \dfrac{\brho p''_\ep(\brho)}{4} \right) <\nu,(\rho-\brho)^2> <\nu,(u-\bu)^2> \nonumber \\
		& \quad + \dfrac{p''_\ep(\brho) p'_\ep(\brho)}{4\brho} \big(<\nu,(\rho-\brho)^2>\big)^2
		+ \dfrac{\brho^2}{2} \big( <\nu,(u-\bu)^2>  \big)^2
		+ \ER .\nonumber
		\end{align}
		We finally obtain the result \eqref{eq:lem1} of Lemma \ref{lem:A2} thanks to the combination 
		\[
		\eqref{eq:lem11} + \dfrac{6p_\ep(\brho)}{\brho p'_\ep(\brho)} \times \eqref{eq:lem12}.
		\]
	\end{proof}
	
	\bigskip
	
	\begin{lemma}{\label{lem:app-2}}
		The relation below holds true:
		\begin{align}{\label{eq:lem2}}
		&  B_1(\brho) < \nu,(u -\bu)^4> + B_2(\brho)<\nu,(\rho -\brho)^4 > \nonumber \\
		& \quad - B_3(\brho)\Big[<\nu,(u -\bu)^2(\rho - \brho)^2> + <\nu,(\rho-\brho)^2> <\nu,(u-\bu)^2>\Big]
		\nonumber\\
		& = B_1(\brho) \big(< \nu,(u -\bu)^2>\big)^2 
		+ B_2(\brho) \big(< \nu,(\rho -\brho)^2>\big)^2    \\
		& \quad	- 2 B_3(\brho) \big(< \nu,(\rho-\brho)(u-\bu)>\big)^2
		+ \ \ER \nonumber
		\end{align}	
		where
		\begin{align*}
		B_1(\brho) & = \dfrac{3\brho p_\ep(\brho)}{2},\\
		B_2(\brho) & = \dfrac{3 p_\ep(\brho) (p'_\ep(\brho))^2}{2\brho^3},\\
		B_3(\brho) & = \dfrac{3p_\ep(\brho) p'_\ep(\brho)}{\brho}.
		\end{align*}
	\end{lemma}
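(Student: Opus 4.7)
The plan is to derive \eqref{eq:lem2} from the only Div-Curl identity among the four entropy-entropy flux pairs $(\teta_i,\tq_i)$ that has not yet been exploited in the proof of Lemma~\ref{lem:A2}, namely the one with $(i,j)=(3,4)$:
\[
<\nu,\teta_3 \tq_4 - \teta_4 \tq_3> \ = \ <\nu,\teta_3>\,<\nu,\tq_4> \ - \ <\nu,\teta_4>\,<\nu,\tq_3>.
\]
This is the unique choice for which both sides are genuinely quartic in $(u-\bu,\rho-\brho)$ at leading order, and hence the only combination able to produce the term $\bigl(<\nu,(u-\bu)(\rho-\brho)>\bigr)^2$ which is absent from \eqref{eq:lem1}. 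Its coefficient arises from the mixed monomials $F_3\,(u-\bu)(\rho-\brho)$ in $\tq_3$ and $G_4\,(u-\bu)(\rho-\brho)$ in $\teta_4$, and equals $-G_4 F_3 = -6\,p_\ep p'_\ep/\brho = -2B_3$, precisely the coefficient needed in \eqref{eq:lem2}.

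Substituting the Taylor expansions of Lemma~\ref{lem:exp-eta-q} and collecting coefficients of the quartic monomials on the left and of the products of quadratic moments on the right, two slots align immediately: the coefficient of $<\nu,(u-\bu)^4>$ (resp.\ of $\bigl(<\nu,(u-\bu)^2>\bigr)^2$) equals $A_3 D_4 - A_4 D_3 = 3\brho p_\ep/2 = B_1$, and the coefficient of $\bigl(<\nu,(u-\bu)(\rho-\brho)>\bigr)^2$ equals $-2B_3$ as above. All the other coefficients — those of $<\nu,(\rho-\brho)^4>$, $<\nu,(u-\bu)^2(\rho-\brho)^2>$, $<\nu,(u-\bu)(\rho-\brho)^3>$, and of the mixed products $<\nu,(\rho-\brho)^2><\nu,(u-\bu)^2>$ and $<\nu,(\rho-\brho)^2><\nu,(u-\bu)(\rho-\brho)>$ — carry Galilean-dependent corrections (typically through a factor $\bu-1$), which must be eliminated since $B_2, B_3$ in \eqref{eq:lem2} depend only on $\brho$.

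To kill these spurious $\bu$-contributions, I add to the $(3,4)$-identity a suitably weighted combination of the three auxiliary identities already established in the proof of Lemma~\ref{lem:A2}: the cubic identity \eqref{eq:v1v3} (from $(i,j)=(1,3)$), the quartic identity \eqref{eq:v1v4} (from $(1,4)$), and the identity \eqref{eq:lem12} (from $(2,3)$). The weights, depending on $\brho$, $\bu$, $p_\ep(\brho)$ and $p'_\ep(\brho)$, are chosen to enforce simultaneous cancellation of every $\bu$-dependent coefficient; the existence of such a combination mirrors the algebraic identity $B_3^2 = 4 B_1 B_2$ visible from the explicit formulae of $B_1, B_2, B_3$, which itself reflects the perfect-square structure of the leading quadratic form of the convex entropy $\teta_3$. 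If \eqref{eq:v1v3}, \eqref{eq:v1v4} and \eqref{eq:lem12} turn out to be insufficient, the hitherto unused $(2,4)$-Div-Curl identity (with $<\nu,\teta_2>=0$) supplies the extra linear relation.

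The main obstacle is the algebraic bookkeeping of the $O_3$ tails coming from Lemma~\ref{lem:exp-eta-q}: those tails, once multiplied by the leading quadratic parts of the other factor in $\teta_3 \tq_4 - \teta_4 \tq_3$ and in $<\nu,\teta_3><\nu,\tq_4>$, produce non-trivial contributions at the fourth-order level that feed into every one of the coefficients listed above, including those of $<\nu,(u-\bu)^3(\rho-\brho)>$ and $<\nu,(u-\bu)(\rho-\brho)^3>$. Identifying the exact linear combination that simultaneously annihilates every Galilean-dependent coefficient is therefore delicate, and generalises at the fourth-order level the combination $\eqref{eq:lem11}+\tfrac{6 p_\ep(\brho)}{\brho p'_\ep(\brho)}\times\eqref{eq:lem12}$ that produced \eqref{eq:lem1}.
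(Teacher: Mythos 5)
You start from the correct identity — the paper's proof of Lemma \ref{lem:app-2} also rests solely on the Div-Curl relation for the pair $(i,j)=(3,4)$, and your identification of the coefficients $B_1$ and $-2B_3$ is right — but the core of your argument is built on a false premise and, more importantly, is never carried out. You claim that after substituting the expansions of Lemma \ref{lem:exp-eta-q} most coefficients (those of $<\nu,(\rho-\brho)^4>$, $<\nu,(u-\bu)^2(\rho-\brho)^2>$, the mixed products, etc.) retain Galilean corrections in $\bu$ that must be removed by adding a weighted combination of \eqref{eq:v1v3}, \eqref{eq:v1v4}, \eqref{eq:lem12} (or of the $(2,4)$ identity). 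In fact no such corrections survive: the actual content of the paper's proof is the direct computation showing that $\teta_3\tq_4-\teta_4\tq_3$ can be written as $T_1+T_2+T_3$, where every term depends on $\bu$ only through the difference $u-\bu$, so that its expectation expands as \eqref{eq:nu-prod} with the clean coefficients $B_1$, $B_2$, $-B_3$ and an $\ER$ remainder; and on the product side the $\bu$-dependent quadratic contributions of $<\nu,\teta_3><\nu,\tq_4>$ and of $<\nu,\teta_4><\nu,\tq_3>$ cancel pairwise, leaving \eqref{eq:prod-nu} with coefficients $B_1$, $B_2$, $+B_3$ and $-2B_3$. Equating the two sides then gives \eqref{eq:lem2} at once, with no auxiliary identities whatsoever.

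The gap is therefore twofold. First, the extra machinery you invoke is not only unnecessary, it is counterproductive: mixing in \eqref{eq:v1v3}, \eqref{eq:v1v4}, \eqref{eq:lem12} with $\bu$-dependent weights would reintroduce precisely the cross terms $<\nu,(\rho-\brho)^2><\nu,(u-\bu)^2>$ and $<\nu,(u-\bu)^2(\rho-\brho)^2>$ that are meant to be handled only later, in the combination of Lemma \ref{lem-final} with Lemma \ref{lem:A2}, and it could destroy the exact coefficients $B_2$, $B_3$ that the statement requires. Second, and decisively, the cancellation you rely on is asserted, not proved: you never exhibit the weights, never verify that a simultaneous cancellation of all alleged $\bu$-dependent coefficients is possible, and never compute the coefficients of $<\nu,(\rho-\brho)^4>$, of $<\nu,(u-\bu)^2(\rho-\brho)^2>$ or of the squared moments on the right-hand side; the observation $B_3^2=4B_1B_2$ is correct but is no substitute for the expansion. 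What is needed (and what the paper does) is the explicit fourth-order Taylor expansion of $<\nu,\teta_3\tq_4-\teta_4\tq_3>$ and of $<\nu,\teta_3><\nu,\tq_4>-<\nu,\teta_4><\nu,\tq_3>$, with the $O_3$ tails of Lemma \ref{lem:exp-eta-q} absorbed into $\ER$; as written, your proposal leaves the central computation of the lemma undone.
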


	\begin{proof}[Proof of Lemma \ref{lem:app-2}]
		After some easy computations, one can check that
		\begin{align*}
		& < \nu, \teta_3 \tq_4 - \teta_4 \tq_3> \\
		& = \ < \nu, \left(\frac{1}{2}\rho p_\ep(\rho) + \rho p_\ep(\brho)\right)(u-\bu)^4 >  \\
		& \quad + 6 < \nu, \Big[\rho\int_{\brho}^{\rho}\frac{p_\ep(s)}{s^2} - \frac{p_\ep(\brho)}{\brho} (\rho -\brho)\Big]
		\Big[p_\ep(\rho) \int_{\brho}^{\rho}\frac{p_\ep(s)}{s^2} - \int_{\brho}^{\rho}\left(\frac{p_\ep(s)}{s}\right)^2\Big] > \\
		& \quad + < \nu, 3(u-\bu)^2 
		\Big[-\rho \int_{\brho}^{\rho}\left(\frac{p_\ep(s)}{s}\right)^2 
		- (\rho-\brho)\frac{p_\ep(\brho)p_\ep(\rho)}{\brho} 
		+ 2\rho p_\ep(\brho) \int_{\brho}^{\rho}\frac{p_\ep(s)}{s^2}
		\Big]> \\
		& = < \nu, T_1 + T_2 + T_3 >.
		\end{align*}
		Expanding the different terms $T_i$ we get
		\begin{align*}
		T_1 
		& = \left(\frac{1}{2}\rho p_\ep(\rho) + \rho p_\ep(\brho)\right)(u-\bu)^4  \\
		& = \left(\frac{1}{2}\brho p_\ep(\brho) + \brho p_\ep(\brho)\right)(u-\bu)^4 + \ER \\
		& = \frac{3}{2}\brho p_\ep(\brho) (u-\bu)^4 + \ER. 
		\end{align*}
		We then write
		\begin{align*}
		T_2 
		& =  6 \ T_2^1 \times T_2^2
		\end{align*}
		with
		\begin{align*}
		T_2^1
		& = \rho\int_{\brho}^{\rho}\frac{p_\ep(s)}{s^2} - \frac{p_\ep(\brho)}{\brho} (\rho -\brho) \\
		& = (\rho-\brho) \Big[\brho \frac{p_\ep(\brho)}{\brho^2} - \frac{p_\ep(\brho)}{\brho} \Big]
		+ (\rho - \brho)^2 \Big[ \frac{p_\ep(\brho)}{2\brho^2} + \frac{1}{2}\frac{p'_\ep(\brho)}{\brho} - \frac{p_\ep(\brho)}{2 \brho^2}\Big]
		+ \ER \\
		& = \frac{1}{2}\frac{p'_\ep(\brho)}{\brho}(\rho - \brho)^2 + \ER,
		\end{align*}
		\begin{align*}
		T_2^2
		& = p_\ep(\rho) \int_{\brho}^{\rho}\frac{p_\ep(s)}{s^2} = \frac{1}{2}\frac{p'_\ep(\brho)p_\ep(\brho)}{\brho^2} (\rho-\brho)^2 + \ER.
		\end{align*}
		Hence,
		\begin{align*}
		T_2 
		& =  6 \ T_2^1 \times T_2^2 
		= \frac{3}{2}\frac{(p'_\ep(\brho))^2p_\ep(\brho)}{\brho^3} (\rho-\brho)^4 + \ER.
		\end{align*}
		Finally, we get for the last term $T_3$ 
		\begin{align*}
		T_3
		& =  3(u-\bu)^2 
		\left[-\rho \int_{\brho}^{\rho}\left(\frac{p_\ep(s)}{s}\right)^2 
		- (\rho-\brho)\frac{p_\ep(\brho)p_\ep(\rho)}{\brho} 
		+ 2\rho p_\ep(\brho) \int_{\brho}^{\rho}\frac{p_\ep(s)}{s^2}
		\right] \\
		& = 3(u-\bu)^2 (\rho-\brho) \left[
		-\brho \frac{(p_\ep(\brho))^2}{\brho^2} - \frac{(p_\ep(\brho))^2}{\brho}
		+ 2 \brho \frac{(p_\ep(\brho))^2}{\brho^2}
		\right] \\
		& \quad + 3(u-\bu)^2 (\rho-\brho)^2 \Bigg[
		- \frac{(p_\ep(\brho))^2}{\brho^2} - \brho \frac{p'_\ep(\brho) p_\ep(\brho)}{\brho^2}
		+ \frac{(p_\ep(\brho))^2}{\brho^2} - \frac{p'_\ep(\brho) p_\ep(\brho)}{\brho}\\
		& \hspace{4cm} + 2 \frac{(p_\ep(\brho))^2}{\brho^2} + \frac{p'_\ep(\brho) p_\ep(\brho)}{\brho}
		- 2 \frac{(p_\ep(\brho))^2}{\brho^2}  
		\Bigg]
		+ \ER \\
		& =  - 3 \frac{p'_\ep(\brho) p_\ep(\brho)}{\brho} (u-\bu)^2 (\rho-\brho)^2
		+ \ER .
		\end{align*}
		Altogether, we obtain 
		\begin{align}\label{eq:nu-prod}
		< \nu, \teta_3 \tq_4 - \teta_4 \tq_3>
		& = \frac{3}{2} \brho p_\ep(\brho) < \nu, (u-\bu)^4 >
		+ \frac{3}{2}\frac{(p'_\ep(\brho))^2p_\ep(\brho)}{\brho^2} < \nu, (\rho-\brho)^4> \\
		& \quad  - 3 \frac{p'_\ep(\brho) p_\ep(\brho)}{\brho} < \nu,(u-\bu)^2 (\rho-\brho)^2>
		+ \ \ER. \nonumber
		\end{align}
		
		\medskip
		On the other hand, we have from Lemma \ref{lem:exp-eta-q}
		\begin{align*}
		< \nu,\teta_3> \ <\nu,\tq_4> 
		& = \dfrac{3}{2} \brho \big(p_\ep(\brho) + \brho \bu^2\big) \big(<\nu,(u-\bu)^2>\big)^2 \\
		& \quad +  \dfrac{3}{2} \dfrac{(p'_\ep(\brho))^2}{\brho^3}\big(p_\ep(\brho) + \brho \bu^2\big) \big(<\nu,(\rho-\brho)^2>\big)^2 \\
		& \quad + 3 \dfrac{p'_\ep(\brho)}{\brho}\big(p_\ep(\brho) + \brho \bu^2\big) <\nu,(u-\bu)^2> <\nu,(\rho-\brho)^2> \\
		& \quad + 3\brho \bu \left(\dfrac{p_\ep(\brho)}{\brho} + p'_\ep(\brho)\right) <\nu,(u-\bu)^2> <\nu,(\rho-\brho)(u-\bu)> \\
		& \quad + 3 \bu \dfrac{p'_\ep(\brho)}{\brho} \left(\dfrac{p_\ep(\brho)}{\brho} + p'_\ep(\brho)\right) <\nu,(\rho-\brho)^2> <\nu,(\rho-\brho)(u-\bu)> \\
		& \quad + \ER,
		\end{align*}
		while
		\begin{align*}
		< \nu,\teta_4> \ <\nu,\tq_3> 
		& = \dfrac{3}{2} \brho^2 \bu^2 \big(<\nu,(u-\bu)^2>\big)^2 \\
		& \quad +  \dfrac{3}{2} \dfrac{(p'_\ep(\brho))^2}{\brho^2}\bu^2 \big(<\nu,(\rho-\brho)^2>\big)^2 \\
		& \quad + 3  p'_\ep(\brho)\bu^2 <\nu,(u-\bu)^2> <\nu,(\rho-\brho)^2> \\
		& \quad + 3\brho \bu \left(\dfrac{p_\ep(\brho)}{\brho} + p'_\ep(\brho)\right) <\nu,(u-\bu)^2> <\nu,(\rho-\brho)(u-\bu)> \\
		& \quad + 3 \bu \dfrac{p'_\ep(\brho)}{\brho} \left(\dfrac{p_\ep(\brho)}{\brho} + p'_\ep(\brho)\right) <\nu,(\rho-\brho)^2> <\nu,(\rho-\brho)(u-\bu)> \\
		& \quad + 6 \dfrac{p'_\ep(\brho) p_\ep(\brho)}{\brho}  ( <\nu,(\rho-\brho)(u-\bu)>)^2 \\
		& \quad + \ER.
		\end{align*}
		Hence
		\begin{align}\label{eq:prod-nu}
		& < \nu,\teta_3> \ <\nu,\tq_4> - < \nu,\teta_4> \ <\nu,\tq_3> \nonumber \\
		& = \dfrac{3}{2} \brho p_\ep(\brho) \big(<\nu,(u-\bu)^2>\big)^2 
		+ \dfrac{3}{2} \dfrac{(p'_\ep(\brho))^2p_\ep(\brho) }{\brho^3}\big(<\nu,(\rho-\brho)^2>\big)^2  \nonumber\\
		& \quad + 3 \dfrac{p'_\ep(\brho)p_\ep(\brho)}{\brho} <\nu,(u-\bu)^2> <\nu,(\rho-\brho)^2> \nonumber \\
		& \quad - 6 \dfrac{p'_\ep(\brho) p_\ep(\brho)}{\brho}  ( <\nu,(\rho-\brho)(u-\bu)>)^2
		+ \ER.
		\end{align}
		Now, as a consequence of the Div-Curl lemma, we have
		\[
		< \nu, \teta_3 \tq_4 - \teta_4 \tq_3> 
		- \ \big(< \nu,\teta_3> \ <\nu,\tq_4> - < \nu,\teta_4> \ <\nu,\tq_3>)= 0 
		\]
		which leads, thanks to \eqref{eq:nu-prod} and \eqref{eq:prod-nu}, to 
		\begin{align*}
		& \frac{3}{2} \brho p_\ep(\brho) < \nu, (u-\bu)^4 >
		+ \frac{3}{2}\frac{(p'_\ep(\brho))^2p_\ep(\brho)}{\brho^{3}} < \nu, (\rho-\brho)^4> \\
		& \qquad  - 3 \frac{p'_\ep(\brho) p_\ep(\brho)}{\brho} < \nu,(u-\bu)^2 (\rho-\brho)^2> \\
		& =\dfrac{3}{2} \brho p_\ep(\brho) \big(<\nu,(u-\bu)^2>\big)^2 
		+ \dfrac{3}{2} \dfrac{(p'_\ep(\brho))^2p_\ep(\brho) }{\brho^3}\big(<\nu,(\rho-\brho)^2>\big)^2  \nonumber\\
		& \quad + 3 \dfrac{p'_\ep(\brho)p_\ep(\brho)}{\brho} <\nu,(u-\bu)^2> <\nu,(\rho-\brho)^2> \nonumber \\
		& \quad - 6 \dfrac{p'_\ep(\brho) p_\ep(\brho)}{\brho}  ( <\nu,(\rho-\brho)(u-\bu)>)^2
		+ \ER.
		\end{align*}
		This corresponds exactly to Equation \eqref{eq:lem2}.
	\end{proof}

		\subsubsection*{Step 2 - Final expression with positive coefficients}
		The goal now is to use the results of Lemma \ref{lem:A2} and Lemma \ref{lem:app-2} in order to get the desired bound in Lemma \ref{lem:calcul-app}, which comes directly from the following result.
		\begin{lemma}\label{lem-final}
			The relation below holds true:
			\begin{align} {\label{eq:lem3}}
			&C_1(\brho) < \nu, (u -\bu)^4 >
			+ C_2(\brho)  < \nu, (\rho -\brho)^4 > 
			\nonumber \\
			& + C_3 (\brho) \big(< \nu, (u-\bu) (\rho -\brho)> \big)^2
			+ \ \ER
			= 0
			\end{align}
			where
			\begin{align*}
			C_1(\brho) & = B_1(\brho) - \frac{B_3(\brho) \brho^2}{A_3(\brho)} 
			= \ep \dfrac{3-\gamma}{2(\gamma+1)} \dfrac{\brho^{\gamma+1}}{(1-\brho)^\gamma}, \\ 
			C_2(\brho) & = \frac{B_3(\brho) A_1(\brho)}{A_3(\brho)} + B_2(\brho) 
			= \ep^3 \dfrac{\gamma^2(5\gamma +1)}{2(\gamma+1)}  \dfrac{\brho^{3\gamma-5}}{(1-\brho)^{3\gamma+2}}, \\ 
			C_3(\brho) & =2B_3(\brho)
			= \ep^2 \ 6 \gamma  \dfrac{\brho^{2(\gamma-1)}}{(1-\brho)^{2\gamma +1}}.
			\end{align*}
		\end{lemma}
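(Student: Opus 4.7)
The plan is to obtain \eqref{eq:lem3} as a careful linear combination of the two algebraic identities \eqref{eq:lem1} and \eqref{eq:lem2} established in Lemmas \ref{lem:A2} and \ref{lem:app-2}, followed by explicit simplification of the resulting coefficients using the precise form of $p_\ep$. The key observation driving the choice of combination is that the same "bilinear" bracket
\[
\mathcal{J} \;:=\; \langle \nu,(u-\bu)^2(\rho-\brho)^2\rangle + \langle \nu,(\rho-\brho)^2\rangle\,\langle \nu,(u-\bu)^2\rangle
\]
appears with coefficient $A_3(\brho)$ on the left-hand side of \eqref{eq:lem1} and with coefficient $-B_3(\brho)$ on the left-hand side of \eqref{eq:lem2}. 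Consequently, the combination
\[
\eqref{eq:lem2}\; +\; \frac{B_3(\brho)}{A_3(\brho)}\,\eqref{eq:lem1}
\]
cancels $\mathcal{J}$ altogether, which is the only point at which mixed fourth-order averages enter.

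Next I would collect the surviving coefficients. The coefficient of $\langle \nu,(u-\bu)^4\rangle$ is $B_1 - (B_3/A_3)\brho^2 =: C_1(\brho)$; the coefficient of $\langle \nu,(\rho-\brho)^4\rangle$ is $B_2 + (B_3/A_3)A_1 =: C_2(\brho)$; and the coefficient of $\bigl(\langle \nu,(\rho-\brho)(u-\bu)\rangle\bigr)^2$ is $2B_3 =: C_3(\brho)$, exactly as claimed. The remaining work is then to verify that the closed-form expressions for $C_1, C_2, C_3$ stated in the Lemma are obtained by substituting
\[
p_\ep(\brho)=\ep\frac{\brho^\gamma}{(1-\brho)^\gamma},\qquad
p'_\ep(\brho)=\frac{\ep\gamma\,\brho^{\gamma-1}}{(1-\brho)^{\gamma+1}},\qquad
p''_\ep(\brho)=\frac{\ep\gamma\,\brho^{\gamma-2}(\gamma-1+2\brho)}{(1-\brho)^{\gamma+2}}
\]
into the definitions of $A_1,A_3,B_1,B_2,B_3$. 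In particular, in the expression for $C_1$ the factor $(3-\gamma)$ emerges after collecting common powers of $\ep$, $\brho$ and $(1-\brho)$, recovering the well-known threshold role of $\gamma=3$ highlighted in the polytropic analysis of Lu.

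The main obstacle is the treatment of the two residual "variance-type" terms
\[
\Bigl(\tfrac{B_3}{A_3}A_2 - B_2\Bigr)\bigl(\langle \nu,(\rho-\brho)^2\rangle\bigr)^2,\qquad
\Bigl(\tfrac{B_3}{A_3}A_4 - B_1\Bigr)\bigl(\langle \nu,(u-\bu)^2\rangle\bigr)^2,
\]
which are of order $4$ in the $(\rho-\brho,u-\bu)$ increments and therefore do \emph{not} fit the fifth-order definition of $\ER$ a priori. The plan to dispose of them is twofold: first, exhibit from the explicit formulae for $A_2,A_4,B_1,B_2,B_3$ that both prefactors reduce, modulo the factor $\brho p''_\ep - 2p'_\ep \propto (\gamma-3+\text{correction in }\brho)$, to expressions that vanish identically in the polytropic limit; second, for the singular pressure at hand, quantify the remainder by expanding one order further in the Taylor expansions of $\teta_i,\tq_i$ performed in Lemma \ref{lem:exp-eta-q} (which is legitimate since $(\rho,u)$ are $L^\infty$-bounded) and checking that these extra contributions can be grouped with $\ER$ in the generalized sense of the paper (namely "terms whose $L^\infty$-norm is negligible with respect to $C_1,C_2,C_3$"). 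This step, which is conceptually the heart of the proof, is where the specific hard-sphere structure and the restriction $\gamma\in(1,3]$ enter decisively.
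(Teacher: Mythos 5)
Your choice of combination is exactly the paper's: forming $\tfrac{B_3(\brho)}{A_3(\brho)}\times\eqref{eq:lem1}+\eqref{eq:lem2}$ to cancel the mixed bracket, and reading off $C_1=B_1-\tfrac{B_3\brho^2}{A_3}$, $C_2=B_2+\tfrac{B_3A_1}{A_3}$, $C_3=2B_3$, is correct and matches \eqref{eq:AiBi}. You also correctly identify the genuine difficulty, namely the two surviving terms $\bigl(\tfrac{B_3}{A_3}A_2-B_2\bigr)\bigl(<\nu,(\rho-\brho)^2>\bigr)^2$ and $\bigl(\tfrac{B_3}{A_3}A_4-B_1\bigr)\bigl(<\nu,(u-\bu)^2>\bigr)^2$.

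However, your plan for disposing of these terms does not work, and this is where the essential idea of the proof is missing. First, their prefactors do \emph{not} vanish, neither identically nor in the polytropic limit: with the hard-sphere law they evaluate (see \eqref{eq:HO}) to
\begin{equation*}
\ep^3\,\dfrac{3\gamma^2(\gamma-3+4\brho)}{2(\gamma+1)}\,\dfrac{\brho^{3\gamma-5}}{(1-\brho)^{3\gamma+2}}
\quad\text{and}\quad
\ep\,\dfrac{3(3-\gamma-4\brho)}{2(\gamma+1)}\,\dfrac{\brho^{\gamma+1}}{(1-\brho)^{\gamma}},
\end{equation*}
which are generically nonzero and of either sign (in Lu's polytropic case the analogous term is nonzero as well; it is absorbed there via $\bigl(<\nu,(\rho-\brho)^2>\bigr)^2\le\ <\nu,(\rho-\brho)^4>$, an absorption which precisely fails here because the coefficient of $<\nu,(u-\bu)^4>$ is only of size $\ep(3-\gamma)$). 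Second, these quantities are fourth-order moments of the same magnitude as $C_1<\nu,(u-\bu)^4>$ and $C_2<\nu,(\rho-\brho)^4>$; no further Taylor expansion of $\teta_i,\tq_i$ can turn them into fifth-order remainders, so they cannot be ``grouped with $\ER$''. The step the paper uses instead is one more application of the Div--Curl identities, this time to the pairs $(\teta_1,\tq_1)$ and $(\teta_2,\tq_2)$, which gives $<\nu,(u-\bu)^2>\ =\ \tfrac{p'_\ep(\brho)}{\brho^2}<\nu,(\rho-\brho)^2>+\ <\nu,O_3>$, hence \eqref{eq:u2}. Substituting this into the $\bigl(<\nu,(u-\bu)^2>\bigr)^2$ term converts its coefficient into $\ep^3\tfrac{3\gamma^2(3-\gamma-4\brho)}{2(\gamma+1)}\tfrac{\brho^{3\gamma-5}}{(1-\brho)^{3\gamma+2}}$, which cancels the $\bigl(<\nu,(\rho-\brho)^2>\bigr)^2$ coefficient exactly, leaving \eqref{eq:lem3}. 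Without this extra identity (or an equivalent mechanism producing the exact compensation), your argument has a genuine gap and the claimed sign structure of \eqref{eq:lem3} cannot be reached.
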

		
		\begin{proof}
			Let us now doing the combination
			\[
			\frac{B_3(\brho)}{A_3(\brho)} \times \eqref{eq:lem1} + \eqref{eq:lem2}
			\]
			which allows to get rid of the cross product terms
			\[
			<\nu,(u-\bu)^2> <\nu,(\rho-\brho)^2> \quad \text{and} \quad < \nu,(u-\bu)^2 (\rho-\brho)^2>.
			\]
			The resulting equation is then
			\begin{equation*}
			\begin{aligned}
			& \frac{B_3(\brho)}{A_3(\brho)} \Bigg(A_1(\brho) <\nu, (\rho-\brho)^4> + A_2(\brho) (<\nu, (\rho-\brho)^2>)^2\\
			& \hspace{1.5cm} + A_4(\brho) (<\nu, (u-\bu)^2>)^2 - \brho^2 <\nu, (u-\bu)^4>\Bigg)\\
			& + B_1(\brho) \Big( <\nu, (u-\bu)^4> - (<\nu, (u-\bu)^2>)^2 \Big) \\
			& + B_2(\brho) \Big( < \nu, (\rho-\brho)^4> - (<\nu, (\rho-\brho)^2>)^2 \Big) \\
			&  + 2B_3(\brho) (<\nu, (\rho-\brho) (u-\bu)>)^2 + \ \ER
			= 0 .
			\end{aligned}
			\end{equation*}			
			which rewrites as
			\begin{align}\label{eq:AiBi}
			& \left[\frac{B_3(\brho) A_1(\brho)}{A_3(\brho)} + B_2(\brho) \right]<\nu, (\rho-\brho)^4>
			+ \left[B_1(\brho) - \frac{B_3(\brho) \brho^2}{A_3(\brho)}\right]<\nu, (u-\bu)^4> \nonumber \\
			& + \left[\frac{B_3(\brho) A_2(\brho)}{A_3(\brho)} + B_2(\brho) \right] (<\nu, (\rho-\brho)^2>)^2
			+ \left[\frac{B_3(\brho) A_4(\brho)}{A_3(\brho)} - B_1(\brho) \right] (<\nu, (u-\bu)^2>)^2  \nonumber \\
			&  + 2B_3(\brho) (<\nu, (\rho-\brho) (u-\bu)>)^2 + \ \ER
			= 0 .
			\end{align}

			From now on, we replace the coefficients $A_i, B_i,$ and use the explicit definition \eqref{df:pressure} of the pressure $p_\ep$:
			\begin{align} \label{eq:HO}
			&  \ep^3 \dfrac{\gamma^2(5\gamma +1)}{2(\gamma+1)}  \dfrac{\brho^{3\gamma-5}}{(1-\brho)^{3\gamma+2}} < \nu, (\rho -\brho)^4> 
			+ \ep \dfrac{3-\gamma}{2(\gamma+1)} \dfrac{\brho^{\gamma+1}}{(1-\brho)^\gamma} < \nu, (u -\bu)^4 > \nonumber \\
			& + \ep \dfrac{3(3-\gamma - 4 \brho)}{2(\gamma+1)} \dfrac{\brho^{\gamma +1}}{(1-\brho)^\gamma} \big(< \nu, (u-\bu)^2>\big)^2 \nonumber  \\
			&  
			+ \ep^3 \dfrac{3\gamma^2(\gamma-3 + 4\brho)}{2(\gamma+1)}  \dfrac{\brho^{3\gamma-5}}{(1-\brho)^{3\gamma+2}} \big(< \nu, (\rho-\brho)^2>\big)^2 \nonumber \\
			& + \ep^2 \ 6 \gamma  \dfrac{\brho^{2(\gamma-1)}}{(1-\brho)^{2\gamma +1}} \big(< \nu, (u-\bu) (\rho -\brho)> \big)^2  + \ \ER
			= 0.
			\end{align}
			For $\gamma \in (1,3)$, we observe that the terms that are problematic are the terms involving 
			\[
			(< \nu, (u-\bu)^2>)^2\quad \text{and} \quad (< \nu, (\rho-\brho)^2>)^2
			\]
			whose multiplicative coefficients may be negative.
			This is an important novelty compared to the isentropic case $p(\rho)= \kappa \rho^\gamma$ treated by Lu in \cite{lu2002} (Section 8.4, Eq (8.4.32)).
			In that latter case, the only negative term was the one involving $(< \nu, (\rho-\brho)^2>)^2$, but the coefficients were such that it was possible to absorb this negative contribution using the inequality
			\[
			(< \nu, (\rho-\brho)^2>)^2 \leq \ < \nu, (\rho-\brho)^4>.
			\]
			In our case, the coefficient in front of $ < \nu, (u-\bu)^4>$ is not large enough to absorb the contribution coming from $(< \nu, (u-\bu)^2>)^2$. 
			Indeed, we have
			\[
			\ep \dfrac{3-\gamma}{2(\gamma+1)} \dfrac{\brho^{\gamma+1}}{(1-\brho)^\gamma}
			+ \ep \dfrac{3(3-\gamma - 4 \brho)}{2(\gamma+1)} \dfrac{\brho^{\gamma +1}}{(1-\brho)^\gamma}
			= \ep \dfrac{2}{\gamma+1}(3-\gamma -3\brho) \dfrac{\brho^{\gamma+1}}{(1-\brho)^\gamma}
			\]
			which may be negative for large values of $\brho \in [0,1]$.
			We overcome this difficulty by using again an identity provided by Div-Curl Lemma which shows that the two problematic terms in $(< \nu, (u-\bu)^2>)^2$ and $(< \nu, (\rho-\brho)^2>)^2$ actually compensate each other.
			The Div-Curl Lemma applied to the pairs $(\teta_1,\tq_1)$ and $(\teta_2,\tq_2)$ yields indeed
			\[
			<\nu , \teta_1 \tq_2 - \teta_2 \tq_1>
			= <\nu,\teta_1> <\nu,\tq_2> - <\nu,\teta_2><\nu,\tq_1>
			\]
			which rewrites, since $<\nu,\teta_1> = <\nu, \tq_1> =0$, as
			\[
			0 
			= <\nu , \teta_1 \tq_2 - \teta_2 \tq_1>
			= <\nu,(p_\ep(\rho)-p_\ep(\brho))(\rho-\brho) - (u-\bu)\brho \rho >.
			\]
			Performing a Taylor expansion, we deduce that
			\begin{align*}
			<\nu,(u-\bu)^2> 
			& = \dfrac{p'_\ep(\brho)}{\brho^2} <\nu,(\rho-\brho)^2> + \ <\nu,O_3> \\
			& = \ep \gamma \dfrac{\brho^{\gamma-3}}{(1-\brho)^{\gamma+1}} <\nu,(\rho-\brho)^2> +  \ <\nu,O_3>
			\end{align*}
			and therefore
			\begin{equation}\label{eq:u2}
			(<\nu,(u-\bu)^2>)^2 
			= \ep^2 \gamma^2 \dfrac{\brho^{2(\gamma-3)}}{(1-\brho)^{2(\gamma+1)}} (<\nu,(\rho-\brho)^2>)^2 + \ER .
			\end{equation}
			Hence
			\begin{align*}
			& \ep \dfrac{3(3-\gamma - 4 \brho)}{2(\gamma+1)} \dfrac{\brho^{\gamma +1}}{(1-\brho)^\gamma} (<\nu,(u-\bu)^2>)^2 \\
			& = \ep^3 \dfrac{3\gamma^2}{2(\gamma+1)} (3-\gamma-4\brho) \dfrac{\brho^{3\gamma-5}}{(1-\brho)^{3\gamma+2}} (<\nu,(\rho-\brho)^2>)^2 + \ER .
			\end{align*}
			and equation \eqref{eq:HO} finally simplifies as 
			\begin{align}
			& \ep \dfrac{3-\gamma}{2(\gamma+1)} \dfrac{\brho^{\gamma+1}}{(1-\brho)^\gamma} < \nu, (u -\bu)^4 >
			\nonumber  \\
			&  + \ep^3 \dfrac{\gamma^2(5\gamma +1)}{2(\gamma+1)}  \dfrac{\brho^{3\gamma-5}}{(1-\brho)^{3\gamma+2}} < \nu, (\rho -\brho)^4 >  \nonumber \\
			& + \ep^2 \ 6 \gamma  \dfrac{\brho^{2(\gamma-1)}}{(1-\brho)^{2\gamma +1}} \big(< \nu, (u-\bu) (\rho -\brho)> \big)^2  + \ \ER
			= 0
			\end{align}
			which achieves the proof.
		\end{proof}

\subsection{Incompressibility and congestion constraint}	
This last appendix section is devoted to the proof of the following lemma.
\begin{lemma}
Let $T>0$ and $(v,u) \in W^{1, \infty}((0,T) \times \R) \times L^\infty((0,T); W^{1,\infty}(\R))$ satisfying
\begin{equation}\label{eq:mass-lim}
\partial_t v = \partial_x u , \quad v_{t=0} = v^0 \quad a.e.
\end{equation}
The following two assertions are equivalent:
\begin{enumerate}
    \item $v(t,x) \geq 1$ for all $(t,x) \in [0,T] \times \R$;
    \item $\partial_x u = 0$ a.e. on $\{v \leq 1\}$ and $v^0 \geq 1$.
\end{enumerate}
\end{lemma}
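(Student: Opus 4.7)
The plan is to use the standard calculus rules for Sobolev functions (Stampacchia's lemma on vanishing of derivatives on level sets, and the chain rule applied to a truncation) together with the mass equation $\partial_t v = \partial_x u$ to transfer information between $v$ and $u$. Since $v \in W^{1,\infty}((0,T)\times\R)$, it admits a continuous representative, so pointwise statements about $v$ make sense.

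For the direction $(1) \Rightarrow (2)$, I would first note that $v^0 \geq 1$ is immediate by continuity from $v(t,x) \geq 1$ everywhere. Under this hypothesis $\{v \le 1\} = \{v = 1\}$, and Stampacchia's lemma applied to $v \in W^{1,\infty}$ gives $\nabla_{t,x} v = 0$ a.e.\ on $\{v=1\}$. In particular $\partial_t v = 0$ a.e.\ on $\{v \le 1\}$, and the mass equation \eqref{eq:mass-lim} then yields $\partial_x u = \partial_t v = 0$ a.e.\ on $\{v \le 1\}$.

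For the reverse direction $(2) \Rightarrow (1)$, I would introduce the nonnegative truncation
\[
w(t,x) := (1 - v(t,x))_+ = \max\bigl(1 - v(t,x), \, 0\bigr),
\]
which belongs to $W^{1,\infty}((0,T)\times\R)$ as the composition of a Lipschitz function with $v$. By the chain rule in Sobolev spaces,
\[
\partial_t w = - \mathbf{1}_{\{v < 1\}} \, \partial_t v = - \mathbf{1}_{\{v < 1\}} \, \partial_x u \quad \text{a.e.}
\]
Since $\{v < 1\} \subset \{v \le 1\}$, the assumption $\partial_x u = 0$ a.e.\ on $\{v \le 1\}$ forces $\partial_t w = 0$ a.e. Thus $w$ is independent of $t$ (as a Lipschitz function), so $w(t,x) = w(0,x) = (1 - v^0(x))_+ = 0$ since $v^0 \ge 1$. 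Consequently $v(t,x) \ge 1$ for all $(t,x) \in [0,T] \times \R$, using the continuous representative of $v$.

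The only delicate points are the two standard facts invoked above, namely Stampacchia's lemma (derivatives vanish on level sets) and the chain rule for the positive part in $W^{1,\infty}$; both hold without restriction in our setting and can be cited from any reference on Sobolev spaces, so no real obstacle is expected.
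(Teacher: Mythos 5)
Your proposal is correct. The direction $2 \Rightarrow 1$ is essentially the paper's argument: the paper works with $d=v-1$ and multiplies the equation by $b_\delta'(d)$, where $b_\delta$ is an explicit quadratic regularization of $d \mapsto [d]_-$, then lets $\delta \to 0$ to get $\partial_t [v-1]_- = 0$ a.e.\ and concludes from $v^0 \geq 1$; your use of $w=(1-v)_+$ with the Sobolev chain rule is the same mechanism, just with the regularization step delegated to the standard chain-rule statement. The direction $1 \Rightarrow 2$ is where you genuinely diverge: the paper tests the equation with $-k v^{-(k+1)}$, observes that $(\partial_t v^{-k})_k$ is bounded in $W^{-1,\infty}((0,T);L^\infty(\R))$ because $v \geq 1$, and identifies the distributional limit of $v^{-(k+1)}\partial_x u$ as $k \to \infty$ with $\mathbf{1}_{\{v=1\}}\partial_x u$, whereas you simply note that under hypothesis 1 one has $\{v\le 1\}=\{v=1\}$, invoke the level-set (Stampacchia) lemma for $v \in W^{1,\infty}((0,T)\times\R)$ to get $\partial_t v = 0$ a.e.\ on that set, and read off $\partial_x u = \partial_t v = 0$ there from the equation. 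Your route is shorter and avoids the weak-convergence and limit-identification step; it rests on the same standard fact (vanishing of Sobolev derivatives on level sets) that also underlies the chain-rule step, so nothing is lost in rigor, while the paper's penalization-by-powers argument is more self-contained and closer in spirit to the compactness manipulations used elsewhere in the article.
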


\medskip
\begin{proof}
\begin{itemize}
    \item $1 \implies 2$:
    Let us assume that $v(t,x) \geq 1$ for all $(t,x) \in [0,T] \times \R$ and introduce $b(v) = v^{-k}$, $k \in \mathbb{N}^*$.
    Multiplying Equation~\eqref{eq:mass-lim} by $b'(v) = -k v^{-(k+1)}$, we get the equation
    \[
    \partial_t v^{-k} = -k v^{-(k+1)}\partial_x u \quad \text{a.e.}.
    \]
    Now, since $v\geq 1$, the sequence $(v^{-k})_{k \in \mathbb{N}^*}$ is bounded in $L^\infty((0,T) \times \R)$, so that $(\partial_t v^{-k})_{k \in \mathbb{N}^*}$ is bounded in $W^{-1,\infty}((0,T); L^\infty(\R))$.
    As a consequence, $\big(-kv^{-(k+1)}\partial_x u\big)_{k \in \mathbb{N}^*}$ is bounded in $W^{-1,\infty}((0,T); L^\infty(\R))$ and, as $k\to +\infty$, we get that
    \[
    v^{-(k+1)}\partial_x u \rightharpoonup 0 \quad \text{in} ~~ \mathcal{D}'.
    \]
    Since, on the other hand,
    \[
    v^{-(k+1)}\partial_x u \rightarrow \mathbf{1}_{\{v=1\}} \partial_x u \quad \text{a.e.},
    \]    
    we deduce that
    \[
    \mathbf{1}_{\{v=1\}} \partial_x u = 0 \quad \text{a.e.}
    \]

    \item $2 \implies 1$: 
    Let $d = v-1$ which satisfies the equation
    \[
    \partial_t d = \partial_x u.
    \]
    Multiply now this equation by $b'_\delta (d)$ where
    \[
    b_\delta(d) = \begin{cases}
    \ [d]_- \quad & \text{if} \quad |d| > \delta \\
    \ \dfrac{1}{4\delta} (d-\delta)^2 \quad & \text{if} \quad |d|\leq \delta
    \end{cases}
    \]
    is a regularization (around $0$) of the function $b :  d \mapsto [d]_- = \max(0, -d)$, 
    \[
    \partial_t b_\delta(d) = b'_\delta(d) \partial_x u \quad \text{a.e.}.
    \]
    As $\delta \to 0$, we can pass to the limit in the above equation and get
    \[
    \partial_t b(d) = B(d) \partial_x u \quad \text{a.e. with} \quad  B(d) =
    \begin{cases}
    -1 \quad & \text{if} \quad d < 0 \\
    -\frac{1}{2} \quad & \text{if} \quad d = 0 \\
    0 \quad & \text{if} \quad d > 0
    \end{cases}
    \]
    Using our assumption on $\partial_x u$, we infer that
    \[
    \partial_t b(d) = 0 \quad \text{a.e.}
    \]
    with initially $b(d)_{|t=0} = 0$ thanks to our assumption on $v^0$.
    Hence, 
    \[
    b(d) = 0 \quad \text{a.e.}~ (t,x), ~\text{i.e.} \quad d = v-1 \geq 0  \quad \text{a.e.}~ (t,x).
    \]
\end{itemize}
\end{proof}
		
\section*{Acknowledgement}
This project has received funding from the European Research Council (ERC) under the European Union's Horizon 2020 research and innovation program Grant agreement No 637653, project BLOC ``Mathematical Study of Boundary Layers in Oceanic Motion’’. This work was  supported by the SingFlows project, grant ANR-18-CE40-0027 of the French National Research Agency (ANR). RB was partially supported by the GNAMPA group of INdAM (GNAMPA project 2019).
The authors thank Roberto Natalini for an introduction on compensated compactness.

\bibliography{biblio_final}	
\end{document}